\documentclass[11pt]{article}

\usepackage[margin=1in]{geometry}
\usepackage{amsmath,amssymb,amsthm}
\usepackage{array}
\usepackage{booktabs}
\usepackage{graphicx}
\usepackage{xurl}
\usepackage[hidelinks]{hyperref}
\usepackage{cleveref}
\usepackage{microtype}

\theoremstyle{plain}
\newtheorem{theorem}{Theorem}[section]
\newtheorem{lemma}[theorem]{Lemma}
\newtheorem{proposition}[theorem]{Proposition}
\newtheorem{corollary}[theorem]{Corollary}
\theoremstyle{definition}
\newtheorem{definition}[theorem]{Definition}
\newtheorem{problem}[theorem]{Open Problem}
\theoremstyle{remark}

\newcommand{\Rcop}{R_{\mathrm{cop}}}
\newcommand{\Redge}{R_{\mathrm{cop}}^{\mathrm{edge}}}
\newcommand{\Rcl}{R_{\mathrm{cl}}}

\title{Prime Certificates for Exact Vertex-Coprime Ramsey Numbers}
\author{Zhicheng Du\textsuperscript{1} \quad
Wenji Xi\textsuperscript{2} \quad
Zhuo Deng\textsuperscript{1} \quad
Lan Ma\textsuperscript{1}\\[0.5em]
\small \textsuperscript{1}\,Tsinghua Shenzhen International Graduate School,
Tsinghua University\\
\small \textsuperscript{2}\,School of Electrical and Electronic Engineering,
The University of Sheffield}
\date{May 25, 2026}

\begin{document}
\maketitle

\begin{abstract}
Let $G_n$ be the coprime graph on $\{1,\ldots,n\}$.  We prove that the mixed
vertex-coloring coprime Ramsey number satisfies
\[
  \Rcop(k_1,\ldots,k_c)=p_{\sum_{i=1}^c(k_i-1)},
\]
where $p_m$ is the $m$-th prime.  The proof is elementary: the prime clique
$\{1\}\cup\{p\le n:p\text{ prime}\}$ gives the upper bound by pigeonhole,
while a prime-bin partition gives the matching lower bound by coloring each
composite with a bin containing one of its prime divisors.  We reserve
$\Rcop$ for this vertex-coloring parameter; the edge-coloring parameter on
the same host graph is denoted $\Redge$.  The same certificate viewpoint
yields several extensions, including a support-disjointness generalization, a
polynomial-time certificate-extraction primitive, and an exact reduction of
the edge-coloring variant to classical Ramsey numbers:
$\Redge(k_1,\ldots,k_c)=p_{\Rcl(k_1,\ldots,k_c)-1}$.  These two formulas are
rank transfers from the same clique-label certificate.  We also prove that
the balanced two-color diagonal threshold equals the unrestricted threshold
$p_{2k-2}$ for all $k\ge2$, via a deterministic prime-bin split requiring
only the weak inequality $2p_m<p_{2m}<3p_m$; for fixed $c$, a Hall argument
plus a standard Selberg--Delange estimate gives eventual multicolor balanced
certificates.
\end{abstract}

\noindent\textbf{2020 Mathematics Subject Classification.}
05D10, 05C55, 05C15, 11A41.

\medskip
\noindent\textbf{Keywords.}
Coprime graph; Ramsey number; vertex coloring; pairwise coprime integers;
support-disjointness graph; clique-label certificate; prime certificate.

\section{Introduction}

The coprime graph $G_n$ has vertex set $\{1,\ldots,n\}$, with an edge between
two integers exactly when they are coprime.  This paper studies a
vertex-coloring Ramsey problem on this arithmetic host graph: given
$k_1,\ldots,k_c\ge2$, how large must $n$ be before every $c$-coloring of
$[n]$ contains, for some color $i$, a set of $k_i$ pairwise coprime integers
in that color?  We prove the exact formula
\[
  \Rcop(k_1,\ldots,k_c)=p_{\sum_{i=1}^c(k_i-1)}.
\]

The formula is much simpler than one might expect from a direct Ramsey
encoding.  Already for the diagonal case $k=10$, a direct two-color Boolean
satisfiability (SAT) encoding at the threshold has $12{,}474{,}430$ clauses,
while the proof below uses the $19$-vertex prime clique and a matching
prime-bin coloring.
It also rigorously explains prior computational observations of edge-coloring
coprime Ramsey values by identifying the exact classical Ramsey-number
reduction behind them.
Classical edge Ramsey problems on complete graphs remain
asymptotically difficult
\cite{ramsey1930,erdos-szekeres1935,campos2023,campos-r3k2025,
ma-shen-xie2025,hunter-milojevic-sudakov2025,campos-pohoata2026,
attwa-lopez-morris2026,nagda2026,przybocki2026}.  Here the host graph is not
pseudorandom: its large cliques and its extremal colorings are both controlled
by prime divisors.  This arithmetic rigidity is the reason the search
collapses: the obstruction is not a hidden random-like configuration, but the
explicit prime clique
$\{1\}\cup\{p\le n:p\text{ prime}\}$, which gives the upper bound.  The matching
lower bound is the nontrivial direction: below the threshold, every composite
must be colored while keeping each color class below its pairwise-coprime
capacity.  A partition of the available primes into bins does exactly this by
turning prime divisors into injective witnesses.
For instance, the coprime analogue of the classical off-diagonal quantity
$R(3,k)$ is only $p_{k+1}\sim k\log k$, whereas the classical value is
$\Theta(k^2/\log k)$.

The same certificate viewpoint yields several structural extensions and a
boundary theory.  First, the argument extends to support-disjointness graphs
and yields a polynomial-time primitive that recognizes when the certificate
applies.  This is the methodological part of the paper: before invoking a
large search formulation, one can ask whether the host graph is already
explained by disjoint supports over a small set of atoms.  Second, the
edge-coloring variant on the same coprime graph is
not a new arithmetic Ramsey table; it is the classical edge Ramsey table
viewed through prime indices:
\[
  \Redge(k_1,\ldots,k_c)=p_{\Rcl(k_1,\ldots,k_c)-1}.
\]
This notation is deliberate: earlier public computational work used
$R_{\mathrm{cop}}$ for the edge-coloring parameter
\cite{towell-blog,towell-github}; \Cref{thm:edge-reduction} shows that those
values are classical Ramsey numbers viewed through prime indices, not a
separate arithmetic phenomenon.  The main theorem below concerns vertex
colorings of the integers themselves.
The point-edge unification is therefore structural rather than numerical:
the same prime-clique labels control both problems, but vertex colorings
transfer to a pigeonhole threshold while edge colorings transfer to the
classical Ramsey threshold.
\Cref{tab:vertex-edge-comparison} summarizes this distinction before the
detailed related-work table.
This is the prime-index transform principle in its simplest form: once a
Ramsey-type condition is governed by the rank $r=\pi(n)+1$ of the prime-label
clique, the integer threshold is obtained by solving the corresponding rank
problem and then applying $r\mapsto p_{r-1}$.  For vertex colorings the rank
problem is independent bin packing; for edge colorings it is exactly the
classical complete-graph Ramsey problem.
Third, the canonical prime-bin coloring can be highly unbalanced, but the
balanced two-color diagonal endpoint is still exact:
$L_{\mathrm{bal}}(k;2)=p_{2k-2}-1$ for every $k\ge2$.  For every fixed number
of colors, the corresponding multicolor balanced endpoint is also eventually
exact, although small multicolor endpoints can fail.

The result should be read with this scope.  We do not improve classical edge
Ramsey bounds and we do not propose a general SAT-based or artificial
intelligence (AI)-assisted Ramsey solver.  The point is narrower: for this
arithmetic host graph, a large search instance collapses to a prime
certificate.  Boundary
variants such as multicolor balance and shifted intervals then show where
that certificate stops being automatic.
\Cref{fig:certificate-schematic} gives the visual roadmap used throughout
the paper: search first exposes supports, and supports then produce the two
certificates in the proof.

\begin{figure}[htbp]
\centering
\includegraphics[width=0.98\linewidth]{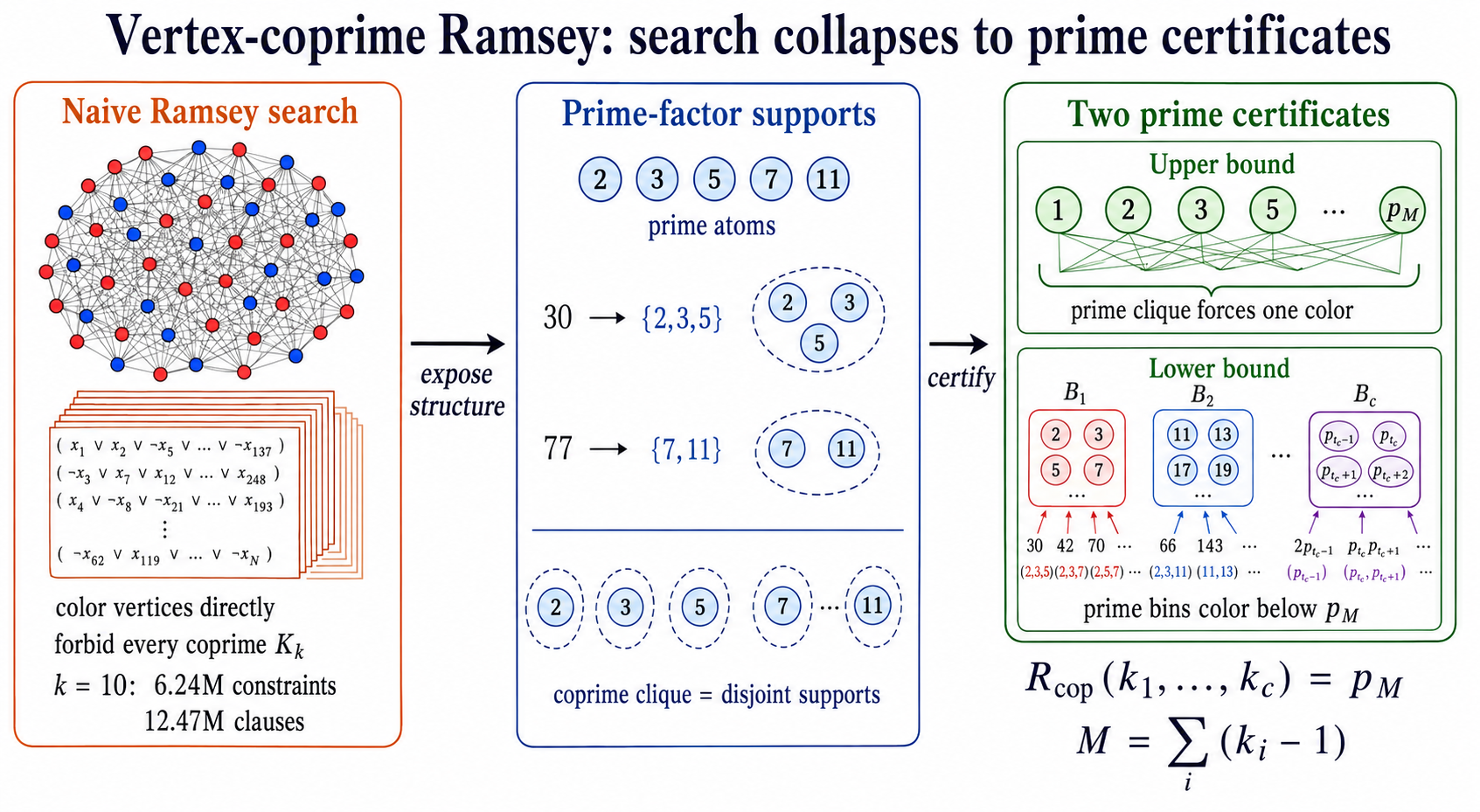}
\caption{The central story of the paper.  A direct Ramsey encoding produces a
large search instance, but prime-factor supports expose the two schematic
certificates that determine the exact threshold: the prime clique for the
upper bound and prime bins for the lower-bound coloring.  The two certificates
are proved in \Cref{thm:main}.}
\label{fig:certificate-schematic}
\end{figure}

\paragraph{Organization.}
After definitions, \Cref{sec:prime-bin} proves the exact mixed formula.
\Cref{sec:extensions} gives the support-disjointness framework, the
certificate primitive, and the clique-label transfer that unifies the vertex
formula with the edge-coloring reduction.
\Cref{sec:computational-context} records finite values and the SAT-scale
diagnostic.  \Cref{sec:balanced} proves the exact balanced two-color endpoint,
and \Cref{sec:boundary} gives the eventual multicolor balanced theorem and
the shifted-interval boundary analysis.
Related work, open problems, and appendices follow.

\section{Definitions}

\begin{definition}[Coprime graph]
For $n\ge 1$, the coprime graph $G_n$ has vertex set $[n]=\{1,\ldots,n\}$.
Distinct vertices $a,b$ are adjacent if and only if $\gcd(a,b)=1$.
\end{definition}

\begin{definition}[Mixed vertex-coprime Ramsey number]
Let $k_1,\ldots,k_c\ge 2$.  The mixed vertex-coprime Ramsey number
$\Rcop(k_1,\ldots,k_c)$ is the least $n$ such that every coloring
\[
  \chi:[n]\to\{1,\ldots,c\}
\]
has some color $i$ whose color class contains a $k_i$-clique of $G_n$.
Equivalently, that color class contains $k_i$ pairwise coprime integers.
\end{definition}

The diagonal notation $\Rcop(k;c)$ means
$\Rcop(\underbrace{k,\ldots,k}_{c\text{ times}})$.  The usual two-color
diagonal notation is $\Rcop(k)=\Rcop(k;2)$; the two-color off-diagonal
notation is $\Rcop(s,t)$.

\section{The Prime-Bin Theorem}
\label{sec:prime-bin}

\begin{lemma}[Clique size in the coprime graph]
\label{lem:max-clique}
The clique number of $G_n$ is $\omega(G_n)=\pi(n)+1$, where $\pi(n)$ counts
the primes at most $n$.
\end{lemma}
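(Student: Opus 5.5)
We prove the two inequalities $\omega(G_n)\ge \pi(n)+1$ and $\omega(G_n)\le \pi(n)+1$ separately. Both rest on one idea: a pairwise coprime set of integers $>1$ can be labeled injectively by primes.

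For the lower bound we exhibit the prime clique $\{1\}\cup\{p\le n: p \text{ prime}\}$. It has $\pi(n)+1$ elements, all in $[n]$. Distinct primes are coprime, and $\gcd(1,a)=1$ for every $a$, so every pair of vertices in this set is adjacent in $G_n$. This holds for every $n\ge1$; when $n=1$ the set is $\{1\}$ and $\pi(1)=0$.

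For the upper bound, let $S\subseteq[n]$ be any clique of $G_n$. At most one element of $S$ equals $1$. Every other element $a\in S$ satisfies $a\ge2$, so it has a smallest prime divisor $q(a)$, and $q(a)\le a\le n$. We claim the map $a\mapsto q(a)$ on $S\setminus\{1\}$ is injective. Indeed, if $q(a)=q(b)$ for distinct $a,b\in S$, then this prime divides $\gcd(a,b)$, contradicting adjacency. Hence $|S\setminus\{1\}|\le \pi(n)$, and so $|S|\le\pi(n)+1$.

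Combining the two bounds gives $\omega(G_n)=\pi(n)+1$. There is no real obstacle here. The only point needing care is that the injective witness must be a prime \emph{at most $n$}, which holds because every prime divisor of $a\le n$ is itself at most $n$. This injective-witness map is the same device the later prime-bin arguments reuse, so I would state it explicitly in the proof.
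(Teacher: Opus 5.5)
Your proof is correct and follows the paper's argument: the prime clique $\{1\}\cup\{p\le n: p\text{ prime}\}$ gives the lower bound, and a chosen prime divisor of each non-one clique vertex injects $S\setminus\{1\}$ into the primes at most $n$, giving the upper bound. The only difference is that you fix the smallest prime divisor as the witness, whereas the paper allows any choice; this is immaterial.
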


\begin{proof}
The set $\{1\}\cup\{p\le n:p\text{ prime}\}$ is a clique, so
$\omega(G_n)\ge \pi(n)+1$.
Conversely, in any clique of $G_n$, each vertex other than $1$ has at least
one prime divisor, and two distinct non-one vertices in the clique cannot
share a prime divisor.  Choosing one prime divisor from each non-one vertex
therefore injects the non-one vertices into the set of primes at most $n$.
Thus a clique has at most $\pi(n)$ vertices different from $1$, plus possibly
the vertex $1$.
\end{proof}

\begin{theorem}[Exact mixed formula]
\label{thm:main}
For all $c\ge 1$ and $k_1,\ldots,k_c\ge 2$,
\[
  \Rcop(k_1,\ldots,k_c)=p_M,
  \qquad
  M=\sum_{i=1}^c(k_i-1).
\]
\end{theorem}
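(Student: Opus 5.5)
We prove the two inequalities $\Rcop(k_1,\ldots,k_c)\le p_M$ and $\Rcop(k_1,\ldots,k_c)>p_M-1$ separately. Throughout, \Cref{lem:max-clique} is the basic tool: cliques of $G_n$ are controlled by choosing one prime divisor per non-one vertex.

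\emph{Upper bound.} Take $n=p_M$ and an arbitrary coloring $\chi:[n]\to\{1,\ldots,c\}$. Restrict $\chi$ to the prime clique $Q=\{1\}\cup\{p\le n: p \text{ prime}\}$. This set has $\pi(p_M)+1=M+1$ elements, and any two of them are coprime. Suppose no color $i$ contains $k_i$ elements of $Q$. Then color $i$ contains at most $k_i-1$ of them, so
\[
|Q|\le \sum_i (k_i-1)=M,
\]
which is a contradiction. Hence some color class contains $k_i$ pairwise coprime integers. The property is monotone in $n$, since a coloring of $[n']$ with $n'\ge n$ restricts to one of $[n]$. So $\Rcop\le p_M$.

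\emph{Lower bound.} Let $n=p_M-1$. The primes at most $n$ are $p_1,\ldots,p_{M-1}$. Partition them into bins $B_1,\ldots,B_c$ with $|B_1|=k_1-2$ and $|B_i|=k_i-1$ for $i\ge2$; the sizes sum to $M-1$, as required. Color $1$ with color $1$. Color each $m\in\{2,\ldots,n\}$ as follows: let $q$ be its smallest prime factor, which satisfies $q\le n$ and so lies in a unique bin $B_i$, and set $\chi(m)=i$.

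It remains to bound the cliques in each color class. Let $C$ be a pairwise coprime set of color $i$. Every non-one element of $C$ has its chosen prime in $B_i$. Distinct elements of $C$ are coprime, so their chosen primes are distinct, giving an injection of $C\setminus\{1\}$ into $B_i$. For $i\ge2$ the vertex $1$ is not in color $i$, so $|C|\le k_i-1$. For $i=1$ we get $|C|\le 1+(k_1-2)=k_1-1$. Thus no color $i$ contains a $k_i$-clique, and $\Rcop>p_M-1$.

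The main obstacle is the lower-bound bookkeeping: the vertex $1$ must be charged to one bin, that bin must shrink by one, and this requires $k_1\ge2$ so that $|B_1|\ge0$. The edge case $M=1$ (that is, $c=1$ and $k_1=2$) also needs a check. There $n=p_1-1=1$, and $[1]$ trivially has no $2$-clique, consistent with the convention $\Rcop(2)=2=p_1$.
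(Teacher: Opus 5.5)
Your proof is correct and follows the paper's argument. For the upper bound you use pigeonhole on the $(M+1)$-vertex prime clique. For the lower bound you use a prime-bin coloring with capacity $k_1-2$ for the bin carrying vertex $1$ and $k_i-1$ for the other bins, together with the witness-prime injection. The differences are cosmetic. You build the avoiding coloring only at $n=p_M-1$ with exact bin sizes and reach smaller $n$ by restriction, whereas the paper runs the same construction directly for every $n<p_M$ with the capacities as upper bounds. As a result, your monotonicity remark is what the lower bound relies on, so it belongs in that paragraph rather than in the upper-bound one.
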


\begin{proof}
First let $n=p_M$.  The prime clique
\[
  P_n=\{1\}\cup\{p\le n:p\text{ prime}\}
\]
has $M+1$ vertices.  In any $c$-coloring, if color $i$ had at most
$k_i-1$ vertices of $P_n$ for every $i$, then $P_n$ would contain at most
$\sum_i(k_i-1)=M$ vertices, a contradiction.  Hence some color $i$ contains
at least $k_i$ vertices of $P_n$, and these form a monochromatic
$k_i$-clique.  Therefore $\Rcop(k_1,\ldots,k_c)\le p_M$.

Now let $n<p_M$.  Then $\pi(n)\le M-1$.  Partition the primes at most $n$
into bins $B_1,\ldots,B_c$ with capacities
\[
  |B_1|\le k_1-2,\qquad |B_i|\le k_i-1\quad (2\le i\le c).
\]
This is possible because the total capacity is
\[
  (k_1-2)+\sum_{i=2}^c(k_i-1)=M-1.
\]
Color vertex $1$ with color $1$.  For every integer $m>1$, choose one prime
divisor $q(m)$ of $m$ and color $m$ by a bin containing $q(m)$.

Consider a monochromatic clique in color $i\ge 2$.  The chosen witness
primes $q(m)$ for its vertices are all distinct, since two pairwise coprime
integers cannot share a prime divisor.  Hence the clique has size at most
$|B_i|\le k_i-1$.  For color $1$, the same argument bounds the non-one
vertices by $|B_1|\le k_1-2$, and vertex $1$ can add at most one more
vertex, so the clique size is at most $k_1-1$.  Thus this coloring avoids
all forbidden monochromatic cliques, proving
$\Rcop(k_1,\ldots,k_c)>n$ for every $n<p_M$.
\end{proof}

\begin{corollary}[Diagonal and off-diagonal forms]
\label{cor:forms}
For $k,c\ge 2$,
\[
  \Rcop(k;c)=p_{c(k-1)}.
\]
For two colors with asymmetric demands $s,t\ge 2$,
\[
  \Rcop(s,t)=p_{s+t-2}.
\]
\end{corollary}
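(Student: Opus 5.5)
This corollary is a direct specialization of \Cref{thm:main}, so the plan is simply to compute the index $M=\sum_{i=1}^c(k_i-1)$ in the two requested cases and substitute it into the formula $\Rcop(k_1,\ldots,k_c)=p_M$. No new combinatorial argument is needed. The hypotheses $k,c\ge2$ and $s,t\ge2$ lie inside the range $c\ge1$, $k_i\ge2$ covered by the theorem.

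For the diagonal form, I take $k_1=\cdots=k_c=k$. Then each of the $c$ summands equals $k-1$, so $M=c(k-1)$. By the definition of the diagonal notation $\Rcop(k;c)=\Rcop(k,\ldots,k)$, \Cref{thm:main} gives $\Rcop(k;c)=p_{c(k-1)}$.

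For the two-color off-diagonal form, I take $c=2$, $k_1=s$ and $k_2=t$. Then $M=(s-1)+(t-1)=s+t-2$, and \Cref{thm:main} yields $\Rcop(s,t)=p_{s+t-2}$.

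There is no genuine obstacle here. The only point to check is that the notations $\Rcop(k;c)$ and $\Rcop(s,t)$ match the conventions fixed after the definition. In particular, $\Rcop(s,t)$ is the mixed number with color $1$ demanding $s$ and color $2$ demanding $t$. Since $M$ is symmetric in the $k_i$, the order of the arguments does not matter.
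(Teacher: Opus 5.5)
Your proposal is correct and matches the paper, which gives no separate proof and treats this corollary as the direct specialization of \Cref{thm:main}. Substituting $k_1=\cdots=k_c=k$ gives $M=c(k-1)$, and substituting $c=2$ with $(k_1,k_2)=(s,t)$ gives $M=s+t-2$, exactly as you do.
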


\begin{corollary}[Covering version]
\label{cor:covering}
Let $C_{\mathrm{cop}}(k_1,\ldots,k_c)$ be the least $n$ such that there is no
cover
\[
  [n]=A_1\cup\cdots\cup A_c
\]
with each $A_i$ containing no $k_i$ pairwise coprime integers.  Then
\[
  C_{\mathrm{cop}}(k_1,\ldots,k_c)=p_{\sum_i(k_i-1)}.
\]
\end{corollary}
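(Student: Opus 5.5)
The plan is to reduce the covering version to \Cref{thm:main} by showing that, for each fixed $n$, a cover of $[n]$ of the required kind exists if and only if a coloring of $[n]$ avoiding all the forbidden monochromatic cliques exists. Once this equivalence is in hand, the least $n$ with no good cover is exactly the least $n$ with no good coloring. By \Cref{thm:main}, that value is $\Rcop(k_1,\ldots,k_c)=p_{\sum_i(k_i-1)}$.

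The first direction is from colorings to covers. Given $\chi:[n]\to\{1,\ldots,c\}$ with no color $i$ containing $k_i$ pairwise coprime integers, I set $A_i=\chi^{-1}(i)$. These sets form a partition, hence a cover, and each $A_i$ has the required property.

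The other direction is from covers to colorings. Given a cover $[n]=A_1\cup\cdots\cup A_c$ where each $A_i$ contains no $k_i$ pairwise coprime integers, I define $\chi(m)$ to be the least $i$ with $m\in A_i$. Then $\chi^{-1}(i)\subseteq A_i$. The property of containing no $k_i$ pairwise coprime integers is inherited by subsets, so $\chi$ is a good coloring.

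The only point needing care is this monotonicity under passing to subsets, which is immediate. One should also check that the threshold is well defined. Good covers of $[n]$ restrict to good covers of $[n']$ for $n'\le n$, by intersecting each $A_i$ with $[n']$, so the set of $n$ admitting a good cover is downward closed. Together with \Cref{thm:main} this gives the equality. There is no genuine obstacle; alternatively one could repeat the upper bound directly, since the prime clique of size $M+1$ forces some $A_i$ to meet it in at least $k_i$ points by pigeonhole, while the prime-bin coloring is itself a good cover.
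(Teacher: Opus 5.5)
Your proof is correct, but its main line differs from the paper's. The paper does not convert covers into partitions. At $n=p_M$ it reruns the pigeonhole argument directly on covers: each $A_i$ meets the $(M+1)$-vertex prime clique in at most $k_i-1$ points, so the $A_i$ can cover at most $M$ of its vertices. For $n<p_M$ it notes that the prime-bin coloring from \Cref{thm:main} is in particular a cover. This is exactly the alternative you sketch in your final sentence.

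Your main argument instead proves a general fact. For any property inherited by subsets, a good cover of $[n]$ exists if and only if a good coloring does, via the least-index refinement $\chi(m)=\min\{i:m\in A_i\}$. You then invoke \Cref{thm:main} as a black box. Your route buys generality and modularity: it shows the covering and coloring thresholds coincide on any host graph and for any hereditary forbidden configuration, regardless of how the coloring threshold was proved. The paper's route keeps everything inside its certificate viewpoint, so that the prime clique itself directly certifies non-coverability.

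Your downward-closedness check is harmless but unnecessary. The set of $n$ with no good cover equals the set of $n$ with no good coloring, so their least elements coincide automatically.
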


\begin{proof}
Let $M=\sum_i(k_i-1)$.  At $n=p_M$, the prime clique has $M+1$ vertices.  If
it were covered by sets $A_i$ with no $k_i$ pairwise coprime vertices, then
each $A_i$ would contain at most $k_i-1$ vertices of this clique, so the total
covering capacity would be at most $M$, impossible.  For $n<p_M$, the
prime-bin coloring in the proof of \Cref{thm:main} is in particular a cover by
the color classes, so the same lower-bound construction applies.
\end{proof}

\begin{corollary}[Asymptotics]
For fixed $c$,
\[
  \Rcop(k;c)\sim c(k-1)\log(c(k-1)).
\]
More precisely, with $m=c(k-1)$,
\[
  \Rcop(k;c)
  =m\bigl(\log m+\log\log m-1+o(1)\bigr).
\]
In particular, $\Rcop(k;2)=\Theta(k\log k)$.
\end{corollary}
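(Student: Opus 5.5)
By \Cref{cor:forms}, $\Rcop(k;c)=p_m$ with $m=c(k-1)$. The corollary is therefore a statement about the size of the $m$-th prime. Since $c$ is fixed and $k\to\infty$, we have $m\to\infty$, so the whole claim reduces to an asymptotic expansion of $p_m$ as $m\to\infty$. I would not redo any combinatorics; the plan is to invoke the prime number theorem in its refined form and substitute.

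The key input is the classical refinement
\[
  p_m = m\bigl(\log m+\log\log m-1+o(1)\bigr)\qquad(m\to\infty),
\]
due to Cipolla. Alternatively, one can use explicit bounds such as Rosser--Schoenfeld or Dusart, e.g.\ $m(\log m+\log\log m-1)<p_m<m(\log m+\log\log m)$ for $m\ge 6$, sharpened to the $o(1)$ form. If I wanted to derive it rather than cite it, I would start from $\pi(x)=x/\log x+x/\log^2x+O(x/\log^3 x)$, which follows from $\pi(x)=\mathrm{li}(x)+O(x e^{-c\sqrt{\log x}})$ and integration by parts. I would then invert this at $x=p_m$, where $\pi(x)=m$. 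A first pass gives $\log p_m=\log m+\log\log m+o(1)$. Substituting back,
\[
  p_m=m\log p_m\Bigl(1-\tfrac{1}{\log p_m}+O(\log^{-2}p_m)\Bigr)=m(\log p_m-1+o(1)),
\]
and inserting the first-pass estimate for $\log p_m$ yields the stated expansion.

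With $m=c(k-1)$, the display in the corollary is immediate. Since $\log\log m=o(\log m)$, it gives $\Rcop(k;c)\sim m\log m=c(k-1)\log(c(k-1))$. For $c=2$ we get $m\log m=2(k-1)\log(2k-2)$, which is $\Theta(k\log k)$ because $\log(2k-2)\sim\log k$.

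The only step with real content is the inversion producing the $-1+o(1)$ term. It needs the second-order term $x/\log^2x$ in $\pi(x)$, not merely $\pi(x)\sim x/\log x$. It also requires care that the error in $\log p_m$ enters only as $o(1)$: the correction $\log(1+O(\log\log m/\log m))$ is $o(1)$, so this works. In the paper I would simply cite this standard result rather than reprove it.
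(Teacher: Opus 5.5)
Your proposal is correct and follows the paper's proof: it reduces via \Cref{cor:forms} to $p_m$ with $m=c(k-1)$ and then applies the standard expansion $p_m=m(\log m+\log\log m-1+o(1))$. Your inversion of $\pi(x)=x/\log x+x/\log^2x+O(x/\log^3x)$ is a sound derivation of that expansion, which the paper simply cites as standard.
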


\begin{proof}
This is the standard asymptotic expansion for the $m$-th prime, applied with
$m=c(k-1)$.
\end{proof}

\begin{corollary}[Quantitative gap from classical off-diagonal Ramsey]
\label{cor:r3k-gap}
For the classical complete-graph edge Ramsey number $\Rcl(3,k)$,
\[
  \frac{\Rcop(3,k)}{\Rcl(3,k)}
  =\Theta\!\left(\frac{(\log k)^2}{k}\right)\to0 .
\]
\end{corollary}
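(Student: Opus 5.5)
By \Cref{cor:forms}, $\Rcop(3,k)=p_{k+1}$. The plan is to combine this exact value with the prime number theorem and the known order of growth of $\Rcl(3,k)$.

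For the numerator, the prime number theorem in the form $p_m\sim m\log m$ gives $p_{k+1}\sim (k+1)\log(k+1)$. Hence $\Rcop(3,k)=\Theta(k\log k)$; Chebyshev-type bounds $c_1 m\log m\le p_m\le c_2 m\log m$ would already suffice here.

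For the denominator, I will use the classical order of magnitude $\Rcl(3,k)=\Theta(k^2/\log k)$. The upper bound $\Rcl(3,k)\le (1+o(1))k^2/\log k$ is due to Ajtai--Koml\'os--Szemer\'edi and Shearer. The matching lower bound of order $k^2/\log k$ is due to Kim, with sharper constants later obtained via the triangle-free process by Bohman--Keevash and Fiz Pontiveros--Griffiths--Morris. Only the $\Theta$-statement is needed, so constants play no role. This is cited external input rather than something proved in the paper, and stating it with the right references is the only real content of the step.

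Dividing the two estimates gives
\[
  \frac{\Rcop(3,k)}{\Rcl(3,k)}=\Theta\!\left(\frac{k\log k}{k^2/\log k}\right)=\Theta\!\left(\frac{(\log k)^2}{k}\right),
\]
and this tends to $0$ because $(\log k)^2=o(k)$. Each $\Theta$ is a two-sided bound with absolute constants for $k$ large, so the ratio inherits two-sided bounds; small $k$ are irrelevant to an asymptotic statement. No step is a genuine obstacle.
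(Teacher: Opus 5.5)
Your proof is correct and follows the paper's own argument: read off $\Rcop(3,k)=p_{k+1}\sim k\log k$ from \Cref{cor:forms}, import $\Rcl(3,k)=\Theta(k^2/\log k)$ from Ajtai--Koml\'os--Szemer\'edi and Kim, and divide. The additional references you mention (Shearer, the triangle-free process papers) only sharpen constants and are not needed for the $\Theta$-statement.
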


\begin{proof}
By \Cref{cor:forms}, $\Rcop(3,k)=p_{k+1}\sim k\log k$.  The classical estimate
$\Rcl(3,k)=\Theta(k^2/\log k)$ follows from the upper bound of
Ajtai--Koml\'os--Szemer\'edi and the lower bound of Kim
\cite{ajtai-komlos-szemeredi1980,kim1995}.  Dividing the two estimates gives
the claim.
\end{proof}

\section{Three Certificate Extensions}
\label{sec:extensions}

The prime-bin proof is not only a proof for one vertex-coloring problem.  It
identifies exactly which structure is being used: every non-universal vertex
carries a nonempty set of prime atoms, and pairwise-coprime cliques inject
into disjoint atoms.  Three immediate consequences follow: a
support-disjointness generalization, an algorithmic primitive that
recognizes that support model on an arbitrary host graph, and an exact
reduction of the edge-coloring variant to classical Ramsey numbers.

\subsection{Support-Disjointness Graphs}

Let $A$ be a finite set of atoms with $|A|=r$.  A support-disjointness graph
is a graph whose vertices carry supports $\sigma(v)\subseteq A$, with
vertices adjacent if and only if their supports are disjoint.  Assume that every
singleton support $\{a\}$, $a\in A$, occurs as a vertex.  There are two cases
we need:

\begin{itemize}
  \item the \emph{one-universal} case, where exactly one vertex has empty
  support;
  \item the \emph{no-universal} case, where every vertex has nonempty
  support.
\end{itemize}
Vertices with full support $A$ are allowed in the no-universal case.  They
are isolated, since their support is disjoint from no nonempty support, and
therefore they never help form a clique of size at least two.

\begin{theorem}[Support-disjointness Ramsey theorem]
\label{thm:support}
For mixed vertex demands $k_1,\ldots,k_c\ge 2$, let
$M=\sum_i(k_i-1)$.  In the one-universal case, every $c$-coloring forces a
monochromatic $K_{k_i}$ in some color $i$ if and only if $r\ge M$.  In the
no-universal case, the corresponding condition is $r\ge M+1$.
\end{theorem}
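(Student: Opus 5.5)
The plan mirrors the proof of \Cref{thm:main}, with the prime clique replaced by the singleton-support clique and the prime bins replaced by atom bins. First I would compute the clique number, exactly as in \Cref{lem:max-clique}. In a clique, the vertices with nonempty support have pairwise disjoint supports. Choosing one atom from each such support is therefore injective into $A$. So every clique has at most $r$ vertices of nonempty support, plus possibly the universal vertex in the one-universal case. Conversely, the singleton vertices $\{a\}$, $a\in A$, form a clique of size $r$. Adding the empty-support vertex, which is adjacent to everything, gives a clique of size $r+1$ in the one-universal case.

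For sufficiency I would use pigeonhole on this labelled clique. In the one-universal case with $r\ge M$, the clique (universal vertex together with the singletons) has at least $M+1$ vertices. In the no-universal case with $r\ge M+1$, the singleton clique has at least $M+1$ vertices. If every color $i$ met this clique in at most $k_i-1$ vertices, the clique would have at most $M$ vertices, a contradiction. Hence some color $i$ contains $k_i$ of its vertices, which form a monochromatic $K_{k_i}$.

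For necessity I would build an avoiding coloring from atom bins.

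\textbf{One-universal case, $r\le M-1$.} Partition $A$ into bins with $|B_1|\le k_1-2$ and $|B_i|\le k_i-1$ for $i\ge2$; the total capacity is $M-1\ge r$, so this is possible. Give the universal vertex color $1$. Give every other vertex $v$ the color of a bin containing a chosen atom $q(v)\in\sigma(v)$.

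\textbf{No-universal case, $r\le M$.} Use capacities $|B_i|\le k_i-1$ for all $i$, with total capacity $M\ge r$, and color every vertex by the bin of one of its atoms. Full-support vertices are no exception: they are nonempty, so they receive a color this way. Being isolated, they lie in no clique of size at least two, so they cannot spoil the bound.

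In both cases a monochromatic clique of color $i$ has distinct witness atoms, because disjoint supports force distinct choices of $q(v)$. So it has at most $|B_i|$ non-universal vertices, plus the universal vertex only in color $1$ of the one-universal case. This gives at most $k_i-1$ vertices in every color.

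The only delicate point is bookkeeping rather than mathematics. One has to check that the universal vertex is what shifts the threshold by one between the two cases: it adds one vertex to the extremal clique and costs one unit of capacity in bin $B_1$. One should also check that the cases $c=1$ and $k_1=2$, where $B_1$ has capacity $0$, are handled. With $|B_1|=0$, color $1$ contains only the universal vertex, and a single vertex is not a $K_2$, so the construction still works.
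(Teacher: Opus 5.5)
Your proposal is correct and follows the paper's proof essentially step for step: pigeonhole on the singleton-support clique (plus the empty-support vertex in the one-universal case) for sufficiency, and atom bins of capacities $k_1-2,k_2-1,\ldots,k_c-1$ (respectively $k_i-1$ for all $i$ in the no-universal case) with the distinct-witness-atom injection for the avoiding coloring. Your preliminary clique-number computation and the edge-case check for $|B_1|=0$ are harmless additions that the paper's proof does not need.
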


\begin{proof}
We prove the two directions separately.  In the one-universal case, if
$r\ge M$, the singleton-support vertices together with the universal vertex
form a clique of size $r+1\ge M+1$.  If no color $i$ contained $k_i$ vertices
on this clique, the clique would have at most $\sum_i(k_i-1)=M$ vertices, a
contradiction.  Thus every coloring forces a forbidden monochromatic clique.
In the no-universal case, the singleton-support vertices form a clique of
size $r$, so the same pigeonhole argument forces a clique exactly when
$r\ge M+1$.

Conversely, suppose first that we are in the one-universal case and
$r<M$, so $r\le M-1$.  Color the empty-support vertex with color $1$ and
partition the atoms into bins $B_1,\ldots,B_c$ with
$|B_1|\le k_1-2$ and $|B_i|\le k_i-1$ for $i\ge2$; these capacities sum to
$M-1$.  Since the bins cover $A$, every nonempty support meets at least one
bin.  Color each nonempty-support vertex by any bin meeting its support.
For a monochromatic clique in color $i$, choose one atom in
$\sigma(v)\cap B_i$ from each nonempty-support vertex $v$ in the clique.
The supports in a clique are pairwise disjoint, so these chosen atoms are
distinct.  Hence color $1$ has clique size at most $|B_1|+1\le k_1-1$, and
each other color $i$ has clique size at most $|B_i|\le k_i-1$.

In the no-universal case, when $r<M+1$ we have $r\le M$.  Partition the
atoms into bins of capacities $k_1-1,\ldots,k_c-1$ and repeat the same
support-injection coloring.  There is no empty-support vertex, so every
monochromatic clique in color $i$ has size at most $k_i-1$.
\end{proof}

\Cref{thm:main} is the one-universal case with atoms equal to primes
$\le n$ and the empty-support vertex equal to $1$.  The proper-divisor
coprime graph $\Gamma_N$ of one composite integer $N$, whose vertices are
proper divisors $d$ with adjacency $\gcd(d,e)=1$, is the no-universal case
with atoms the prime divisors of $N$, after isolated full-support divisors
are allowed but never help form cliques.  Thus its mixed vertex Ramsey
threshold is controlled by $\nu(N)$, the number of prime divisors of $N$.
The following corollary records the squarefree-kernel case explicitly.

\begin{corollary}[Squarefree-kernel coprime graph]
\label{cor:squarefree-kernel}
Let $G_n^{\mathrm{rad}}$ have vertex set $[n]$ and join $a,b$ when
\[
  \gcd(\operatorname{rad}(a),\operatorname{rad}(b))=1,
\]
where $\operatorname{rad}(a)$ is the product of the distinct primes dividing
$a$.  The mixed vertex Ramsey number on $G_n^{\mathrm{rad}}$ is
\[
  R_{\mathrm{cop}}^{\mathrm{rad}}(k_1,\ldots,k_c)
  =p_{\sum_i(k_i-1)}.
\]
\end{corollary}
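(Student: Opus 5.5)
The plan is to recognize $G_n^{\mathrm{rad}}$ as a support-disjointness graph in the one-universal case and then apply \Cref{thm:support} with the atom count $r=\pi(n)$. Take the atom set $A$ to be the primes at most $n$, and assign to each $a\in[n]$ the support $\sigma(a)=\{p\text{ prime}: p\mid a\}$, the set of primes dividing $\operatorname{rad}(a)$.

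The first step is to check the hypotheses of \Cref{thm:support}. Adjacency holds exactly when $\gcd(\operatorname{rad}(a),\operatorname{rad}(b))=1$. This is equivalent to $\sigma(a)\cap\sigma(b)=\emptyset$, since $\operatorname{rad}(a)$ and $\operatorname{rad}(b)$ are squarefree and their common prime divisors are exactly $\sigma(a)\cap\sigma(b)$. Every singleton support $\{p\}$ with $p\le n$ occurs, realized by the vertex $p$ itself. The only vertex with empty support is $1$, so we are in the one-universal case.

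We also need that several vertices may share a support, for example $2$ and $4$. The statement of \Cref{thm:support} does not forbid this, but I should recheck that its proof never uses distinctness of supports. The upper-bound direction only uses the singleton vertices and the vertex $1$. The lower-bound direction chooses, for each vertex of a monochromatic clique, an atom in its support meeting the bin; distinct clique vertices have disjoint supports, so the chosen atoms are distinct. Two vertices with equal nonempty supports are nonadjacent, so they never lie in a common clique. The proof therefore goes through verbatim.

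Next, \Cref{thm:support} says every $c$-coloring forces a forbidden monochromatic clique if and only if $\pi(n)\ge M$, where $M=\sum_i(k_i-1)$. The least $n$ with $\pi(n)\ge M$ is $p_M$, because $\pi$ is nondecreasing, jumps to $M$ exactly at $p_M$, and satisfies $\pi(n)\le M-1$ for $n<p_M$. This gives $R_{\mathrm{cop}}^{\mathrm{rad}}(k_1,\ldots,k_c)=p_M$.

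A quicker alternative is to note that $\gcd(\operatorname{rad}(a),\operatorname{rad}(b))=1$ if and only if $\gcd(a,b)=1$. Hence $G_n^{\mathrm{rad}}=G_n$, and the result is literally \Cref{thm:main}; I would mention this as a one-line remark. There is no real obstacle in either route. The only point needing care is the multiplicity of equal supports in the support framework, handled above.
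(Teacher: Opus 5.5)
Your proposal is correct and follows the paper's own proof: identify $G_n^{\mathrm{rad}}$ as the one-universal support-disjointness graph whose atoms are the primes $\le n$ and whose empty-support vertex is $1$, then apply \Cref{thm:support} with $r=\pi(n)$. Your two additions are both valid and only make explicit what the paper's short proof leaves implicit: that repeated supports (e.g.\ $2$ and $4$) do not disturb the atom-injection argument, and that $G_n^{\mathrm{rad}}=G_n$ outright.
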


\begin{proof}
The support of $a$ is exactly the set of primes dividing
$\operatorname{rad}(a)$, and disjointness of these supports is equivalent to
$\gcd(\operatorname{rad}(a),\operatorname{rad}(b))=1$.  Thus
$G_n^{\mathrm{rad}}$ is the same one-universal support-disjointness graph as
$G_n$, with the same singleton supports and the same empty-support vertex
$1$.  Apply \Cref{thm:support}.
\end{proof}

The last row of \Cref{tab:support-examples} is equally important: if a host
graph adds edges not explained by disjoint supports, then the injection into
atom bins can fail and the theorem is no longer available for free.  This is
a structural restriction, not a technicality: the support theorem applies to
pure support-disjointness hosts and to variants that preserve that adjacency
rule.

\begin{table}[htbp]
\centering
\caption{The prime-bin theorem is a support-disjointness statement, not an
accident of the interval $[n]$.  The last row warns that extra edges destroy
the certificate.}
\label{tab:support-examples}
\small
\begin{tabular}{@{}>{\raggedright\arraybackslash}p{0.30\linewidth}>{\raggedright\arraybackslash}p{0.28\linewidth}>{\raggedright\arraybackslash}p{0.32\linewidth}@{}}
\toprule
Host graph & Resource rank & Ramsey threshold in resource rank \\
\midrule
Integer coprime graph $G_n$ on $[n]$ & $\pi(n)$ prime atoms plus vertex $1$ & force iff $\pi(n)\ge \sum_i(k_i-1)$ \\
Support-disjointness graph with one empty-support vertex & $r$ atoms plus one universal vertex & force iff $r\ge \sum_i(k_i-1)$ \\
Proper-divisor coprime graph $\Gamma_N$ & $\nu(N)$ prime divisors, no vertex $1$ & force iff $\nu(N)\ge 1+\sum_i(k_i-1)$ \\
Squarefree-kernel version on $[n]$ & same prime supports as $G_n$ & identical threshold to $G_n$ \\
Graphs with extra edges beyond support-disjointness, e.g. nontrivial $\gcd(a,b)\in D$ rules & supports no longer control all adjacencies & not covered by the theorem \\
\bottomrule
\end{tabular}

\end{table}

\subsection{A Certificate-Extraction Primitive}
\label{subsec:certificate-primitive}

The support theorem can be used as a small algorithmic primitive.  It is not
a general Ramsey solver; rather, it recognizes when a Ramsey instance has the
same support-disjointness certificate as the coprime graph.
\Cref{prop:certificate-primitive} states the primitive in the form used by
the reproducibility scripts.

\begin{proposition}[Support certificate primitive]
\label{prop:certificate-primitive}
Suppose a finite graph $H=(V,E)$ is given together with bitset supports
$\sigma(v)\subseteq A$, $|A|=r$, and mixed vertex demands
$k_1,\ldots,k_c$.  In $O(|V|^2r)$ bit operations, one can verify the following
certificate conditions:
\begin{enumerate}
  \item every singleton support occurs;
  \item the number of empty-support vertices is either zero or one;
  \item for every pair $u,v$, $uv\in E$ if and only if
  $\sigma(u)\cap\sigma(v)=\emptyset$.
\end{enumerate}
If the support conditions hold, the algorithm returns the exact forcing
condition in \Cref{thm:support}.  If the rank is in the forcing range it
outputs the singleton-support clique, with the empty-support vertex if
present.  If the rank is below the forcing range it outputs the atom-bin
avoiding coloring from the proof of \Cref{thm:support}.
\end{proposition}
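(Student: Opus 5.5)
The proof is algorithmic bookkeeping on top of \Cref{thm:support}: describe a procedure, check that it runs in $O(|V|^2r)$ bit operations, and show that its outputs are correct by appealing to the proof of \Cref{thm:support}. Supports are stored as length-$r$ bitsets, so each intersection, emptiness test, or singleton test costs $O(r)$ bit operations.

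\textbf{Step 1: verify the support conditions.}
\begin{enumerate}
  \item Scan all vertices once, testing each support for emptiness and for being a singleton. Record the number of empty supports, and record one witness vertex $v_a$ for each atom $a$ whose singleton $\{a\}$ occurs. This takes $O(|V|r)$ operations.
  \item Condition (1) holds iff every atom has a witness. Condition (2) holds iff the empty-support count is $0$ or $1$.
  \item For condition (3), loop over all $\binom{|V|}{2}$ unordered pairs $u,v$. Compute $\sigma(u)\wedge\sigma(v)$ and test it for zero in $O(r)$. Compare the result with the adjacency bit, read from an adjacency matrix in $O(1)$; if $E$ is given as a list, first build the matrix in $O(|V|^2)$. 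This step costs $O(|V|^2 r)$ and dominates the running time.
  \item If any test fails, reject: the certificate does not apply.
\end{enumerate}

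\textbf{Step 2: decide the forcing condition.} Once the checks pass, $H$ is exactly a support-disjointness graph in the one-universal or no-universal case, as defined before \Cref{thm:support}. Compute $M=\sum_i(k_i-1)$ in $O(c)$ arithmetic; note that $c\le|V|$ may be assumed, or this cost simply adds to the bound. Compare $r$ with $M$ in the one-universal case and with $M+1$ in the no-universal case. By \Cref{thm:support}, this comparison is exactly the forcing condition.

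\textbf{Step 3a: forcing range.} Output the clique $\{v_a:a\in A\}$, together with the empty-support vertex if there is one. It is a clique because distinct singletons are disjoint, and condition (3) converts disjointness into adjacency. Its size is $r$ or $r+1$, which is at least $M+1$. The pigeonhole argument in \Cref{thm:support} then shows that every coloring has a color $i$ with $k_i$ vertices of this clique, so it is a valid forcing certificate.

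\textbf{Step 3b: below the forcing range.} Build the bins greedily by assigning atoms in order to $B_1,\ldots,B_c$ up to the capacities. In the one-universal case the capacities are $k_1-2,\,k_2-1,\ldots,k_c-1$, with total $M-1\ge r$; in the no-universal case they are $k_i-1$, with total $M\ge r$. Record each atom's bin in an array. Then color each vertex:
\begin{itemize}
  \item the empty-support vertex receives color $1$;
  \item every other vertex scans its bitset for its first set atom and receives that atom's bin, in $O(r)$ per vertex.
\end{itemize}
Validity is exactly the injection argument in the proof of \Cref{thm:support}. That argument only requires each color to be a bin meeting the vertex's support, which holds because the chosen atom lies in that bin.

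\textbf{Main obstacle.} The one real point is justifying that \Cref{thm:support} applies to the given input graph $H$. That is exactly what condition (3) provides: adjacency coincides with support disjointness. The remaining effort is bit-complexity bookkeeping, in particular noting that vertices with full support are permitted, and that they receive a color like any other vertex in the bin coloring.
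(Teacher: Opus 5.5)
Your proposal is correct and follows the same route as the paper's proof. That proof also checks singleton coverage and the empty-support count in one scan, checks adjacency against bitset-disjointness over all pairs in $O(|V|^2 r)$, and then outputs the singleton clique or the atom-bin coloring from the proof of \Cref{thm:support}. Your extra details (greedy bin filling, coloring by the first set atom, and accounting for the $O(c)$ cost of computing $M$) just make explicit what the paper leaves implicit.
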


\begin{proof}
Represent supports as bitsets.  Singleton coverage and the number of empty
supports are checked by one scan over $V$.  The edge condition is checked by
testing, for each unordered pair $u,v$, whether the bitset intersection
$\sigma(u)\cap\sigma(v)$ is empty and comparing this with adjacency in
$H$, which costs $O(|V|^2r)$ bit operations in the naive representation.
After these checks pass, the output is exactly the constructive proof of
\Cref{thm:support}: the forcing certificate is the clique of singleton
supports (plus the empty-support vertex in the one-universal case), and the
avoiding certificate is obtained by partitioning atoms into the bin
capacities used in the lower-bound proof.
\end{proof}

This primitive is the algorithmic form of the paper's main lesson.  A direct
SAT encoding asks the solver to discover millions of anti-clique clauses; a
support certificate first asks whether all adjacency is already explained by
disjoint resource atoms.  When the answer is yes, the Ramsey threshold and
both certificates are obtained without search.  When the answer is no, as in
host graphs with additional $\gcd(a,b)\in D$ edges, the primitive fails
explicitly rather than silently suggesting that the prime-bin proof still
applies.  \Cref{tab:support-primitive} records the corresponding small
recognition outputs.

\begin{table}[htbp]
\centering
\caption{Small outputs of the support-certificate primitive.  Shifted
intervals still have support-disjoint adjacency, but they may fail singleton
coverage for the atom set induced by the interval.}
\label{tab:support-primitive}
\small
\begin{tabular}{@{}>{\raggedright\arraybackslash}p{0.22\linewidth}>{\raggedright\arraybackslash}p{0.20\linewidth}>{\raggedright\arraybackslash}p{0.46\linewidth}@{}}
\toprule
Instance & Primitive result & Explanation \\
\midrule
$G_{30}$ & pass & passes: one universal vertex and all prime singletons occur \\
$[11,17]$ & fail & fails singleton coverage for atoms $3,5,7$ \\
$G_{30}$ plus edge $6$--$10$ & fail & fails adjacency iff support-disjointness \\
\bottomrule
\end{tabular}

\end{table}

\subsection{Edge-Coloring Reduction}

Let $\Rcl(k_1,\ldots,k_c)$ denote the classical multicolor edge Ramsey number
for complete graphs: the least $N$ such that every $c$-edge-coloring of
$K_N$ contains a monochromatic $K_{k_i}$ in some color $i$.  Define
$\Redge(k_1,\ldots,k_c)$ similarly, but with edges of $G_n$ colored instead
of edges of $K_n$.

\begin{theorem}[Edge-coprime Ramsey reduction]
\label{thm:edge-reduction}
For all $k_1,\ldots,k_c\ge 2$,
\[
  \Redge(k_1,\ldots,k_c)=p_{\Rcl(k_1,\ldots,k_c)-1}.
\]
\end{theorem}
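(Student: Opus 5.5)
Let $N=\Rcl(k_1,\ldots,k_c)$. The plan is to show that edge-Ramsey forcing in $G_n$ depends only on the clique number $\omega(G_n)=\pi(n)+1$ from \Cref{lem:max-clique}. The precise claim is: every $c$-edge-coloring of $G_n$ contains a monochromatic $K_{k_i}$ in some color $i$ if and only if $\pi(n)+1\ge N$. Given this, the least such $n$ is the least $n$ with $\pi(n)\ge N-1$, namely $n=p_{N-1}$. Since $k_i\ge2$ forces $N\ge2$, the index $N-1\ge1$ is meaningful.

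\emph{Upper bound.} Suppose $\pi(n)+1\ge N$. The prime clique $P_n=\{1\}\cup\{p\le n\}$ spans a complete subgraph of $G_n$ on at least $N$ vertices. Any $c$-edge-coloring of $G_n$ restricts to a $c$-edge-coloring of this $K_{|P_n|}$. By the definition of $\Rcl$, the restriction contains a monochromatic $K_{k_i}$ in some color $i$, and that clique is also a monochromatic clique in $G_n$. Hence $\Redge\le p_{N-1}$.

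\emph{Lower bound.} This is the substantive direction. Suppose $\pi(n)+1\le N-1$, and set $r=\pi(n)+1$. The idea is to pull back an extremal classical coloring through a graph homomorphism into $K_r$ that is injective on cliques, which the prime-witness labels supply.
\begin{enumerate}
\item Fix a coloring $\psi$ of $E(K_{N-1})$ with no monochromatic $K_{k_i}$ in any color $i$; one exists by the definition of $\Rcl$. Restrict $\psi$ to a copy of $K_r$ on label set $L=\{0\}\cup\{\text{primes }\le n\}$.
\item Define a labeling $\lambda:[n]\to L$ by $\lambda(1)=0$ and $\lambda(m)=q(m)$ for $m>1$, where $q(m)$ is a chosen prime divisor of $m$, exactly as in the proof of \Cref{thm:main}.
\item Check that $\lambda$ is injective on every clique of $G_n$. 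Two coprime integers greater than $1$ have different witnesses, and $1$ is the only vertex with label $0$.
\item Color each edge $ab$ of $G_n$ by $\psi(\lambda(a)\lambda(b))$. This is well defined because adjacent vertices receive distinct labels by step 3.
\item Take any monochromatic clique in $G_n$. By step 3, $\lambda$ maps it injectively onto a clique of $K_r$ of the same color and the same size. That size is therefore less than $k_i$ for each color $i$.
\end{enumerate}
So no $n$ with $\pi(n)\le N-2$ forces, and in particular no $n<p_{N-1}$ forces.

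The main obstacle is the lower bound, specifically ensuring that the pulled-back coloring creates no new monochromatic cliques. That is exactly the clique-injectivity of $\lambda$ in step 3, the same support-injection used in \Cref{thm:support}. Everything else is routine bookkeeping on indices.
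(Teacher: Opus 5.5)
Your proposal is correct and follows essentially the same route as the paper. For the upper bound you restrict to the prime clique $\{1\}\cup\{p\le n\}$. For the lower bound you pull back an avoiding classical coloring of $K_{\Rcl-1}$, restricted to the label set, through a prime-divisor labeling that is injective on coprime cliques; phrasing this as an equivalence in the rank $\pi(n)+1$ simply anticipates the paper's later rank-transfer proposition.
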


\begin{proof}
Let $R=\Rcl(k_1,\ldots,k_c)$.  At $n=p_{R-1}$, the prime clique
$\{1\}\cup\{p\le n:p\text{ prime}\}$ has $R$ vertices.  Any edge-coloring of
$G_n$ restricted to this clique is an edge-coloring of $K_R$, so it contains
a monochromatic $K_{k_i}$ for some $i$.  Hence $\Redge\le p_{R-1}$.

For $n<p_{R-1}$, the label set
\[
  L_n=\{*\}\cup\{p\le n:p\text{ prime}\}
\]
has at most $R-1$ elements.  By the definition of $R$, there is a classical
$c$-edge-coloring of $K_{R-1}$ with no forbidden monochromatic clique; if
$|L_n|<R-1$, restrict such a coloring to any $|L_n|$ labels.  Label integer
$1$ by $*$ and every $m>1$ by one chosen prime divisor $\ell(m)$.
Color a coprime edge $\{a,b\}$ of $G_n$ by the color of
$\{\ell(a),\ell(b)\}$ in the classical coloring.  In any coprime clique of
$G_n$, these labels are distinct: two non-one coprime integers cannot share a
chosen prime divisor, and only vertex $1$ has label $*$.  Therefore a
monochromatic forbidden clique in $G_n$ would map to one in the classical
coloring, a contradiction.  Thus $\Redge>n$ for every $n<p_{R-1}$.
\end{proof}

\Cref{tab:edge-coprime} lists the first edge-coprime values and bounds
obtained from \Cref{thm:edge-reduction}; these numbers should be read as
classical Ramsey data viewed through the prime-index map, not as new
arithmetic constants.

\begin{table}[htbp]
\centering
\caption{Edge-coprime Ramsey values are prime-index images of classical
complete-graph Ramsey values.}
\label{tab:edge-coprime}
\small
\begin{tabular}{@{}lrrr@{}}
\toprule
Classical value & classical $R$ & edge-coprime value & status \\
\midrule
R(3,3) & 6 & 11 & exact \\
R(4,4) & 18 & 59 & exact \\
R(5,5) & 43--46 & 181--197 & best-known bounds \\
R(3,4) & 9 & 19 & exact \\
R(3,5) & 14 & 41 & exact \\
\bottomrule
\end{tabular}

\end{table}

\subsection{A Common Clique-Label Mechanism}
\label{subsec:clique-label}

The vertex theorem and the edge reduction are two projections of the same
certificate.  Let $H$ be a finite graph.  A \emph{clique-label certificate of
rank $r$} consists of a clique $C\subseteq V(H)$ with $|C|=r$ and a map
\[
  \lambda:V(H)\to C
\]
such that $\lambda$ is the identity on $C$ and is injective on every clique of
$H$.  Equivalently, it is enough to check that adjacent vertices receive
distinct labels: any two distinct vertices in a clique are adjacent, so the
labels on a clique are then pairwise distinct.

\begin{proposition}[Rank Ramsey transfer]
\label{prop:rank-transfer}
Suppose $H$ has a clique-label certificate of rank $r$.  For vertex colorings,
every $c$-coloring of $V(H)$ contains a monochromatic $K_{k_i}$ in some color
$i$ if and only if
\[
  r\ge 1+\sum_{i=1}^c(k_i-1).
\]
For edge colorings, every $c$-edge-coloring of $H$ contains such a
monochromatic clique if and only if
\[
  r\ge \Rcl(k_1,\ldots,k_c).
\]
\end{proposition}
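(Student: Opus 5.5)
Each of the two equivalences will be proved by two matching certificates: the clique $C$ for the forcing direction and the label map $\lambda$ for the avoiding direction, exactly as in the proofs of \Cref{thm:main} and \Cref{thm:edge-reduction}. The basic tool is that $\lambda$ is injective on cliques. Hence any clique $K$ of $H$ maps bijectively onto a clique $\lambda(K)\subseteq C$ of the same size. This holds trivially, since $C$ is a clique and any subset of it is a clique.

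\emph{Vertex colorings.} For the forcing direction, assume $r\ge 1+M$ with $M=\sum_i(k_i-1)$. Restrict an arbitrary $c$-coloring to $C$. If every color $i$ met $C$ in at most $k_i-1$ vertices, then $|C|\le M<r$, a contradiction. So some color class contains $k_i$ vertices of $C$, which form a monochromatic $K_{k_i}$.

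For the avoiding direction, assume $r\le M$. Partition $C$ into bins $B_1,\ldots,B_c$ with $|B_i|\le k_i-1$; this is possible because the capacities sum to $M\ge r$. Color each $v\in V(H)$ by the index of the bin containing $\lambda(v)$. A monochromatic clique $K$ in color $i$ injects via $\lambda$ into $B_i$, so $|K|\le k_i-1$. No special treatment of a universal vertex is needed, because the label set $C$ already plays the role of ``primes plus $*$''.

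\emph{Edge colorings.} For the forcing direction, assume $r\ge \Rcl(k_1,\ldots,k_c)$. An edge-coloring of $H$ restricts to an edge-coloring of the complete graph on $C$, which is $K_r$. By definition of $\Rcl$ and monotonicity in $r$, this restriction contains a monochromatic $K_{k_i}$.

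For the avoiding direction, assume $r<\Rcl$. By minimality of $\Rcl$ there is a $c$-edge-coloring $\phi$ of $K_{\Rcl-1}$ with no forbidden monochromatic clique. Restrict $\phi$ to an $r$-subset and identify that subset with $C$. For an edge $uv$ of $H$, the labels $\lambda(u)$ and $\lambda(v)$ are distinct, since adjacent vertices form a $2$-clique. Color $uv$ by $\phi(\lambda(u)\lambda(v))$. If $K$ were a monochromatic $K_{k_i}$ in color $i$, then $\lambda(K)$ would be a $k_i$-set in $C$ all of whose pairs have color $i$ under $\phi$, a contradiction.

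The only step needing care is this edge avoiding construction. The coloring must be well defined, which requires distinct labels on every edge, and the pullback must preserve cliques. Both follow from injectivity of $\lambda$ on cliques. The equivalence noted before the proposition, that distinctness on adjacent pairs suffices, is exactly what is used here.

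A minor point is the degenerate case $r<\Rcl-1$. There we simply restrict $\phi$ to a subset, since monochromatic-clique-freeness passes to induced subcolorings. Finally, I would remark that \Cref{thm:main} and \Cref{thm:edge-reduction} are recovered from this proposition by taking $C=P_n$ and $\lambda(m)=q(m)$, with $\lambda(1)=1$.
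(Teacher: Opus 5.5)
Your proposal is correct and follows essentially the same route as the paper. The forcing directions use pigeonhole, respectively classical Ramsey, on the rank-$r$ clique $C$; the avoiding directions pull back a bin coloring of the labels, respectively an avoiding edge-coloring of $K_r$, through the clique-injective label map $\lambda$. Your remarks about restricting a coloring of $K_{\Rcl-1}$ to $r$ labels and about well-definedness on edges only make explicit what the paper's short proof leaves implicit.
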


\begin{proof}
For vertex colorings, the upper bound is the pigeonhole principle on the
clique $C$.  If $r\le \sum_i(k_i-1)$, color the labels in $C$ so that color
$i$ receives at most $k_i-1$ labels, and color each vertex $v$ by the color of
$\lambda(v)$.  A monochromatic clique in $H$ injects into labels of the same
color, so it has size at most $k_i-1$.

For edge colorings, the upper bound is the classical Ramsey theorem applied to
the clique $C$.  If $r<\Rcl(k_1,\ldots,k_c)$, choose a classical
$c$-edge-coloring of $K_r$ with no forbidden monochromatic clique, and color
each edge $\{u,v\}$ of $H$ by the color of the label edge
$\{\lambda(u),\lambda(v)\}$.  The labels are distinct on every edge, and a
monochromatic clique of $H$ would inject into a monochromatic clique of
$K_r$, a contradiction.
\end{proof}

\Cref{prop:rank-transfer} is the formal point-edge unification used below.
The next proposition records how far the same transfer extends when the edge
target is not necessarily a clique.

\begin{proposition}[Substructure transfer and its boundary]
\label{prop:substructure-transfer}
Let $\mathcal F_1,\ldots,\mathcal F_c$ be finite graph families, and let
$R_{\mathrm{cl}}(\mathcal F_1,\ldots,\mathcal F_c)$ be the least $R$ such
that every $c$-edge-coloring of $K_R$ contains, for some color $i$, a
color-$i$ copy of a graph in $\mathcal F_i$.  If $H$ has a clique-label
certificate of rank $r$ and
\[
  r\ge R_{\mathrm{cl}}(\mathcal F_1,\ldots,\mathcal F_c),
\]
then every $c$-edge-coloring of $H$ contains such a monochromatic
substructure.  If every graph in every $\mathcal F_i$ is complete, this
condition is also necessary.
\end{proposition}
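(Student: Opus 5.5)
The plan has two parts: the sufficiency direction, which only uses the clique $C$, and the necessity direction for complete families, which reduces to \Cref{prop:rank-transfer} via the label map $\lambda$.

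\textbf{Sufficiency.} Fix any $c$-edge-coloring of $H$. Since $C$ is a clique of $H$ with $|C|=r$, restricting the coloring to the edges inside $C$ gives a $c$-edge-coloring of $K_r$. Now $r\ge R_{\mathrm{cl}}(\mathcal F_1,\ldots,\mathcal F_c)$. Because monochromatic substructures persist under passing to a larger complete graph, every coloring of $K_r$ contains, for some $i$, a color-$i$ copy of a member of $\mathcal F_i$. Any subgraph of $H[C]$ is a subgraph of $H$, so that copy lives in $H$ with the same colors. This direction needs no labels at all; the rank bound is used only through the clique.

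\textbf{Necessity when each $\mathcal F_i$ consists of complete graphs.} Here I would argue by contrapositive, imitating the edge half of \Cref{prop:rank-transfer}. Suppose $r<R_{\mathrm{cl}}(\mathcal F_1,\ldots,\mathcal F_c)$. By minimality of $R_{\mathrm{cl}}$, there is a $c$-edge-coloring $\psi$ of $K_r$, on vertex set $C$, with no color-$i$ copy of any member of $\mathcal F_i$. A small subtlety: minimality gives a bad coloring of $K_{R-1}$, and restricting it to $r\le R-1$ vertices is still bad, since copies cannot appear in a subgraph. Color each edge $uv$ of $H$ by $\psi(\{\lambda(u),\lambda(v)\})$. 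This is well defined because adjacent vertices receive distinct labels.

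Now suppose $H$ contained a color-$i$ copy of some $K_t\in\mathcal F_i$. Its vertex set is a clique of $H$, so $\lambda$ is injective on it. The images therefore form $t$ distinct labels whose connecting edges in $K_r$ all have color $i$ under $\psi$, which is a color-$i$ $K_t$ and a contradiction. Hence the condition $r\ge R_{\mathrm{cl}}$ is necessary.

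\textbf{Where completeness is used.} The main point to check is exactly where completeness of the targets enters, since that is the ``boundary'' the statement advertises. For a noncomplete target $F$, a copy of $F$ in $H$ need not span a clique, so $\lambda$ may identify two nonadjacent vertices of the copy. The image could then be a homomorphic image rather than a copy of $F$, and the pullback argument would break. For cliques, the vertex set of any copy is a clique, and injectivity of $\lambda$ on cliques is exactly what the certificate supplies. I would state this explicitly, so the proof makes clear why only the necessity half is restricted to complete families.
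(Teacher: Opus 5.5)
Your proof is correct and follows the paper's argument essentially step for step. Sufficiency restricts the coloring to the rank-$r$ clique $C$, and necessity for complete targets pulls back an avoiding coloring of $K_r$ via $\lambda(u)\lambda(v)$, using injectivity of $\lambda$ on cliques; your extra checks (restricting the bad coloring of $K_{R-1}$ to $r$ vertices, monotonicity in $r$, and well-definedness of the pullback on edges) are details the paper leaves implicit.
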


\begin{proof}
The upper bound again restricts the coloring to the rank-$r$ clique $C$.
For necessity in the complete-target case, suppose
$r<R_{\mathrm{cl}}(\mathcal F_1,\ldots,\mathcal F_c)$ and choose an avoiding
coloring of $K_r$.  Pull it back to $H$ by coloring $uv$ according to
$\lambda(u)\lambda(v)$.  Any forbidden complete target in $H$ is a clique,
so its labels are distinct and form the same forbidden target in $K_r$,
a contradiction.
\end{proof}

The completeness assumption is the exact boundary of the transfer.  For
paths, cycles, trees, and other non-complete targets, the label map need not
be injective on non-adjacent vertices of a witness.  Thus the prime clique
still gives a classical-Ramsey upper bound, but the pullback lower bound can
be strict.  The same proof also applies to fixed color-pattern conditions on
complete witnesses, such as the Gallai condition ``monochromatic or rainbow
triangle'', because the witness vertices again form a clique.

For the coprime graph $G_n$, take
\[
  C_n=\{1\}\cup\{p\le n:p\text{ prime}\},\qquad r(n)=\pi(n)+1,
\]
and let $\lambda(1)=1$ while $\lambda(m)$ is any chosen prime divisor of
$m>1$.  Pairwise coprime integers cannot share a chosen prime divisor, so
$\lambda$ is injective on every coprime clique.  Thus the vertex threshold is
the rank condition $r(n)\ge 1+\sum_i(k_i-1)$, while the edge threshold is the
rank condition $r(n)\ge\Rcl(k_1,\ldots,k_c)$.  The resulting integer thresholds
are exactly the prime-index formulas of \Cref{thm:main,thm:edge-reduction}.
\Cref{tab:unified-rank-threshold} displays the two rank triggers side by side.

\begin{table}[htbp]
\centering
\caption{The same clique-label certificate gives different rank triggers:
pigeonhole for vertex colorings and classical Ramsey for edge colorings.}
\label{tab:unified-rank-threshold}
\small
\begin{tabular}{@{}>{\raggedright\arraybackslash}p{0.22\linewidth}>{\raggedright\arraybackslash}p{0.26\linewidth}rr@{}}
\toprule
Formulation & base threshold on $K_r$ & rank trigger & coprime threshold \\
\midrule
$\Rcop(3;2)$ & $1+(2 + 2)=5$ & 5 & $p_{4}=7$ \\
$\Rcop(4;2)$ & $1+(3 + 3)=7$ & 7 & $p_{6}=13$ \\
$\Rcop(3;3)$ & $1+(2 + 2 + 2)=7$ & 7 & $p_{6}=13$ \\
$\Rcop(3,5)$ & $1+(2 + 4)=7$ & 7 & $p_{6}=13$ \\
$\Redge(3;2)$ & $R=6$ & 6 & $p_{5}=11$ \\
$\Redge(3;3)$ & $R=17$ & 17 & $p_{16}=53$ \\
$\Redge(4;2)$ & $R=18$ & 18 & $p_{17}=59$ \\
$\Redge(3,4)$ & $R=9$ & 9 & $p_{8}=19$ \\
$\Redge(3,5)$ & $R=14$ & 14 & $p_{13}=41$ \\
\bottomrule
\end{tabular}

\end{table}

\begin{corollary}[Scaled gcd-$d$ edge-clique variant]
\label{cor:gcd-scaling}
Let $G_{n,d}$ have vertex set $[n]$ and edges $ab$ exactly when
$\gcd(a,b)=d$.  The edge-coloring clique Ramsey threshold on $G_{n,d}$ is
\[
  R_{\gcd=d}^{\mathrm{edge}}(k_1,\ldots,k_c)
  =d\,p_{\Rcl(k_1,\ldots,k_c)-1}.
\]
\end{corollary}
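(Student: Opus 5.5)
Write $R=\Rcl(k_1,\ldots,k_c)$. The plan is to reduce \Cref{cor:gcd-scaling} to \Cref{thm:edge-reduction} by the scaling map $x\mapsto dx$. Set $m=\lfloor n/d\rfloor$.

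\textbf{Step 1: structure of $G_{n,d}$.} Any vertex incident to an edge is a multiple of $d$, since $\gcd(a,b)=d$ forces $d\mid a$ and $d\mid b$. Writing $a=da'$ and $b=db'$ with $a',b'\in[m]$, we have $\gcd(a,b)=d$ if and only if $\gcd(a',b')=1$. Hence $G_{n,d}$ is isomorphic to $G_m$ together with $n-m$ isolated vertices, via $da'\mapsto a'$.

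\textbf{Step 2: isolated vertices do not matter.} Isolated vertices carry no edges and lie in no clique of size at least two. Since $k_i\ge2$, every $c$-edge-coloring of $G_{n,d}$ is just a $c$-edge-coloring of its copy of $G_m$. So monochromatic $K_{k_i}$'s are forced in $G_{n,d}$ exactly when they are forced in $G_m$. By \Cref{thm:edge-reduction}, this happens precisely when $m\ge p_{R-1}$; monotonicity in $m$ follows from that theorem's characterization.

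\textbf{Step 3: translate back to $n$.} Because $p_{R-1}$ is an integer,
\[
  \lfloor n/d\rfloor\ge p_{R-1}\iff n\ge d\,p_{R-1}.
\]
The least forcing $n$ is therefore $d\,p_{R-1}$. For the explicit certificate at $n=d\,p_{R-1}$, the scaled prime clique $\{d\}\cup\{dp: p\le p_{R-1}\}$ has pairwise gcd exactly $d$ and contains $R$ vertices.

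\textbf{Main obstacle.} The only delicate point is the convention: \Cref{cor:gcd-scaling} counts all of $[n]$ as vertices, but the isolated non-multiples of $d$ are irrelevant. The floor step then needs the integrality of $p_{R-1}$, and this is automatic.
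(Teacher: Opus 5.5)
Your proposal is correct and follows essentially the same route as the paper. The paper's proof likewise restricts attention to multiples of $d$, identifies that part of $G_{n,d}$ with $G_{\lfloor n/d\rfloor}$ via $dm\mapsto m$, and reads off the threshold $d\,p_{\Rcl(k_1,\ldots,k_c)-1}$ from \Cref{thm:edge-reduction}; your treatment of the isolated non-multiples, the monotonicity in $m$, and the floor-to-integer step only makes explicit what the paper leaves implicit.
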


\begin{proof}
Vertices in a nontrivial edge of $G_{n,d}$ are multiples of $d$, and the map
$dm\mapsto m$ identifies the non-isolated part of $G_{n,d}$ with
$G_{\lfloor n/d\rfloor}$.  The least $n$ for which this scaled copy reaches
the edge-coprime threshold of \Cref{thm:edge-reduction} is therefore
$d\,p_{\Rcl(k_1,\ldots,k_c)-1}$.
\end{proof}

The scaled gcd variant in \Cref{cor:gcd-scaling} is a simple example where
the same transfer survives after an arithmetic rescaling of the host graph.

\begin{corollary}[Uniform hypergraph vertex-coloring analogue]
\label{cor:hypergraph-vertex}
Fix $t\ge2$.  Let $G_n^{(t)}$ be the $t$-uniform hypergraph whose hyperedges
are the $t$-element pairwise-coprime subsets of $[n]$.  For $k_i\ge t$, define
the vertex-coloring threshold by asking for a monochromatic complete
$t$-uniform hypergraph on $k_i$ vertices in some color $i$.  Then
\[
  R_{\mathrm{cop}}^{(t)\text{-vertex}}(k_1,\ldots,k_c)
  =p_{\sum_i(k_i-1)}.
\]
\end{corollary}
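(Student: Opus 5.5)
The key observation is that a complete $t$-uniform hypergraph on a vertex set $S$ inside $G_n^{(t)}$ is the same thing as a pairwise coprime set $S$ of size at least $t$. Indeed, if every $t$-subset of $S$ is pairwise coprime and $|S|\ge t\ge2$, then any two elements of $S$ lie in a common $t$-subset and so are coprime. Conversely, a pairwise coprime $S$ has all its $t$-subsets as hyperedges. Since $k_i\ge t$, the event ``color $i$ contains a complete $t$-uniform hypergraph on $k_i$ vertices'' is therefore exactly the event ``color $i$ contains $k_i$ pairwise coprime integers''. So the vertex-coloring threshold for $G_n^{(t)}$ coincides with the one for $G_n$. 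I would record this equivalence as the first step and then reduce directly to \Cref{thm:main}.

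For safety I would also write out both directions explicitly. For the upper bound, take $n=p_M$ with $M=\sum_i(k_i-1)$. The prime clique $\{1\}\cup\{p\le n\}$ has $M+1$ pairwise coprime elements, so by pigeonhole some color $i$ receives at least $k_i$ of them. These $k_i\ge t$ elements span a complete $t$-uniform hypergraph in color $i$.

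For the lower bound, take $n<p_M$ and reuse the prime-bin coloring from the proof of \Cref{thm:main}. Any monochromatic complete $t$-uniform hypergraph on $k_i$ vertices would be a pairwise coprime set of size $k_i$ in color $i$, which that coloring excludes.

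The only point needing care is the case $t$ versus $k_i$, in particular $k_i=t$, where the target is a single hyperedge. The reduction still holds there, because one hyperedge is by definition a $t$-element pairwise coprime set. The hypothesis $k_i\ge t$ is exactly what makes the target nonvacuous and the equivalence valid. I expect no real obstacle beyond stating this equivalence cleanly.
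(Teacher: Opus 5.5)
Your proposal is correct and matches the paper's proof: both show that, for $k_i\ge t\ge2$, a complete $t$-uniform witness on $k_i$ vertices is exactly a set of $k_i$ pairwise coprime integers, since any two vertices lie in a common hyperedge. Both then reuse the prime-clique pigeonhole bound and the prime-bin coloring from \Cref{thm:main}; your explicit write-up of the two directions only expands the paper's one-line appeal to that theorem.
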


\begin{proof}
For $k_i\ge t$, a $k_i$-vertex complete subhypergraph of $G_n^{(t)}$ is
equivalent to a $k_i$-set of pairwise coprime integers: pairwise coprimality
clearly gives every $t$-edge, and any two vertices in a complete
$t$-uniform witness lie together in some hyperedge.  Thus the monochromatic
witnesses are exactly the same pairwise-coprime vertex sets as in
\Cref{thm:main}, so the same prime clique and prime-bin coloring give the
same threshold.
\end{proof}

\begin{corollary}[Uniform hypergraph edge-coloring analogue]
\label{cor:hypergraph-edge}
Fix $t\ge2$.  Let $G_n^{(t)}$ be the $t$-uniform hypergraph whose hyperedges
are the $t$-element pairwise-coprime subsets of $[n]$.  For $k_i\ge t$, if
$R_{\mathrm{cl}}^{(t)}(k_1,\ldots,k_c)$ denotes the classical $t$-uniform
complete-hypergraph Ramsey number, then the hyperedge-coloring threshold is
\[
  R_{\mathrm{cop}}^{(t)\text{-edge}}(k_1,\ldots,k_c)
  =p_{R_{\mathrm{cl}}^{(t)}(k_1,\ldots,k_c)-1}.
\]
\end{corollary}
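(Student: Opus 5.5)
The plan is to run the clique-label transfer of \Cref{prop:rank-transfer} one dimension up, with the prime clique $C_n=\{1\}\cup\{p\le n:p\text{ prime}\}$ and the label map $\lambda(1)=1$, $\lambda(m)=$ a chosen prime divisor of $m>1$. The argument follows the proof of \Cref{thm:edge-reduction} nearly verbatim, with $K_R$ replaced by the complete $t$-uniform hypergraph $K_R^{(t)}$. Throughout put $R=R_{\mathrm{cl}}^{(t)}(k_1,\ldots,k_c)$; this is finite by the hypergraph Ramsey theorem, and $R\ge t$ because every $k_i\ge t$.

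\textbf{Upper bound.} Take $n=p_{R-1}$. Then $C_n$ has $\pi(n)+1=R$ vertices. Any $t$-subset of $C_n$ is pairwise coprime and hence a hyperedge of $G_n^{(t)}$, so $G_n^{(t)}$ restricted to $C_n$ is the complete $t$-uniform hypergraph on $R$ vertices. A hyperedge-coloring of $G_n^{(t)}$ restricts to a $c$-coloring of $K_R^{(t)}$. By the definition of $R$, this restriction contains some $k_i$-set all of whose $t$-subsets have color $i$. That set is a monochromatic complete subhypergraph in $G_n^{(t)}$, so $R_{\mathrm{cop}}^{(t)\text{-edge}}\le p_{R-1}$.

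\textbf{Lower bound.} Take $n<p_{R-1}$, so $r=|C_n|=\pi(n)+1\le R-1$. Fix a $c$-coloring $\phi$ of $K_{R-1}^{(t)}$ with no color-$i$ complete $k_i$-set, and restrict it to an $r$-subset identified with $C_n$. Now let $e$ be a hyperedge of $G_n^{(t)}$. Its elements are pairwise coprime, so, as in \Cref{lem:max-clique}, $\lambda$ is injective on $e$: distinct non-one integers that are coprime cannot share a chosen prime divisor, and only $1$ carries the label $1$. Hence $\lambda(e)$ is a $t$-subset of $C_n$, and we color $e$ by $\phi(\lambda(e))$.

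Suppose some $k_i$-set $S$ spanned a monochromatic complete subhypergraph of color $i$. Because $k_i\ge t$, the argument of \Cref{cor:hypergraph-vertex} applies: any two vertices of $S$ lie in a common hyperedge, so $S$ is pairwise coprime. Therefore $\lambda$ is injective on $S$, and $\lambda(S)$ is a $k_i$-set whose $t$-subsets are exactly the images $\lambda(e)$ for $t$-subsets $e\subseteq S$. All of these have color $i$ under $\phi$, which contradicts the choice of $\phi$. Hence $R_{\mathrm{cop}}^{(t)\text{-edge}}>n$ for every $n<p_{R-1}$.

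\textbf{Main obstacle.} The only delicate step is well-definedness of the pulled-back coloring: $\lambda$ must be injective on each hyperedge, and a monochromatic witness must map onto a full $k_i$-set. Both facts follow from pairwise coprimality of hyperedges and witnesses. The witness case is exactly where the hypothesis $k_i\ge t$ is used.
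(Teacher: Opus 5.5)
Your proposal is correct and follows the paper's proof: the prime clique restricts to a complete $t$-uniform hypergraph on $R$ vertices for the upper bound, and for the lower bound a classical avoiding coloring of $K^{(t)}_{R-1}$ is pulled back through the prime-divisor labels. You also spell out a step the paper's terse proof leaves implicit: $k_i\ge t$ makes every monochromatic witness pairwise coprime, so the labels are injective on it.
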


\begin{proof}
The prime-label clique is a complete $t$-uniform hypergraph of rank
$\pi(n)+1$.  Below the stated threshold, pull back a classical avoiding
coloring of the complete $t$-uniform hypergraph on the label set.  The labels
are injective on every pairwise-coprime vertex set, so a monochromatic
complete $t$-uniform hypergraph in $G_n^{(t)}$ would give one on the labels.
\end{proof}

\Cref{cor:hypergraph-vertex,cor:hypergraph-edge} show that the same
certificate language also covers complete uniform-hypergraph witnesses.
\Cref{tab:coprime-ramsey-hierarchy} summarizes which variants remain inside
the certificate regime and which ones are useful boundary tests.

\begin{table}[htbp]
\centering
\caption{Coprime Ramsey hierarchy exposed by the prime-label certificate.
The lower levels have closed formulas; higher levels are useful boundary
tests because they deliberately remove or constrain part of the certificate.}
\label{tab:coprime-ramsey-hierarchy}
\small
\begin{tabular}{@{}>{\raggedright\arraybackslash}p{0.13\linewidth}>{\raggedright\arraybackslash}p{0.28\linewidth}>{\raggedright\arraybackslash}p{0.46\linewidth}@{}}
\toprule
Level & Governing mechanism & Status in this paper \\
\midrule
0 & Prime-clique rank & $\omega(G_n)=\pi(n)+1$ and $\chi(G_n)=\pi(n)+1$ give the $k=2$ base case. \\
1 & Prime-bin packing & Mixed vertex colorings, including the pairwise-coprime uniform hypergraph vertex analogue, collapse to $\Rcop(k_1,\ldots,k_c)=p_{\sum_i(k_i-1)}$. \\
2 & Complete-witness Ramsey pullback & Clique, Gallai-clique, scaled gcd-$d$, and hypergraph-clique edge targets are prime-index transfers. \\
3 & Density-constrained certificates & Two-color balance is exact; multicolor balance has finite defects and an eventual certificate regime. \\
4 & Local arithmetic dependence & Shifted intervals lose vertex $1$ and depend on local prime-factor structure. \\
\bottomrule
\end{tabular}

\end{table}

\section{Numerical Values and Computational Context}
\label{sec:computational-context}

The theorem gives all values, so finite SAT search is not needed for the
vertex-coloring problem.  \Cref{tab:values} gives the beginning of the
two-color diagonal sequence, and \Cref{fig:growth} compares it with the
first-order scale $2k\log k$.

\begin{table}[htbp]
\centering
\caption{Exact two-color diagonal vertex-coprime Ramsey numbers.}
\label{tab:values}
\begin{tabular}{rrrr}
\toprule
$k$ & prime index & $R_{\mathrm{cop}}(k;2)$ & $R/(k\log k)$ \\
\midrule
2 & 2 & 3 & 2.164 \\
3 & 4 & 7 & 2.124 \\
4 & 6 & 13 & 2.344 \\
5 & 8 & 19 & 2.361 \\
6 & 10 & 29 & 2.698 \\
7 & 12 & 37 & 2.716 \\
8 & 14 & 43 & 2.585 \\
9 & 16 & 53 & 2.680 \\
10 & 18 & 61 & 2.649 \\
11 & 20 & 71 & 2.692 \\
12 & 22 & 79 & 2.649 \\
13 & 24 & 89 & 2.669 \\
14 & 26 & 101 & 2.734 \\
15 & 28 & 107 & 2.634 \\
16 & 30 & 113 & 2.547 \\
17 & 32 & 131 & 2.720 \\
18 & 34 & 139 & 2.672 \\
19 & 36 & 151 & 2.699 \\
20 & 38 & 163 & 2.721 \\
21 & 40 & 173 & 2.706 \\
\bottomrule
\end{tabular}

\end{table}

\begin{figure}[htbp]
\centering
\includegraphics[width=0.76\linewidth]{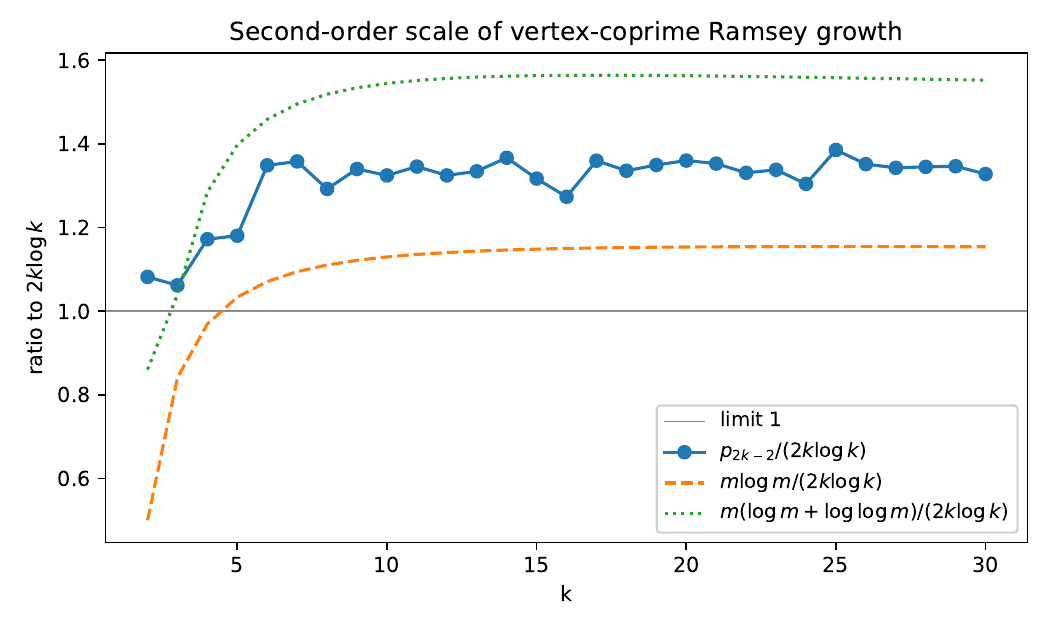}
\caption{Ratio plot for $\Rcop(k;2)=p_{2k-2}$ against the baseline
$2k\log k$.  The second-order prime-number-theorem term explains the visible
gap beyond the first-order scale.}
\label{fig:growth}
\end{figure}

The value $\Rcop(10;2)$ is $p_{18}=61$.  The extremal coloring at
$n=60$ is obtained by placing $1$ and the first eight primes in one bin,
placing the remaining nine primes $\le 60$ in the other bin, and coloring
each composite by a bin containing one of its prime divisors.  This shows
directly that $n=60$ is still avoidable, while the prime clique at $n=61$
has $19$ vertices and forces ten vertices in one of two colors.

This corrects the superseded exploratory value $53$ for $\Rcop(10;2)$; the SAT
encoding issue behind that false lead is discussed in \Cref{app:sat-forensics}.

\subsection{From Large SAT Formulas to a Small Certificate}

A useful way to understand the result is to compare the direct computational
encoding with the proof certificate.  A direct two-color SAT encoding has one
Boolean variable per vertex-color pair and two anti-monochromatic clauses for
every coprime $k$-clique.  At the exact threshold, the formula quickly becomes
large even though the proof of unsatisfiability uses only the prime clique.

\begin{table}[htbp]
\centering
\caption{Direct SAT scale at the exact threshold compared with the prime-clique certificate.}
\label{tab:structural-diagnostics}
\small
\begin{tabular}{rrrrrr}
\toprule
$k$ & $R_{\mathrm{cop}}(k;2)$ & cert. rank & all $K_k$ & prime-clique $K_k$ & SAT clauses \\
\midrule
3 & 7 & 5 & 19 & 10 & 52 \\
4 & 13 & 7 & 151 & 35 & 328 \\
5 & 19 & 9 & 831 & 126 & 1,700 \\
6 & 29 & 11 & 7,803 & 462 & 15,664 \\
7 & 37 & 13 & 42,708 & 1,716 & 85,490 \\
8 & 43 & 15 & 186,945 & 6,435 & 373,976 \\
9 & 53 & 17 & 1,280,587 & 24,310 & 2,561,280 \\
10 & 61 & 19 & 6,237,154 & 92,378 & 12,474,430 \\
\bottomrule
\end{tabular}

\end{table}

\Cref{tab:structural-diagnostics} is not needed for the theorem, but it
explains why the computational path is misleading.  From $k=9$ to $k=10$,
the direct encoding grows from $1{,}280{,}587$ to $6{,}237{,}154$ coprime
$K_k$ constraints, while the certificate rank grows only from $17$ to $19$.
For $k=10$, the direct encoding has over $12$ million clauses, whereas the
mathematical proof only asks how a two-coloring splits the $19$ vertices of
the prime clique at $n=61$.  The apparent combinatorial explosion is real for
brute-force encodings, but irrelevant once the prime-factor certificate is
exposed.

The clique-label viewpoint in \Cref{subsec:clique-label} is also easy to
check directly.  Using the least prime divisor as the label, we checked on
finite instances that every coprime edge receives two distinct labels for the values in
\Cref{tab:clique-label-check}.  Since every pair of vertices in a clique is
adjacent, this no-collision edge property is exactly the local condition that
makes the label map injective on cliques.  The finite table is not a proof
ingredient; it is a reproducibility diagnostic for the mechanism behind both
the vertex and edge reductions.

\begin{table}[htbp]
\centering
\caption{Finite check of the no-label-collision property for $G_n$.  A collision
would mean a coprime edge whose endpoints receive the same prime label; none
occur in the checked range.  For cliques, checking all adjacent pairs is the
same local condition as label injectivity.}
\label{tab:clique-label-check}
\small
\begin{tabular}{@{}rrrrr@{}}
\toprule
$n$ & rank $r=\pi(n)+1$ & coprime edges checked & edge label collisions & result \\
\midrule
10 & 5 & 31 & 0 & pass \\
30 & 11 & 277 & 0 & pass \\
60 & 18 & 1101 & 0 & pass \\
100 & 26 & 3043 & 0 & pass \\
250 & 54 & 19023 & 0 & pass \\
500 & 96 & 76115 & 0 & pass \\
1000 & 169 & 304191 & 0 & pass \\
2000 & 304 & 1216587 & 0 & pass \\
5000 & 670 & 7600457 & 0 & pass \\
\bottomrule
\end{tabular}

\end{table}

\section{Exact Balanced Two-Color Endpoint}
\label{sec:balanced}

The lower-bound coloring in the proof is deliberately simple: put $1$ in the
first color, split the primes into bins, and color each composite by a bin
containing one of its prime divisors.  This canonical witness can be highly
unbalanced.  At $k=10$ and $n=60$, for example, it colors $51$ vertices one
way and only $9$ the other way; \Cref{tab:prime-bin-imbalance} records this
diagnostic over a wider range.

\begin{table}[htbp]
\centering
\caption{Imbalance of the canonical prime-bin coloring at $n=\Rcop(k;2)-1$.}
\label{tab:prime-bin-imbalance}
\begin{tabular}{rrrrr}
\toprule
$k$ & extremal $n$ & color sizes & imbalance & minority fraction \\
\midrule
2 & 2 & 1:1 & 0 & 0.500 \\
3 & 6 & 4:2 & 2 & 0.333 \\
4 & 12 & 9:3 & 6 & 0.250 \\
5 & 18 & 14:4 & 10 & 0.222 \\
6 & 28 & 23:5 & 18 & 0.179 \\
7 & 36 & 30:6 & 24 & 0.167 \\
8 & 42 & 35:7 & 28 & 0.167 \\
9 & 52 & 44:8 & 36 & 0.154 \\
10 & 60 & 51:9 & 42 & 0.150 \\
11 & 70 & 60:10 & 50 & 0.143 \\
12 & 78 & 67:11 & 56 & 0.141 \\
13 & 88 & 76:12 & 64 & 0.136 \\
\bottomrule
\end{tabular}

\end{table}

This raises a natural objection: perhaps the theorem is an artifact of
allowing very unbalanced color classes.  To test this, we solved the following
near-balanced mixed-integer linear programming (MILP) model for small $k$:
find a two-coloring with
$|\,|\chi^{-1}(0)|-|\chi^{-1}(1)|\,|\le 1$ and with no monochromatic
coprime $K_k$.  For every coprime $k$-clique $K$, the model adds
\[
  1 \le \sum_{v\in K} x_v \le k-1,
\]
plus the near-balance constraint.  These diagnostics, recorded in
\Cref{app:balanced}, suggested that composite vertices provide enough
flexibility to rebalance a prime-bin certificate without creating a
monochromatic coprime clique.  The decisive pattern is the deterministic split
\[
  B_0=\{3,5,\ldots,p_{k-1}\},\qquad
  B_1=\{2,p_k,p_{k+1},\ldots,p_{2k-3}\}
\]
and it has an elementary exact analysis.

Let $L_{\mathrm{bal}}(k;2)$ be the largest $n$ for which there exists a
two-coloring of $[n]$ with color classes differing by at most one and with no
monochromatic coprime $K_k$.

\begin{lemma}[Weak prime-index gaps]
\label{lem:prime-index-gaps}
For every $m\ge2$,
\[
  2p_m<p_{2m}<3p_m .
\]
\end{lemma}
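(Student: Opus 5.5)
The two inequalities are really statements about $\pi$ on short intervals. The left one, $p_{2m}>2p_m$, says the interval $(p_m,2p_m]$ contains at most $m-1$ primes, i.e.\ $\pi(2p_m)\le 2m-1$. The right one, $p_{2m}<3p_m$, says the interval $(p_m,3p_m)$ contains at least $m$ primes, i.e.\ $\pi(3p_m)\ge 2m$ (note $3p_m$ is not prime). I plan to prove both from explicit Rosser--Schoenfeld/Dusart-type bounds on $\pi(x)$ and $p_m$ for large $m$, and to cover the remaining small $m$ by direct computation.

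For the left inequality I first try an elementary counting argument that needs no analytic input. Among the integers in $(p_m,2p_m]$, the primes are odd and, for $p_m\ge 5$, not divisible by $3$. An interval of length $p_m$ contains at most $\lceil p_m/3\rceil$ integers coprime to $6$. This gives $\pi(2p_m)-m\le p_m/3+1$, so it suffices to have $p_m/3+1\le m-1$. That fails because $p_m\gg m$, so the elementary route is too weak. I will instead use the explicit bound $\pi(2x)-\pi(x)\le x/\log x\cdot(1+O(1/\log x))$, for example from $\pi(x)<\frac{x}{\log x}(1+\frac{1.2762}{\log x})$ together with $\pi(x)>\frac{x}{\log x}$ for $x\ge17$. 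Putting $x=p_m$ and $m=\pi(p_m)\ge p_m/\log p_m$, the difference $\pi(2x)-\pi(x)$ is about $x/\log x$ minus a secondary term of size $2x\log2/\log^2x$, which keeps it strictly below $m$. Once the constants are checked, this holds for all $x$ beyond some explicit bound $X_0$. A cleaner alternative would be a known corollary of the Brun--Titchmarsh inequality in Montgomery--Vaughan's form, $\pi(x+y)-\pi(x)<2y/\log y$, but that is too weak here because of the factor $2$, so I commit to the explicit $\pi$ bounds.

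For the right inequality I use $\pi(3x)\ge \frac{3x}{\log 3x}$ and $\pi(x)\le \frac{x}{\log x}(1+\frac{1.2762}{\log x})$. Since $3/\log(3x)$ is asymptotically three times $1/\log x$, we get $\pi(3x)\ge 2\pi(x)$ once $x$ is moderately large. Taking $x=p_m$ gives $\pi(3p_m)\ge 2m$, hence $p_{2m}\le 3p_m$, and the inequality is strict because $3p_m$ is composite. Alternatively, applying Bertrand's postulate (that $(y,2y)$ always contains a prime) repeatedly only yields about $\log$-many primes, so it is insufficient; the explicit PNT bounds are the right tool.

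Finally, for $2\le m< M_0$, where $p_{M_0}$ exceeds the threshold $X_0$ from the analytic step, I will verify both inequalities by listing primes. For example, $m=2$ gives $p_4=7$ and $3<7<9$, which is tight-ish, and $m=3$ gives $p_6=13$ and $10<13<15$. The main obstacle is making the constants in the left inequality explicit with a reasonably small $X_0$: the margin there is only a second-order term, of relative size $1/\log x$. If the explicit bounds only take effect beyond a large $X_0$ (say $10^5$), the finite check is still a routine computer verification, which fits the paper's reproducibility-script approach.
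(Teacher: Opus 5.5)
Your architecture matches the paper's: explicit Dusart/Rosser--Schoenfeld bounds for large $p_m$, then a finite computer check for small $m$. Your right-hand inequality also goes through with the bounds you name. With $L=\log x$ it reduces to $3L^2\ge 2(L+\log 3)(L+1.2762)$, which holds once $L\gtrsim 5.3$. The gap is in the left inequality: the pair of bounds you commit to cannot prove it for any threshold $X_0$. Put $\ell=\log 2$. The comparison you need, $\pi(2x)<2\pi(x)$ (equivalently $\pi(2x)-\pi(x)<m$), would follow from
\[
  \frac{2x}{L+\ell}\Bigl(1+\frac{1.2762}{L+\ell}\Bigr)\le\frac{2x}{L}.
\]
This is equivalent to $(1.2762-\ell)L\le\ell^2$, i.e.\ $L\le 0.82$, so it fails for every $x\ge 3$.

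You correctly note that the true margin $2\pi(x)-\pi(2x)$ is only of size about $2\ell x/L^2$. However, your bounds disagree by more than that at second order. Write the upper and lower bounds as $\frac{y}{\log y}(1+\frac{a}{\log y})$ and $\frac{y}{\log y}(1+\frac{b}{\log y})$. Then the leading coefficient of the difference of the two sides is $b+\ell-a$, so you need $a-b<\log 2$, whereas you have $a-b=1.2762$. Your heuristic sentence hides this by replacing $m=\pi(p_m)$ with $p_m/\log p_m$. That substitution discards exactly the $x/L^2$ term of $m$ that the argument needs.

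The repair is a lower bound with the correct second-order term. For instance, Dusart's $\pi(x)\ge\frac{x}{\log x}\bigl(1+\frac{1}{\log x}\bigr)$ for $x\ge 599$, combined with your upper bound, gives $a-b=0.2762<\log 2$. The needed inequality $L^2(L+\ell+1.2762)<(L+\ell)^2(L+1)$ then holds for all $L>0$, because the difference is $(\ell-0.2762)L^2+(2\ell+\ell^2)L+\ell^2$. So the analytic step covers every $p_m\ge 599$, and the rest is a short finite check.

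The paper does the same thing with Dusart's $\frac{x}{\log x-1}<\pi(x)<\frac{x}{\log x-1.1}$, valid for $x\ge 60184$. Its second-order constants differ by only $0.1$, which gives the one-line chain $\pi(2x)<\frac{2x}{\log(2x)-1.1}<\frac{2x}{\log x-1}<2\pi(x)$ using only $\log 2>0.1$. The paper then checks $m\le 6076$ by computer. One minor slip: for $m=2$ the check reads $6<7<9$, not $3<7<9$.
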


\begin{proof}
We use this only in a weak form.  The displayed prime-counting estimates in
Dusart's preprint \cite[Theorem~6.9]{dusart2010} give
\[
  \frac{x}{\log x-1}<\pi(x)\quad (x\ge5393),
  \qquad
  \pi(x)<\frac{x}{\log x-1.1}\quad (x\ge60184).
\]
Put $x=p_m$.  If $x\ge60184$, then
\[
  \pi(2x)<\frac{2x}{\log(2x)-1.1}
  <\frac{2x}{\log x-1}<2\pi(x)=2m,
\]
so $p_{2m}>2p_m$.  Also
\[
  \pi(3x)>\frac{3x}{\log(3x)-1}
  >\frac{2x}{\log x-1.1}>2\pi(x)=2m,
\]
because $\log x>1.3+2\log 3$ in this range; hence $p_{2m}<3p_m$.
The remaining finite range $2\le m\le \pi(60184)=6076$ is checked exactly by
direct computation.
\end{proof}

\begin{theorem}[Exact balanced endpoint]
\label{thm:balanced-exact}
For every $k\ge2$,
\[
  L_{\mathrm{bal}}(k;2)=p_{2k-2}-1.
\]
Equivalently, the balanced upper transition point is $p_{2k-2}$, the same as
the unrestricted two-color threshold.
\end{theorem}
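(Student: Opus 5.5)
The upper half of the statement follows directly from \Cref{thm:main}. For $n\ge p_{2k-2}$, every two-coloring of $[n]$ contains a monochromatic coprime $K_k$, since $\Rcop(k;2)=p_{2k-2}$. This holds in particular for balanced colorings, so $L_{\mathrm{bal}}(k;2)\le p_{2k-2}-1$. The content of the theorem is therefore the construction at $n=p_{2k-2}-1$ of a coloring whose two classes differ by at most one and which has no monochromatic coprime $K_k$. The case $k=2$ is checked by hand: color $1$ with color $0$ and $2$ with color $1$. From now on assume $k\ge3$.

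For the construction I would use the split displayed before \Cref{lem:prime-index-gaps}. The primes up to $n$ are $p_1,\ldots,p_{2k-3}$. Put $1$ and the bin $B_0=\{3,5,\ldots,p_{k-1}\}$, which has $k-2$ primes, in color $0$. Put $B_1=\{2,p_k,\ldots,p_{2k-3}\}$, which has $k-1$ primes, in color $1$. Every $m>1$ is colored by a bin containing some prime divisor of $m$. The injection argument from the proof of \Cref{thm:main} then applies to every such coloring, whatever choices are made: color-$0$ cliques have size at most $1+(k-2)=k-1$, and color-$1$ cliques have size at most $k-1$. So the only task is to choose the colors of the \emph{flexible} vertices, those with prime divisors in both bins, so that the classes balance.

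I would classify the vertices into three types.
\begin{itemize}
\item \emph{Forced into color $0$:} the vertex $1$ and the integers all of whose prime factors lie in $B_0$. These are all odd, so there are at most $\lceil n/2\rceil$ of them.
\item \emph{Forced into color $1$:} the integers whose prime factors all lie in $B_1$.
\item \emph{Flexible:} everything else, which may go into either class.
\end{itemize}

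To count the forced color-$1$ vertices I would use \Cref{lem:prime-index-gaps} with $m=k-1$, which gives $n<p_{2k-2}<3p_{k-1}$. A large prime $p\ge p_k>p_{k-1}$ then satisfies $3p>n$, so its only multiples up to $n$ are $p$ and possibly $2p$. The product of two large primes is also too big. Hence the forced color-$1$ set consists of the powers of $2$ up to $n$, the $k-2$ large primes, and at most $k-2$ numbers of the form $2p$. In total this is at most $2k-3+\lfloor\log_2 n\rfloor$. Since every flexible vertex can be placed freely, the size of color class $0$ can be any integer in the interval $[F_0,\,n-F_1]$, where $F_0$ and $F_1$ are the numbers of forced color-$0$ and color-$1$ vertices. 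This interval contains $\lfloor n/2\rfloor$ or $\lceil n/2\rceil$ as soon as $F_0\le\lceil n/2\rceil$ and $F_1\le\lfloor n/2\rfloor$.

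The remaining obstacle is the inequality $2k-3+\lfloor\log_2 p_{2k-2}\rfloor\le\lfloor(p_{2k-2}-1)/2\rfloor$. For large $k$ it follows from $p_m>m\log m$, since then $n/2$ is of order $k\log k$, which dominates $2k+O(\log k)$. I would make the threshold explicit and check the finitely many small $k$ directly; for instance, $k=3$ gives $n=6$ with forced color-$1$ set $\{2,4,5\}$, so $F_1=3\le3$. If the inequality is tight or fails for some small $k$, the fallback is to exhibit the balanced coloring for that $k$ explicitly, as in the MILP diagnostics.
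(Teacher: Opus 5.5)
Your argument is correct. It uses the paper's split ($B_0=\{p_2,\ldots,p_{k-1}\}$, $B_1=\{p_1,p_k,\ldots,p_{2k-3}\}$) and the same forced/flexible classification, but the balancing step runs differently.

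The paper counts the forced color-$0$ side exactly, $|F_0|=x/2-(k-2)$, using $3p_k>p_{2k}>p_{2k-2}$. It then moves exactly the $k-2$ explicit flexible vertices $2p_2,\ldots,2p_{k-1}$ into color $0$; these lie in $[x]$ because $2p_{k-1}<p_{2k-2}$. So the paper uses both halves of \Cref{lem:prime-index-gaps}, no other prime estimate, and no further case analysis for $k\ge3$.

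You instead get $F_0\le n/2$ for free from parity, and you control $F_1$ by listing the $B_1$-smooth integers. That needs only the upper half $p_{2k-2}<3p_{k-1}$. It then needs a Rosser-type bound $p_m>m\log m$, plus something like monotonicity of $x-2\log_2 x$ to absorb the $\log_2 p_{2k-2}$ term, and a finite check. The finite check is genuinely needed: your displayed inequality is false for $k=3,4,5$, with left sides $5,8,11$ against right sides $3,6,9$. The direct counts $F_1=3,5,7$ do still satisfy $F_1\le n/2$ there, and the crude bound holds from $k=6$ on.

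In exchange, your route yields more. Every color-$0$ size in $[F_0,n-F_1]$ is realizable. Swapping the color names, which is legitimate in the diagonal case, gives every size in $[F_1,n-F_1]$ with $F_1\le 2k-4+\lfloor\log_2 n\rfloor$. That is a density window of relative width $1-O(1/\log k)$, much wider than the window $|r-x/2|\le k-2$ of \Cref{cor:density-window}. The paper's route buys a uniform, fully explicit coloring with an exact formula for $F_0$.
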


\begin{proof}
The upper bound follows from \Cref{thm:main}.  The case $k=2$ is immediate
(place $1$ and $2$ in opposite colors), so assume $k\ge3$ and put
$x=p_{2k-2}-1$.  Then $x$ is even and the primes at most $x$ are exactly
$p_1,\ldots,p_{2k-3}$.

\medskip\noindent\textit{The deterministic split.}
Let
\[
  B_0=\{p_2,\ldots,p_{k-1}\},\qquad
  B_1=\{p_1,p_k,\ldots,p_{2k-3}\},
\]
so $|B_0|=k-2$ and $|B_1|=k-1$.  We will produce a two-coloring with vertex
$1$ in color $0$ and with every other vertex $v$ assigned color $i$ having
at least one prime divisor in $B_i$.  Such a coloring is divisor-certified:
color $0$ has clique size at most $1+|B_0|=k-1$ (vertex $1$ plus injected
witness primes) and color $1$ has clique size at most $|B_1|=k-1$, so no
monochromatic coprime $K_k$ exists.

\medskip\noindent\textit{Vertices forced into color $0$.}
Write $\sigma(v)$ for the set of prime divisors of $v$, and define
\[
  F_0=\{1\}\cup\{v\in[2,x]:\sigma(v)\subseteq B_0\}.
\]
Vertices in $F_0$ must be colored $0$ to preserve the certificate.  Since
$B_0$ contains only odd primes, every $v\in F_0$ is odd.  Conversely, an odd
$v\in[1,x]$ lies in $F_0$ unless $v$ has a prime divisor $q\ge p_k$; if
additionally $v\ne q$, then $v$ has a second prime factor of size at least
$3$, so $v\ge 3q\ge 3p_k>p_{2k-2}=x+1$ by \Cref{lem:prime-index-gaps}, a
contradiction.  The odd vertices in $[1,x]$ outside $F_0$ are therefore
exactly the primes $p_k,\ldots,p_{2k-3}$, of which there are $k-2$.  Since
$[1,x]$ contains $x/2$ odd integers,
\[
  |F_0|=\frac{x}{2}-(k-2).
\]

\medskip\noindent\textit{Flexible vertices moved into color $0$.}
For $i=2,\ldots,k-1$, set $f_i=2p_i$.  By \Cref{lem:prime-index-gaps},
$2p_{k-1}<p_{2(k-1)}=p_{2k-2}=x+1$, so every $f_i\le x$.  The support
$\sigma(f_i)=\{2,p_i\}$ meets both bins: $2\in B_1$ and $p_i\in B_0$, so
$f_i$ may be colored either way without breaking the certificate.  Assign
these $k-2$ vertices to color $0$.

\medskip\noindent\textit{Assigning the remaining vertices.}
Color every remaining vertex with $1$.  Each such vertex either has support
in $B_1$ (forced into color $1$) or has support meeting $B_1$ (flexible,
not selected above); in both cases coloring it $1$ uses a prime in $B_1$,
so the certificate is preserved.

\medskip\noindent\textit{Balance.}
Color $0$ now has $|F_0|+(k-2)=x/2$ vertices and color $1$ has the remaining
$x/2$.  The two-coloring is therefore exactly balanced and avoids
monochromatic coprime $K_k$ in both colors.
\end{proof}

Thus the balanced two-color objection is resolved exactly for every $k\ge2$:
the same divisor-certificate family contains a perfectly balanced extremal
coloring at every two-color diagonal endpoint, and the only number-theoretic
input is the weak prime-index gap of \Cref{lem:prime-index-gaps}.

\begin{corollary}[A density window around balance]
\label{cor:density-window}
Let $k\ge3$ and set $x=p_{2k-2}-1$.  For every integer $r$ satisfying
\[
  \left|r-\frac{x}{2}\right|\le k-2,
\]
there is a two-coloring of $[x]$ with exactly $r$ vertices in color $0$ and no
monochromatic coprime $K_k$.
\end{corollary}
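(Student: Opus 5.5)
We reuse the deterministic split from the proof of \Cref{thm:balanced-exact} unchanged: vertex $1$ goes to color $0$, the bins are $B_0=\{p_2,\ldots,p_{k-1}\}$ and $B_1=\{p_1,p_k,\ldots,p_{2k-3}\}$, and we only allow divisor-certified colorings. In such a coloring each vertex $v\ne1$ receives a color $i$ for which $\sigma(v)\cap B_i\neq\emptyset$. The certificate argument there already shows that every divisor-certified coloring of $[x]$ avoids a monochromatic coprime $K_k$. So it remains to show that the size of color $0$ can be any integer $r$ in the window $[x/2-(k-2),\,x/2+(k-2)]$.

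\textbf{Step 1: the interval of achievable sizes.} Partition $[x]$ into three classes.
\begin{itemize}
\item The forced-$0$ set $F_0$, of size $x/2-(k-2)$, computed exactly in the balanced proof.
\item The forced-$1$ set $F_1$, consisting of vertices with $\sigma(v)\subseteq B_1$.
\item The flexible set $\Phi$, consisting of vertices whose support meets both bins.
\end{itemize}
Flexible vertices can be colored independently of one another, because the certificate condition is checked vertex by vertex. Hence the size of color $0$ takes every integer value in $[|F_0|,\,|F_0|+|\Phi|]$. The lower end $|F_0|=x/2-(k-2)$ is exactly the left end of the window, and it is attained by coloring all of $\Phi$ with $1$. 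So it suffices to prove $|\Phi|\ge 2(k-2)$.

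\textbf{Step 2: exhibiting $2(k-2)$ flexible vertices.} The balanced proof already supplies the $k-2$ flexible vertices $2p_i$ for $2\le i\le k-1$, and they all lie in $[x]$ by \Cref{lem:prime-index-gaps}. We need $k-2$ more. Natural candidates are:
\begin{itemize}
\item $4p_i$, $6p_i$, or more generally even multiples $2^a p_i$ with $p_i\in B_0$;
\item products $p_ip_j$ with $p_i\in B_0$ and $p_j\in B_1$ odd.
\end{itemize}
The cleanest choice is to try $3\cdot 2=6$ together with the numbers $2p_i$, or to use $4p_i$ for the small indices $i$. This requires $4p_i\le x$.

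\textbf{Main obstacle.} The difficulty is a quantitative lower bound on $|\Phi|$. The weak lemma only gives $2p_{k-1}<p_{2k-2}$, so $4p_i\le x$ is not guaranteed for $i$ near $k-1$.

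The first thing I would try is a counting argument. Every even $v\le x$ that is not a power of $2$ and whose odd part is $B_0$-smooth is flexible. Moreover, since $|F_0|=x/2-(k-2)$, the odd numbers are nearly all $B_0$-smooth.

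So let $E$ be the set of even numbers in $[x]$, of which there are $x/2$. Write each even $v$ as $v=2^a u$ with $u$ odd. Then
\[
|\Phi|\ \ge\ \#\{v\in E:\ u\ne1,\ u\in F_0\}\ \ge\ \frac{x}{2}-\log_2 x-\#\{v\in E:\ u \text{ has a prime divisor } \ge p_k\}.
\]
The last count is small. If $u$ has such a prime divisor $q$, then $2q\le x$, so $q\le x/2$, and by \Cref{lem:prime-index-gaps} $u$ must equal $q$. Hence there are at most $k-2$ such even numbers.

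This gives $|\Phi|\ge x/2-\log_2x-(k-2)$. It remains to verify $x/2-\log_2 x\ge 3(k-2)$. This follows from $x+1=p_{2k-2}\ge$ roughly $(2k-2)\log(2k-2)$ for large $k$, and a direct check covers the small cases $k=3,4,\ldots$. (For $k=3$, $x=6$ and the needed bound is $|\Phi|\ge2$: the flexible vertices are $6$ and one other, to be checked by hand.)

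Once $|\Phi|\ge 2(k-2)$ is established, every $r$ in the window is realized by coloring exactly $r-|F_0|$ flexible vertices with $0$.
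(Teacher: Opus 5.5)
There is a genuine gap in the upper half of the window. Your reduction is sound as far as it goes: with vertex $1$ in color $0$ and the skip-2 split fixed, the achievable color-$0$ sizes are exactly $|F_0|,\ldots,|F_0|+|\Phi|$, so you need $|\Phi|\ge 2(k-2)$. That inequality is false for small $k$, so no direct check can supply it.

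\begin{itemize}
\item At $k=3$ ($x=6$, $B_0=\{3\}$, $B_1=\{2,5\}$) the only flexible vertex is $6$; there is no ``one other''.
\item At $k=4$ ($x=12$) one finds $\Phi=\{6,10,12\}$.
\item At $k=5$ ($x=18$) one finds $\Phi=\{6,10,12,14,18\}$.
\end{itemize}

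In each case $|\Phi|=2k-5<2(k-2)$, so the top value $r=x/2+(k-2)$ cannot be reached in your framework.

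Your counting estimate $x/2-\lfloor\log_2 x\rfloor-(k-2)$ is also too weak for every $k\le 8$. For $k=6,7,8$ the exact counts $|\Phi|=9,12,15$ happen to suffice. The step ``at most $k-2$ such even numbers'' also relies on an unstated fact: $4p_k>3p_k>p_{2k}>x$, which forces each such $v$ to be $2q$ rather than $2^aq$.

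The missing idea is the color symmetry of the diagonal problem. Interchanging the two color names maps an avoiding coloring with $r$ vertices of color $0$ to an avoiding coloring with $x-r$ vertices of color $0$. Hence you only need a construction for $x/2-(k-2)\le r\le x/2$. The $k-2$ flexible vertices $2p_2,\ldots,2p_{k-1}$ you already have give this: color any $t$ of them $0$ and every other flexible vertex $1$. Reflection then covers $x/2\le r\le x/2+(k-2)$. This is the paper's argument, and it needs no lower bound on $|\Phi|$ beyond $k-2$.
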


\begin{proof}
Use the deterministic split from the proof of \Cref{thm:balanced-exact}.  The
forced color-$0$ set has size $x/2-(k-2)$, and the vertices
$2p_2,\ldots,2p_{k-1}$ are flexible because each has one prime divisor in
each bin.  Coloring any $t$ of these flexible vertices with color $0$ realizes
every color-$0$ size in
\[
  \frac{x}{2}-(k-2),\ \frac{x}{2}-(k-3),\ldots,\frac{x}{2}.
\]
Interchanging the two color names realizes the symmetric sizes above $x/2$.
\end{proof}

The window in \Cref{cor:density-window} has relative width
$O(k/p_{2k-2})=O(1/\log k)$.  It is not a full prescribed-density theorem, but
it records a useful robustness fact: the exact balanced witness is not a
single isolated coloring.  \Cref{tab:density-window-verify} confirms
the construction for selected $k$, and the summary in
\Cref{tab:density-window-summary} verifies the formula through
$k=100{,}000$ while retaining explicit enumeration through $k=500$.

\begin{corollary}[Off-diagonal balanced endpoints]
\label{cor:balanced-off-diagonal}
Let $s,t\ge2$, and let $L_{\mathrm{bal}}(s,t)$ be the largest $n$ for which
there is a two-coloring of $[n]$ with color classes differing by at most one,
with no color-$0$ coprime $K_s$ and no color-$1$ coprime $K_t$.  Then
\[
  L_{\mathrm{bal}}(s,t)=p_{s+t-2}-1 .
\]
\end{corollary}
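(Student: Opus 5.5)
The upper bound needs no new work. By \Cref{thm:main}, $\Rcop(s,t)=p_{s+t-2}$, so at $n=p_{s+t-2}$ no two-coloring at all avoids both targets, balanced or not. Hence $L_{\mathrm{bal}}(s,t)\le p_{s+t-2}-1$, and the whole task is a balanced avoiding coloring of $[x]$ with $x=p_M-1$, $M=s+t-2$. As in the proof of \Cref{thm:balanced-exact}, $x$ is even (for $M\ge2$) and the primes up to $x$ are exactly $p_1,\ldots,p_{M-1}$.

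I would imitate the deterministic split of \Cref{thm:balanced-exact}. Put $1$ in color $0$, and take
\[
  B_0=\{p_2,\ldots,p_{s-1}\},\qquad B_1=\{p_1\}\cup\{p_s,\ldots,p_{M-1}\},
\]
so $|B_0|=s-2$ and $|B_1|=t-1$. Any divisor-certified coloring (each $v>1$ colored $i$ has a prime divisor in $B_i$) then has color-$0$ cliques of size at most $s-1$ and color-$1$ cliques of size at most $t-1$, by the injection argument of \Cref{thm:main}. Let $F_i$ be the set of vertices forced into color $i$, namely those with support inside $B_i$ (plus $1\in F_0$); everything else is flexible. A balanced certified coloring exists as soon as
\[
  |F_0|\le \tfrac{x}{2}\quad\text{and}\quad |F_1|\le\tfrac{x}{2},
\]
since the flexible vertices can then be distributed freely to reach exactly $x/2$ on each side. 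The first inequality is automatic because $B_0$ consists of odd primes, so $F_0$ consists of odd numbers. The real content is therefore the bound $|F_1|\le x/2$.

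Here $F_1$ is the set of numbers $2^a m\le x$ whose odd part $m$ avoids all of $p_2,\ldots,p_{s-1}$. I would bound it by a sieve count: $|F_1|$ is at most the number of integers in $[x]$ coprime to $3\cdot5\cdots p_{s-1}$. For large $s$ this density is far below $1/2$, but it must be made exact with explicit error terms, or combined with \Cref{lem:prime-index-gaps}-type prime-gap facts showing that $p_s$ is large relative to $\sqrt{x}$ in the relevant ranges.

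The small-$s$ cases fail for this orientation. For $s=2$ the set $B_0$ is empty, so $F_1=[2,x]$, which is far too large. There I would use a different explicit coloring: color $0$ gets all even numbers (pairwise non-coprime, so no coprime $K_2$, and exactly $x/2$ of them), and color $1$ gets the odd numbers including $1$. The odd numbers have coprime clique number $1+(M-2)=t-1$, since the odd primes up to $x$ are $p_2,\ldots,p_{M-1}$. Because the statement is not symmetric in $s,t$, for small $s$ (say $s\le4$, where the density $(1-\tfrac13)(1-\tfrac15)>\tfrac12$) I would instead try the mirrored orientation. That means putting $2$ with the small-demand color and $1$ with the other, and analyzing the forced sets the same way, choosing per case whichever orientation keeps both forced sets at most $x/2$.

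The main obstacle I expect is exactly this uniform control of $|F_1|$ for small $s$ (or small $t$ after mirroring) across all $t$. My first attempt would be a two-regime argument: an explicit sieve or Dusart-type bound for large parameters, plus a finite computation for the remaining small pairs, in the same spirit as the finite check in \Cref{lem:prime-index-gaps}.
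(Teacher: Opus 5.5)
There is a genuine gap. Your reduction is sound: with vertex $1$ in one color and a divisor-certified split, a balanced avoiding coloring exists as soon as both forced sets have size at most $x/2$. Your $s=2$ coloring (evens against odds together with $1$) is also correct.

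However, the decisive inequality $|F_1|\le x/2$ is never proved. You defer it to three things: an unspecified explicit sieve with Dusart-type error terms, a per-case choice of orientation, and a finite computation. You never say which orientation handles which pairs $(s,t)$, nor show that the error terms close. The trivial Legendre error over all of $3\cdot5\cdots p_{s-1}$ is of order $2^{s-2}$, which swamps $x\approx p_{s+t-2}$, so you would at least have to truncate the sieve and then check a finite range. As written, the ``main obstacle'' you identify is exactly the missing part of the proof.

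The missing idea is to choose the orientation by $\max\{s,t\}$ rather than by $s$; then no sieve is needed.
\begin{itemize}
\item Let $a=\max\{s,t\}$ and $b=\min\{s,t\}$. Put vertex $1$ in the color with demand $a$ and give that color the bin $\{p_2,\ldots,p_{a-1}\}$. Give the other color the bin $\{p_1,p_a,\ldots,p_{M-1}\}$.
\item Since $M\le 2a-2<2a$, \Cref{lem:prime-index-gaps} gives $3p_a>p_{2a}>p_M=x+1$. Hence an odd $v\le x$ with a prime factor $q\ge p_a$ must equal $q$, since otherwise $v\ge 3q>x$.
\item So the odd vertices outside the forced set of the $a$-color are exactly the $b-2$ primes $p_a,\ldots,p_{M-1}$. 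That forced set therefore has size exactly $x/2-(b-2)$.
\item The $b-2$ vertices $2p_2,\ldots,2p_{b-1}$ lie in $[x]$, because $2p_{b-1}<p_{2b-2}\le p_M$. They are flexible because $b-1\le a-1$. Moving them to the $a$-color gives exactly $x/2$ there.
\item The other color then receives the remaining evens (certified by $2$) and the primes $p_a,\ldots,p_{M-1}$, also exactly $x/2$ vertices.
\end{itemize}

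In your notation, when $s\ge t$ your own split already works with this exact count: your $|F_1|$ is at most $x/2-(t-2)+(t-2)=x/2$, since the $t-2$ flexible evens $2p_2,\ldots,2p_{t-1}$ are excluded and only $t-2$ odd primes enter. When $s<t$, the ``mirrored orientation'' you reserve for small $s$ works for every $s<t$ by the same argument. This is the paper's proof: it uses only the weak gap lemma, with no sieve estimate and no additional computation.
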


\begin{proof}
The upper bound is \Cref{thm:main}.  For the lower bound, set
$a=\max\{s,t\}$, $b=\min\{s,t\}$, $M=a+b-2$, and $x=p_M-1$.  Put vertex $1$
in the color with demand $a$, and split the primes at most $x$ as
\[
  B_A=\{p_2,\ldots,p_{a-1}\},\qquad
  B_B=\{p_1,p_a,\ldots,p_{M-1}\}.
\]
Then $|B_A|=a-2$ and $|B_B|=b-1$.  As in the proof of
\Cref{thm:balanced-exact}, every odd vertex not forced into color $A$ must be
one of the primes $p_a,\ldots,p_{M-1}$.  Indeed, if an odd composite has a
prime factor $q\ge p_a$, then it is at least $3q\ge3p_a>p_M=x+1$, since
$M\le2a-2<2a$ and \Cref{lem:prime-index-gaps} gives $p_{2a}<3p_a$.
Thus the forced color-$A$ set has size $x/2-(b-2)$.  The $b-2$ vertices
$2p_2,\ldots,2p_{b-1}$ lie in $[x]$: indeed
$2p_{b-1}<p_{2b-2}\le p_M=x+1$ by \Cref{lem:prime-index-gaps} and
$2b-2\le M$.  Each has one prime divisor in each bin, so these vertices are
flexible.  Assigning them to color $A$ gives exactly $x/2$ vertices in each
color.  The divisor witnesses bound the color-$A$ clique size by $a-1$ and
the color-$B$ clique size by $b-1$.  Relabeling the colors if necessary gives
the stated ordered pair $(s,t)$.
\Cref{tab:balanced-off-diagonal-grid} confirms the construction on the full
grid $2\le s,t\le1000$; \Cref{tab:balanced-off-diagonal-verify} lists selected
instances from that grid.
\end{proof}

\section{Boundary Cases}
\label{sec:boundary}

\subsection{Multicolor Balance}

A natural next guess is that the same balanced endpoint should hold for
$c\ge3$ colors at $n=p_{c(k-1)}-1$.  This is false in the first nontrivial
case.  An exact MILP check shows that there is no balanced $3$-coloring of
$[12]$ into three classes of size $4$ avoiding monochromatic coprime triples,
even though $12=p_6-1$ is the unrestricted extremal endpoint for
$\Rcop(3;3)=p_6=13$.  Thus the deterministic two-color split above is not a
minor variant of a general all-color balance theorem; multicolor balance is
a new density-constrained problem.

\begin{table}[htbp]
\centering
\caption{Selected multicolor balanced endpoint MILP checks.  The endpoint
is $n=p_{c(k-1)}-1$; ``no'' means every balanced $c$-coloring at that
endpoint already contains a monochromatic coprime $K_k$, and ``unknown'' marks
a time- or resource-limited instance.}
\label{tab:balanced-multicolor-endpoint}
\small
\begin{tabular}{@{}rrrrr@{}}
\toprule
$c$ & $k$ & endpoint $n$ & coprime $K_k$ count & balanced feasible? \\
\midrule
3 & 3 & 12 & 79 & no \\
4 & 3 & 18 & 277 & no \\
5 & 3 & 28 & 1016 & no \\
3 & 4 & 22 & 928 & yes \\
3 & 5 & 36 & 14767 & yes \\
4 & 4 & 36 & 6979 & yes \\
6 & 3 & 36 & 2150 & no \\
7 & 3 & 42 & 3522 & no \\
4 & 5 & 52 & -- & unknown \\
\bottomrule
\end{tabular}

\end{table}

The obstruction is finite rather than asymptotic.  A round-robin prime-bin
partition gives balanced certificate colorings after a small initial range.
In the original scan, this begins at $k=6,8,15,16,24,28,37,53$ for
$c=3,\ldots,10$, respectively, through $k=1000$.  A subsequent phase scan
using a lower/upper bounded max-flow assignment extends the same start-$0$
round-robin strategy to $3\le c\le20$ through $k=500$ and to
$21\le c\le30$ through $k=400$; the largest observed onset in that scan is
$k=373$ for $c=30$.  Testing all round-robin starts for $3\le c\le20$ and
$k\le250$ did not improve these onsets, so the simple start-$0$ rule is not
being hidden by a better cyclic shift.  See \Cref{app:balanced} for the
certificate-family diagnostic,
\Cref{tab:balanced-multicolor-defect-map} for the exact small solver statuses
that were computationally decidable, and \Cref{tab:balanced-multicolor-phase}
for the extended phase summary.

\begin{theorem}[Eventual multicolor balanced certificates]
\label{thm:eventual-multicolor-balanced}
Fix $c\ge2$.  For all sufficiently large $k$, there is a balanced
$c$-coloring of the endpoint $[p_{c(k-1)}-1]$ with no monochromatic coprime
$K_k$.  Consequently, if $L_{\mathrm{bal}}(k;c)$ denotes the largest such
balanced avoiding endpoint, then
\[
  L_{\mathrm{bal}}(k;c)=p_{c(k-1)}-1
\]
for all sufficiently large $k$.
\end{theorem}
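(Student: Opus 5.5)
The upper bound $L_{\mathrm{bal}}(k;c)\le p_{c(k-1)}-1$ is immediate from \Cref{thm:main}. Only the lower bound needs work: for large $k$ we must exhibit a balanced divisor-certified coloring of $[x]$, where $x=p_{M}-1$ and $M=c(k-1)$. The abstract points to the method, which is a Hall (flow) argument plus a Selberg--Delange count. The primes up to $x$ are exactly $p_1,\dots,p_{M-1}$. Split them into bins $B_1,\dots,B_c$ with $|B_1|=k-2$ and $|B_i|=k-1$ for $i\ge2$, and put vertex $1$ in color $1$. A coloring in which each $v>1$ receives a color $i$ with $\sigma(v)\cap B_i\neq\emptyset$ avoids monochromatic $K_k$, exactly as in the proof of \Cref{thm:main}. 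The balance requirement is that every class has size $\lfloor x/c\rfloor$ or $\lceil x/c\rceil$.

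This is a bipartite $b$-matching problem. Vertices of $[2,x]$ sit on the left, and color $i$ on the right has capacity $t_i$, where the $t_i$ are the balanced target sizes (with $t_1$ reduced by one to account for vertex $1$). Vertex $v$ may go to color $i$ iff $\sigma(v)\cap B_i\ne\emptyset$. Since $\sum t_i=x-1$ equals the number of left vertices, a perfect assignment exists iff Hall's condition holds: for every nonempty $S\subseteq\{1,\dots,c\}$,
\[
  N(S):=\#\{v\in[2,x]:\sigma(v)\subseteq \textstyle\bigcup_{i\in S}B_i\}\le \sum_{i\in S}t_i\approx |S|\,x/c .
\]
For $S$ equal to all colors this holds with equality, so we only need the inequality for proper subsets $S$ with $1\le|S|\le c-1$. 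Here $N(S)$ counts the integers up to $x$ all of whose prime factors lie in a prescribed set of about $|S|(k-1)$ primes. That set is a fraction $\theta\approx|S|/c$ of the primes below $x$.

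The bins should be chosen by round-robin: sort the primes and deal them cyclically. Then every union of $|S|$ bins contains roughly a $\theta=|S|/c$ proportion of the primes in every dyadic range, with $\sum_{p\in U,\,p\le y}1/p=\theta\log\log y+O(1)$ for $y\le x$. With that, Selberg--Delange (or Wirsing/Hall--Tenenbaum style bounds for multiplicative functions supported on a set of primes of relative density $\theta<1$) gives $N(S)\ll_c x(\log x)^{\theta-1}$. One subtlety: the prime set stops at $x$, but the primes near $x$ contribute only the primes themselves plus bounded extras, which is lower order. Since $\theta\le (c-1)/c<1$, the bound $N(S)=o(x)$ holds, which is far below $|S|x/c-O(1)$ for all large $k$. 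Hall's condition therefore holds for every proper $S$, uniformly over the finitely many ($2^c$) subsets. This proves the theorem for $k\ge k_0(c)$.

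The main obstacle is making the Selberg--Delange step rigorous and uniform. The admissible prime set depends on $k$ and is truncated at $x$ itself, so we cannot cite a theorem for a fixed set of primes with a regular density. I would handle this by comparison. Bound $N(S)$ by the Rankin-type estimate $N(S)\le x^{\sigma}\prod_{p\in U}(1-p^{-\sigma})^{-1}$ with $\sigma=1-1/\log x$. This gives $N(S)\ll x\exp\bigl(\sum_{p\in U}1/p\bigr)/\log x\ll x(\log x)^{\theta-1}$, using only the round-robin estimate $\sum_{p\in U}1/p\le\theta\log\log x+O_c(1)$, which follows from Mertens. This avoids the full Selberg--Delange asymptotic and needs only an upper bound, which is all Hall's condition requires. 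The remaining check is that the $\pm1$ rounding in the $t_i$ and the bin $B_1$ being one short are absorbed by the $o(x)$ slack.
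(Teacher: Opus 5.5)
Your Hall/flow reduction is correct, and round-robin dealing does give $\sum_{p\in U}1/p=\theta\log\log x+O(1)$ with $\theta=|S|/c$: compare within consecutive blocks of $c$ primes, and the errors telescope. The gap is the analytic step you flag as the main obstacle. Rankin's bound with $\sigma=1-1/\log x$ does not produce a factor $1/\log x$. Indeed $x^{\sigma}=x/e$. Also $p^{1/\log x}-1\le(e-1)\log p/\log x$ for $p\le x$, so Mertens gives $\prod_{p\in U}(1-p^{-\sigma})^{-1}=\exp\bigl(\sum_{p\in U}1/p+O(1)\bigr)$. What you actually obtain is therefore $N(S)\ll x(\log x)^{\theta}$, which is weaker than the trivial bound $N(S)\le x$. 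Retuning $\sigma$ does not help. Taking $\sigma=1-u/\log x$ gains $e^{-u}$ from $x^{\sigma}$ but inflates the Euler product by roughly $\exp\bigl(\theta\int_0^u(e^v-1)v^{-1}\,dv\bigr)$. So for a prime set of positive relative density, Rankin's trick saves at most a constant factor; it is efficient only for very sparse supports such as smooth numbers. As written, the proof never establishes $N(S)=o(x)$.

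The missing ingredient is the Halberstam--Richert (Hall--Tenenbaum) inequality. Suppose $f\ge0$ is multiplicative with $\sum_{p\le y}f(p)\log p\le Ay$ for all $y$ and $\sum_p\sum_{\nu\ge2}f(p^\nu)\nu\log p/p^{\nu}\le B$. Then $\sum_{n\le x}f(n)\ll_{A,B}\frac{x}{\log x}\sum_{n\le x}f(n)/n$. Take $f$ to be the indicator of $U$-supported integers. Chebyshev's bound gives absolute $A,B$ for every set of primes, so your uniformity concern about the $k$-dependent, truncated prime set disappears. You get $N(S)\ll\frac{x}{\log x}\prod_{p\in U}(1-1/p)^{-1}\ll_c x(\log x)^{\theta-1}=o(x)$, and with this substitution your argument closes.

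It is then a genuinely different route from the paper's. The paper chooses the bins uniformly at random with the prescribed capacities. It bounds the probability that every prime divisor of $v$ lands in $U$ by $z_s^{\omega(v)}$, using sampling without replacement. It then sums with the fixed-$z$ Selberg--Delange estimate $\sum_{v\le x}z^{\omega(v)}\ll x(\log x)^{z-1}$ and finishes with Markov's inequality and a union bound over $S$. Randomizing turns the $k$-dependent prime set into the fixed weight $z^{\omega(v)}$ and needs only $|U|/(M-1)\le z_s<1$, at the cost of giving only existence. Your deterministic version needs the logarithmic density of $U$ but yields an explicit partition: the round-robin one the paper tests numerically.
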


\begin{proof}
The case $c=2$ is \Cref{thm:balanced-exact}, so assume $c\ge3$ is fixed.
Put $M=c(k-1)$ and $x=p_M-1$.  The primes at most $x$ are
$p_1,\ldots,p_{M-1}$.  Choose uniformly a partition of these primes into
bins $B_1,\ldots,B_c$ with
\[
  |B_1|=k-2,\qquad |B_i|=k-1\quad(2\le i\le c),
\]
and put vertex $1$ in color $1$.  Set $A(1)=\{1\}$.  For $v>1$, let
\[
  A(v)=\{i:\sigma(v)\cap B_i\ne\emptyset\}
\]
be the set of colors allowed by the divisor certificate.  It is enough to
show that, with positive probability, the vertices can be assigned to allowed
colors with a fixed balanced target vector
$t_1,\ldots,t_c\in\{\lfloor x/c\rfloor,\lceil x/c\rceil\}$ satisfying
$\sum_i t_i=x$.

By the max-flow form of Hall's theorem, this assignment exists if for every
proper nonempty subset $S\subsetneq[c]$ the number
\[
  N_S=\#\{v\in[x]:A(v)\subseteq S\}
\]
is at most the total target capacity of the colors in $S$.  Fix such an $S$,
write $s=|S|$, and let $U_S=\bigcup_{i\in S}B_i$.  Since $c$ and $s<c$ are
fixed,
\[
  \frac{|U_S|}{M-1}\le z_s<1
\]
for all sufficiently large $k$, with $z_s$ depending only on $c$ and $s$.
For a fixed integer $v>1$ with $\omega(v)$ distinct prime divisors, the
probability that all of its prime divisors fall in $U_S$ is at most
$z_s^{\omega(v)}$: the prime bins are sampled with fixed capacities, so the
distinct prime divisors are exposed without replacement, and their indicators
are negatively associated; equivalently, each conditional exposure has
probability at most $|U_S|/(M-1)\le z_s$.  Hence
\[
  \mathbb E N_S
  \le 1+\sum_{v\le x} z_s^{\omega(v)}
  =O_{c,s}\!\left(x(\log x)^{z_s-1}\right)
  =o(x),
\]
where the middle estimate is the standard fixed-$z<1$ Selberg--Delange bound
for $\sum_{v\le x}z^{\omega(v)}$ \cite[Chapter II.5]{tenenbaum2015}.
Markov's inequality and a union bound over the finitely many proper
$S\subsetneq[c]$ show that, for some prime-bin partition and all sufficiently
large $k$, every $N_S$ is at most $s x/(2c)$.  Since the balanced target
capacity of $S$ is at least $s\lfloor x/c\rfloor$, this is below
$\sum_{i\in S}t_i$ for all large $x$.

Choose a balanced assignment satisfying these capacities.  If a vertex is
assigned color $i$, then it has a divisor in $B_i$, so the same
prime-divisor injection used in \Cref{thm:main} bounds monochromatic coprime
cliques.  Color $1$ has at most $1+|B_1|=k-1$ vertices in any such clique,
and every other color has at most $|B_i|=k-1$.  Thus the balanced assignment
avoids monochromatic coprime $K_k$ at $x=p_{c(k-1)}-1$.  The upper endpoint is
the unrestricted value from \Cref{thm:main}.
\end{proof}

Thus multicolor balance has a small-value defect followed by an eventual
certificate regime for every fixed $c$, rather than a uniform exact endpoint
theorem valid from $k=2$ onward.

\subsection{Shifted Intervals}

The exact theorem depends on two features of $[n]$: the universal vertex
$1$ and the initial prime clique.  To test the boundary of the result, we
computed the two-color threshold on shifted intervals
\[
  I_{m,n}=\{m+1,\ldots,m+n\}
\]
for $m=2,\ldots,50$ and $k=3,4,5$.  The induced graph has the same coprime
adjacency rule, but the vertex set no longer contains $1$ and the prime
clique is interval-dependent.  Each entry was solved by an exact binary MILP
over all coprime $K_k$ subgraphs in the interval.
An expanded deterministic boundary scan reaches all shifts $2\le m\le100$ for
$k=3,4,5$ exactly; at $k=6$, the same formulation already produces many
time-limited instances, so we treat that as the practical exact-MILP frontier
rather than as primary evidence for a formula.

\begin{proposition}[A prime-clique upper bound for shifted intervals]
\label{prop:shifted-upper}
For an integer $m\ge0$, let $R_{\mathrm{cop}}^{(m)}(k;2)$ be the least $n$ such that
every two-coloring of $\{m+1,\ldots,m+n\}$ contains a monochromatic coprime
$K_k$.  With the convention $\pi(0)=0$,
\[
  R_{\mathrm{cop}}^{(m)}(k;2)
  \le
  \min\{n:\pi(m+n)-\pi(m)\ge 2k-1\}
  \le p_{\pi(m)+2k-1}-m .
\]
\end{proposition}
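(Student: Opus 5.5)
The plan is to reuse the prime-clique pigeonhole from the upper-bound half of \Cref{thm:main}, now without the universal vertex $1$. Take $n$ with $\pi(m+n)-\pi(m)\ge 2k-1$. Then the interval $\{m+1,\ldots,m+n\}$ contains at least $2k-1$ primes, namely those $p$ with $m<p\le m+n$. Distinct primes are pairwise coprime, so these primes form a clique of the induced coprime graph on the interval.

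In any two-coloring of the interval, if each color contained at most $k-1$ of these primes, the clique would have at most $2k-2$ vertices. That is a contradiction, so some color contains $k$ pairwise coprime primes, i.e.\ a monochromatic coprime $K_k$. Hence every such $n$ forces, and $R_{\mathrm{cop}}^{(m)}(k;2)$ is at most the least such $n$. This gives the first inequality. The set over which the minimum is taken is nonempty because there are infinitely many primes.

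For the second inequality, put $n_0=p_{\pi(m)+2k-1}-m$. Since $p_{\pi(m)}\le m<p_{\pi(m)+1}$ (with the convention $\pi(0)=0$ covering $m=0$ and $m=1$), we have $n_0\ge1$. Moreover $\pi(m+n_0)=\pi(p_{\pi(m)+2k-1})=\pi(m)+2k-1$, so $n_0$ belongs to the set, and the minimum is at most $n_0$.

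There is no real obstacle here. The only care needed is the boundary convention: the primes counted must lie strictly above $m$, and $p_{\pi(m)+1}>m$ is exactly the statement that $\pi(m)$ counts all primes at most $m$. The reason the count is $2k-1$ rather than $2k-2$ as in \Cref{thm:main} is the absence of the universal vertex $1$ for $m\ge1$. For $m=0$ the bound is still valid, though weaker than \Cref{thm:main} by one prime.
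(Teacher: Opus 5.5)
Your proposal is correct and is the paper's proof: the primes in $(m,m+n]$ form a coprime clique of size at least $2k-1$, pigeonhole over two colors puts $k$ of them in one color, and the second bound comes from taking $n=p_{\pi(m)+2k-1}-m$. Your extra checks (that the set is nonempty, that $n_0\ge1$, and that $\pi(m+n_0)=\pi(m)+2k-1$) only make explicit what the paper leaves implicit.
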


\begin{proof}
If $\pi(m+n)-\pi(m)\ge 2k-1$, then the primes in $(m,m+n]$ are vertices of
the interval and form a coprime clique of size at least $2k-1$.  In any
two-coloring, one color appears on at least $k$ vertices of this clique.  The
second displayed bound is obtained by taking
$n=p_{\pi(m)+2k-1}-m$.
\end{proof}

The bound is deliberately one-sided.  It uses a clique of $2k-1$ interval
primes; without the universal vertex $1$, a clique of size $2k-2$ can split
evenly between the two colors.  For $m=0$ this gives the weaker upper bound
$p_{2k-1}$, while \Cref{thm:main} improves it to $p_{2k-2}$ exactly because
the initial interval also contains the universal vertex.  Thus shifted
intervals isolate the role played by vertex $1$ in the main theorem.
A systematic attempt to adapt the prime-bin construction to shifted intervals
as explicit lower-bound certificates found no applicable cases in the tested
range.  In the expanded scan summarized in
\Cref{tab:shifted-lower-bound-summary}, this happened for all
$499\cdot5=2495$ parameter choices with $2\le m\le500$ and $3\le k\le7$
under the tested interval lengths.  Without the universal vertex and the full
initial prime clique, the naive interval-adapted bin construction typically
leaves many integers without a witness prime.  This negative result
reinforces the conclusion that the certificate is genuinely sensitive to the
presence of vertex $1$ and the initial-segment structure.

\begin{table}[htbp]
\centering
\caption{Selected shifted-interval thresholds.  The entry is the least
length $n$ such that every two-coloring of $\{m+1,\ldots,m+n\}$ contains a
monochromatic coprime $K_k$.}
\label{tab:shifted-intervals}
\small
\begin{tabular}{@{}rrrr@{}}
\toprule
shift $m$ & $k=3$ & $k=4$ & $k=5$ \\
\midrule
2 & 9 & 15 & 21 \\
3 & 8 & 14 & 20 \\
5 & 8 & 14 & 20 \\
10 & 7 & 13 & 19 \\
20 & 9 & 17 & 23 \\
30 & 7 & 13 & 23 \\
40 & 7 & 13 & 19 \\
50 & 9 & 17 & 23 \\
\bottomrule
\end{tabular}

\end{table}

\begin{figure}[htbp]
\centering
\includegraphics[width=0.76\linewidth]{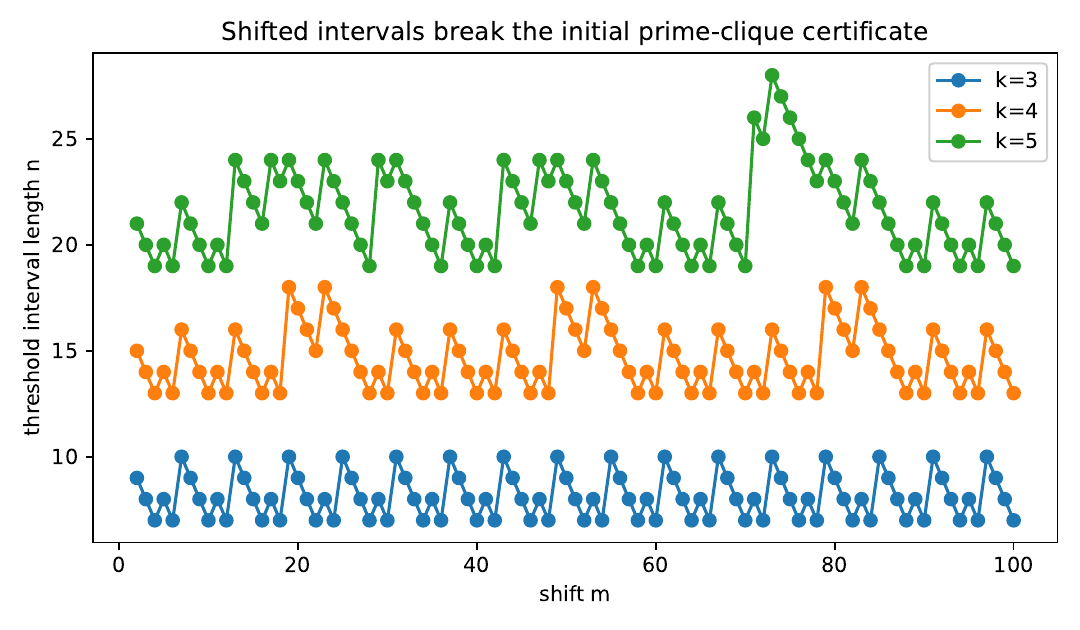}
\caption{Shifted intervals produce a small but genuine dependence on local
arithmetic structure.  The prime-bin proof for $[n]$ does not transfer
verbatim once vertex $1$ is removed.}
\label{fig:shifted-intervals}
\end{figure}

The entries should be read as local certificate data rather than as a noisy
version of the unshifted formula.  For example, the equality at $m=10$ and
$k=3$ is witnessed by the five-vertex coprime clique
\[
  \{11,13,14,15,17\}\subseteq[11,17],
\]
which plays the same forcing role as the initial prime clique in $[7]$ even
though it is no longer made only of primes and the vertex $1$ is absent.  By
contrast, nearby shifts may need longer intervals before such a local
forcing structure appears.

\section{Relation to Existing Work}

\paragraph{Classical and graph Ramsey theory.}
The theorem sits beside, rather than inside, the classical edge-coloring
Ramsey-number program initiated by Ramsey, Erd\H{o}s--Szekeres, Erd\H{o}s,
and Greenwood--Gleason
\cite{ramsey1930,erdos-szekeres1935,erdos1947,greenwood-gleason1955}.
That program remains computationally and asymptotically difficult; see the
dynamic survey of Radziszowski for small values and bounds
\cite{radziszowski2026}.  In the off-diagonal case, the classical estimate
$\Rcl(3,k)=\Theta(k^2/\log k)$ follows from Ajtai et al. and Kim
\cite{ajtai-komlos-szemeredi1980,kim1995}; by \Cref{cor:r3k-gap}, the
vertex-coprime analogue is smaller by a factor
$\Theta((\log k)^2/k)$.  The broader study of Ramsey properties of host
graphs goes back at least to Burr--Erd\H{o}s--Lov\'asz
\cite{burr-erdos-lovasz1976}.  Our host graph is highly structured, but the
coloring is on vertices rather than edges, which is why the prime-bin
certificate collapses the problem.

\paragraph{Coprime graphs and common-factor graphs.}
The graph $G_n$ belongs to a number-theoretic graph family studied from
several directions.  Erd\H{o}s and S\'ark\"ozy studied cycles in the
coprime graph of integers \cite{erdos-sarkozy}; Berkove and Brilleslyper
studied cliques and complete bipartite subgraphs on consecutive intervals
\cite{berkove-brilleslyper2022}; Batta and Hajdu recently studied universal
representation questions for common-factor graphs, the complement viewpoint
\cite{batta-hajdu2026}.  Jorf, Boudine, and Oukhtite studied the coprime
divisors graph $\Gamma_N$ on the proper divisors of one composite integer
$N$, computing coloring parameters and proving perfectness
\cite{jorf-boudine-oukhtite2024}.  Recent arXiv work also studies
structural and spectral properties of $G_n$ itself and related finite-group
coprime graphs \cite{banerjee2025,ma-zhai-gao-pan2025,ranjan-singh2025}.
These papers are not mixed Ramsey partition results, but they motivate the
same support-disjointness model.

\paragraph{The chromatic special case.}
The case $k=2$ recovers the chromatic threshold of the coprime graph:
every $c$-coloring has a monochromatic edge precisely when
$\chi(G_n)>c$, and $\chi(G_n)=\pi(n)+1$.  The standard proof colors each
composite by one of its prime divisors and uses the clique
$\{1\}\cup\{p\le n\}$ for optimality; an informal version appears in
\cite{lavrov-mse}.  Theorem~\ref{thm:main} is not merely a restatement of
this folklore fact.  It replaces single-prime color classes by bins of
capacity $k_i-1$, accounts for the universal vertex $1$ by the asymmetric
$k_1-2$ capacity, and handles arbitrary mixed demands in one formula.

\paragraph{Extremal sets without many pairwise coprime integers.}
There is also a substantial number-theoretic literature on large subsets of
$[n]$ with no $k+1$ pairwise coprime integers, originating in conjectures of
Erd\H{o}s and including work of Choi, Ahlswede--Khachatrian, Chen--Zhou, and
Kiss--S\'andor--Yang
\cite{erdos1962,choi1973,ahlswede-khachatrian1994,chen-zhou,kiss-sandor-yang}.
Those papers study extremal size of one subset.  The present problem is a
Ramsey partition problem: can all of $[n]$ be partitioned into color classes
whose pairwise-coprime packing numbers stay below prescribed thresholds?
Equivalently, the extremal-set line asks for
\[
  \max\{|A|:A\subseteq[n],\ \nu(A)<k\},
\]
whereas the Ramsey line asks whether $[n]$ can be partitioned into sets
$A_1,\ldots,A_c$ with $\nu(A_i)<k_i$ for every $i$.  The prime-bin
construction makes the partition problem exactly soluble, even though the
single-set extremal problem has a different flavor.

\paragraph{Edge-coloring coprime Ramsey numbers.}
Finally, the vertex-coloring problem should not be confused with the
edge-coloring coprime Ramsey problem.  Edge-coloring colors each coprime
pair independently and has a much larger apparent search space.  However,
\Cref{thm:edge-reduction} shows that this variant is exactly the classical
edge Ramsey problem pulled back through prime labels.  The values reported
in Towell's online AI-assisted computational exploration
\cite{towell-blog,towell-github},
namely $\Redge(3;2)=11$, $\Redge(3;3)=53$, and $\Redge(4;2)=59$, are exactly
the classical edge Ramsey numbers $R(3,3)=6$, $R(3,3,3)=17$, and
$R(4,4)=18$ viewed through the prime-index map $N\mapsto p_{N-1}$ of
\Cref{thm:edge-reduction}.  Thus the overlap is one of host graph and
terminology, not of the main vertex-coloring theorem.  The primality pattern
noted in that computational project is likewise not a separate arithmetic
phenomenon for clique edge-coprime values; it follows immediately from the
prime-index reduction.
The same audit also explains the nearest non-clique variants in that project:
asymmetric clique and Gallai-triangle entries are still complete-witness
transfers, while path and cycle entries are not expected to be prime-index
equalities because their witnesses can use composite vertices in ways not
controlled by the label clique; see \Cref{tab:towell-variant-positioning}.

\begin{table}[htbp]
\centering
\caption{Positioning Towell's edge-coloring variants against the
clique-label transfer.  Clique and complete-pattern targets are exact
prime-index transfers; non-complete targets retain only the prime-clique
upper-bound mechanism.}
\label{tab:towell-variant-positioning}
\scriptsize
\setlength{\tabcolsep}{3pt}
\begin{tabular}{@{}>{\raggedright\arraybackslash}p{0.21\linewidth}>{\raggedright\arraybackslash}p{0.13\linewidth}>{\raggedright\arraybackslash}p{0.23\linewidth}>{\raggedright\arraybackslash}p{0.17\linewidth}>{\raggedright\arraybackslash}p{0.09\linewidth}@{}}
\toprule
Towell variant & reported value & rank problem & prime-index prediction & status \\
\midrule
edge $K_3$, two colors & 11 & $R(3,3)=6$ & $p_{5}=11$ & exact \\
edge $K_4$, two colors & 59 & $R(4,4)=18$ & $p_{17}=59$ & exact \\
edge $K_3$, three colors & 53 & $R(3,3,3)=17$ & $p_{16}=53$ & exact \\
asymmetric $K_2/K_3$ & 3 & $R(2,3)=3$ & $p_{2}=3$ & exact \\
asymmetric $K_2/K_4$ & 5 & $R(2,4)=4$ & $p_{3}=5$ & exact \\
asymmetric $K_3/K_4$ & 19 & $R(3,4)=9$ & $p_{8}=19$ & exact \\
Gallai triangle, three colors & 29 & $gr_3(K_3)=11$ & $p_{10}=29$ & exact \\
gcd-$d$ edge triangles & $11d$ & $R(3,3)=6$ on scaled labels & $d\,p_5=11d$ & exact \\
monochromatic paths & $5,7,9,10,13,13$ & path Ramsey on prime clique & upper bound only & not tight \\
monochromatic cycles & $11,8,13,11$ & cycle Ramsey on prime clique & upper bound only & not tight \\
\bottomrule
\end{tabular}

\end{table}

\paragraph{Novelty boundary.}
The closest prior lines above account for the chromatic special case,
one-set extremal questions, divisor-graph coloring parameters, and
edge-coloring computations.  In the checked public sources, we did not find
the mixed vertex-coloring partition formula
\[
  \Rcop(k_1,\ldots,k_c)=p_{\sum_i(k_i-1)}
\]
on the integer coprime graph.  This novelty statement is deliberately modest:
we claim only that the mixed vertex-coloring formula was not present in the
checked public sources, not that it is absent from every private manuscript or
unindexed web page.  The companion literature notes record the database
queries used to audit the May 2026 comparison, while the mathematical
distinction from the closest indexed lines is the one summarized in
\Cref{tab:prior-positioning}.

\begin{table}[htbp]
\centering
\caption{Vertex-coloring and edge-coloring coprime Ramsey problems have the
same host graph but different combinatorial degrees of freedom.}
\label{tab:vertex-edge-comparison}
\small
\begin{tabular}{@{}>{\raggedright\arraybackslash}p{0.17\linewidth}>{\raggedright\arraybackslash}p{0.26\linewidth}>{\raggedright\arraybackslash}p{0.23\linewidth}>{\raggedright\arraybackslash}p{0.18\linewidth}@{}}
\toprule
Feature & Vertex-coloring version & Edge-coloring version & Consequence \\
\midrule
Objects colored & Integers $1,\ldots,n$ & Coprime pairs $\{a,b\}$ & Vertex version has one global choice per integer \\
Forbidden pattern & One color class contains $k$ pairwise coprime integers & One edge color contains all edges of a coprime $K_k$ & Closer to classical Ramsey search \\
Known values & $\Rcop(3;2)=7$, $\Rcop(4;2)=13$, $\Rcop(10;2)=61$ & $\Redge(3;2)=11$, $\Redge(4;2)=59$ & Same host, different object \\
Proof mechanism & Prime-bin theorem gives all mixed values & Prime-label pullback gives $\Redge=p_{\Rcl-1}$ & Both use the prime clique differently \\
Search role & SAT unnecessary after theorem & Equivalent to classical Ramsey search & Computation imports classical bounds \\
\bottomrule
\end{tabular}

\end{table}

\begin{table}[htbp]
\centering
\caption{Precise positioning against the closest prior lines.}
\label{tab:prior-positioning}
\small
\begin{tabular}{@{}>{\raggedright\arraybackslash}p{0.27\linewidth}>{\raggedright\arraybackslash}p{0.31\linewidth}>{\raggedright\arraybackslash}p{0.32\linewidth}@{}}
\toprule
Line of work & Object & Difference from this paper \\
\midrule
Coprime graph coloring & $G_n$ and divisor-type graphs & Covers the $k=2$ chromatic shadow, not mixed Ramsey thresholds \\
Extremal pairwise-coprime sets & One subset $A\subseteq[n]$ maximizing $|A|$ & Single-set extremal problem, not a full partition of $[n]$ \\
Coprime divisor graph $\Gamma_N$ & Proper divisors of one composite $N$ & Same prime-partition motif, different host graph and coloring question \\
Edge-coprime Ramsey computations & Edge colorings of $G_n$ & Exact reduction here shows the values are classical Ramsey values in prime index \\
Classical Ramsey search & Edge colorings of $K_N$ & Supplies the edge-coprime input; vertex-coprime collapses further \\
\bottomrule
\end{tabular}

\end{table}

\section{What Remains Nontrivial}

The vertex-coloring coprime Ramsey numbers themselves are complete after
Theorem~\ref{thm:main}.  The clique edge-coloring variant is also
structurally resolved by \Cref{thm:edge-reduction}: any remaining numerical
uncertainty is exactly the uncertainty in the corresponding classical Ramsey
number.  Further value comes from variants where neither the one-universal
support theorem nor the complete-witness edge reduction applies directly:

\begin{problem}[Classical-to-coprime edge bounds]
Translate the best known classical Ramsey bounds into edge-coprime bounds.
For instance, the current $43\le R(5,5)\le 46$ window
\cite{radziszowski2026,angeltveit-mckay2024} gives
$181\le \Redge(5;2)\le 197$.
\end{problem}

\begin{problem}[Non-complete edge substructures]
Determine exact thresholds for edge-coloring targets such as paths, cycles,
trees, and complete bipartite graphs in $G_n$.  \Cref{prop:substructure-transfer}
gives the prime-clique upper bound, but the lower-bound pullback is no longer
valid because labels need not be injective on non-adjacent vertices of a
target.  The non-prime path and cycle values in
\Cref{tab:towell-variant-positioning} are concrete evidence that this is a
different regime.
\end{problem}

\begin{problem}[Density-constrained vertex colorings]
The exact balanced two-color diagonal endpoint is closed by
\Cref{thm:balanced-exact}, and the two-color off-diagonal endpoint is closed
by \Cref{cor:balanced-off-diagonal}.  However
\Cref{tab:balanced-multicolor-endpoint} shows that the same endpoint can fail
for small multicolor balance, while
\Cref{thm:eventual-multicolor-balanced} shows that every fixed color count has
an eventual exact endpoint.  Meanwhile \Cref{cor:density-window} gives a
certified two-color density window of relative width $O(1/\log k)$ around
$1/2$.  Determine the finite multicolor transition thresholds and the
prescribed-density transition for two colors away from this window.
\end{problem}

\begin{problem}[Intervals and shifted coprime graphs]
Replace $[n]$ by intervals $\{m+1,\ldots,m+n\}$.  The vertex $1$ and the
initial prime clique disappear, and the computations in
\Cref{tab:shifted-intervals,fig:shifted-intervals}, together with the
one-sided bound in \Cref{prop:shifted-upper}, show a genuine dependence on
the local distribution of prime factors.
\end{problem}

\begin{problem}[Other arithmetic graphs]
Study analogous vertex-coloring Ramsey thresholds for graphs defined by
conditions such as $\gcd(a,b)\in D$, squarefree kernels, or coprimality in
rings of integers.  The squarefree-kernel case itself is covered by
\Cref{cor:squarefree-kernel}; the open direction is to classify variants that
add adjacencies not explained by disjoint supports.
\end{problem}

\begin{problem}[Certificate-aware search tools]
Develop Ramsey-search tools that test for support or label certificates before
expanding a large SAT or MILP formulation.  In the coprime graph, this means
extracting prime-divisor supports, checking the clique-label rank, and reducing
the instance to bin packing, classical Ramsey data, or a flow problem whenever
the corresponding certificate applies.  Only the residual cases---for example
non-complete edge targets, shifted intervals, and prescribed-density
constraints outside the certificate window---should be handed to a brute-force
solver.  Such a pipeline would turn the failure mode of the direct SAT
encoding into a diagnostic: a hard instance is one whose obstruction survives
after the support-disjointness and clique-label reductions have been removed.
\end{problem}

\Cref{tab:variant-roadmap} condenses these open directions by identifying
which part of the certificate fails in each variant.

\begin{table}[htbp]
\centering
\caption{Research roadmap after the exact vertex-coloring theorem.}
\label{tab:variant-roadmap}
\small
\begin{tabular}{@{}>{\raggedright\arraybackslash}p{0.21\linewidth}>{\raggedright\arraybackslash}p{0.34\linewidth}>{\raggedright\arraybackslash}p{0.35\linewidth}@{}}
\toprule
Variant & Why the current proof may fail & Suggested next step \\
\midrule
Edge coloring of $G_n$ & Vertex prime-bin proof does not apply directly & Import classical Ramsey bounds through the prime-index map \\
Shifted intervals $\{m+1,\ldots,m+n\}$ & Vertex $1$ and the initial prime clique disappear & Study local prime-factor cliques and interval-dependent thresholds \\
Density-constrained vertex coloring & The exact half-balanced two-color endpoint is solved, but other densities need not align with the skip-2 split & Test mixed, multicolor, and prescribed-density variants \\
Other gcd graphs & Prime-divisor injection may not match clique structure exactly & Identify divisor certificates for each graph family \\
General host graphs with arithmetic labels & The prime-bin proof depends on complete coprime adjacency inside bins & Determine which label systems preserve exact partition thresholds \\
Certificate-aware search tools & Direct SAT hides support and label structure & Preprocess by atoms, clique-label rank, prime-bin capacity, and flow feasibility before exhaustive search \\
\bottomrule
\end{tabular}

\end{table}

\section{Conclusion}

The main theorem gives a complete mixed multicolor solution for vertex
colorings of the integer coprime graph:
\[
  \Rcop(k_1,\ldots,k_c)=p_{\sum_i(k_i-1)}.
\]
The proof has one object doing both jobs.  The prime clique forces the upper
bound, and the prime-bin coloring supplies the matching lower-bound
certificate.  What initially looks like a large SAT frontier is therefore a
prime-index threshold.

The support-disjointness theorem explains why the proof works.  The real
resource is not the interval $[n]$ itself, but an atom system in which cliques
inject into disjoint supports.  The certificate primitive makes this
structural statement algorithmic: given supports, it checks whether the
Ramsey instance belongs to this class and then returns either the forcing
clique or the avoiding coloring.  The squarefree-kernel formulation is a
formal instance of the same support theorem.

The clique edge-coloring variant behaves differently but is also resolved
structurally.  It does not produce new arithmetic Ramsey constants:
\[
  \Redge(k_1,\ldots,k_c)=p_{\Rcl(k_1,\ldots,k_c)-1}.
\]
Thus the clique edge problem inherits the classical complete-graph Ramsey
table exactly, including both its known values and its open numerical gaps.
Non-complete edge targets such as paths and cycles lie outside this exact
pullback and remain separate local-structure questions.

The balanced-coloring objection is closed as well.  The canonical prime-bin
coloring can be very unbalanced, but the deterministic split
$\{3,5,\ldots,p_{k-1}\}$ versus $\{2,p_k,\ldots,p_{2k-3}\}$ leaves exactly
$k-2$ flexible composites $2p_2,\ldots,2p_{k-1}$ available for rebalancing,
and the elementary inequality $2p_m<p_{2m}<3p_m$ for $m\ge2$ is enough to
count both the forced and the flexible sides exactly.  Thus the balanced
two-color diagonal threshold is not an additional obstruction; it is the
same prime-index threshold $p_{2k-2}$.  The same construction also realizes a
small density window around $1/2$, and it extends to all two-color
off-diagonal demands, so exact balance is not an isolated diagonal point.
This exact balance phenomenon is also sharply scoped: for three or more
colors the same endpoint can already fail at $k=3$, while a Hall-theoretic
prime-bin argument proves that the endpoint becomes exact again for every
fixed color count once $k$ is sufficiently large.  Thus multicolor balance is a
genuine density-constrained variant with finite defects and an eventual
certificate regime, rather than a formal corollary of the two-color
construction.

The same formula also survives a small change from partitions to covers:
covering the prime clique still requires total capacity
\(\sum_i(k_i-1)\).  On the other hand, changing the host itself can destroy
the certificate.  Shifted intervals retain a prime-clique upper bound, but the
naive interval-prime lower-bound construction fails throughout the expanded
scan, which is a useful warning about overextending the formula.

The broader lesson is representation-dependent.  Some Ramsey problems are
genuinely governed by random, pseudorandom, SAT-based, or large language
model (LLM)-assisted search.  In this arithmetic host graph, the right
representation collapses the search to a small prime certificate.  The most
interesting next problems are therefore those that deliberately break this
certificate, such as shifted intervals or arithmetic adjacency rules where
support-disjointness no longer explains all edges.  Even there, the
prime-clique upper bound gives a first anchor point for the shifted-interval
thresholds.

This suggests a tool-building direction rather than only a list of open
values.  A useful arithmetic-Ramsey solver should first search for a compact
representation: support atoms, clique-label rank, prime-bin capacity, and
flow feasibility under density constraints.  SAT or MILP search should then
be applied to the residual instance after those certificate-controlled parts
have been collapsed.  In this sense, a failed direct SAT search is not just a
computational obstacle; it is a signal to look for the representation that
the encoding has hidden.

\appendix

\section{Proof Architecture}
\label{app:proof-anatomy}

The proof of Theorem~\ref{thm:main} is short because all obstruction and all
construction pass through the same object: prime divisors.  This appendix
spells out the mechanism in a way that is useful for checking variants.

\subsection{Pairwise-coprime packing}

For $A\subseteq[n]$, write
\[
  \nu(A)=\max\{|S|:S\subseteq A\text{ and }S\text{ is pairwise coprime}\}.
\]
The vertex-coloring problem asks whether every partition
$[n]=A_1\cup\cdots\cup A_c$ has $\nu(A_i)\ge k_i$ for some $i$.
For a set $A$ not containing $1$, every pairwise-coprime subset of $A$
injects into the set of prime divisors used by $A$: choose one prime divisor
from each selected integer.  Pairwise coprimality makes the choices distinct.
If $1\in A$, it can add exactly one extra element to such a packing.

Thus the lower-bound coloring is not a heuristic.  If every non-one integer
colored $i$ is assigned a witness prime in a bin $B_i$, then
\[
  \nu(A_i)\le |B_i|+\mathbf{1}_{1\in A_i}.
\]
The capacities in Theorem~\ref{thm:main} are exactly the largest bin sizes
that keep these quantities below $k_i$.

\subsection{\texorpdfstring{Why one color has capacity $k_1-2$}{Why one color has capacity k1-2}}

The asymmetry in the proof is only bookkeeping.  Vertex $1$ is coprime to
every other vertex, so whichever color receives $1$ has one unit of clique
capacity already spent.  We put $1$ in color $1$ and assign that color only
$k_1-2$ witness primes.  Any other color can receive $k_i-1$ witness primes.
Because the total number of primes below the threshold is at most
$\sum_i(k_i-1)-1$, the capacities exactly fit.

\subsection{Mixed examples}

The same formula simultaneously gives diagonal, off-diagonal, and multicolor
values.  The following examples are included mainly to prevent a common
misreading: there is no separate SAT frontier for the off-diagonal or
multicolor vertex problem.  \Cref{tab:mixed-examples} gives representative
values, and \Cref{fig:off-diagonal-heatmap} visualizes the prime-index
diagonals in the two-color off-diagonal case.

\begin{table}[htbp]
\centering
\caption{Sample mixed vertex-coprime Ramsey values from the exact formula.}
\label{tab:mixed-examples}
\begin{tabular}{llr}
\toprule
Demand type & Parameters & Exact value \\
\midrule
Diagonal, 3 colors & $\Rcop(3;3)=p_6$ & 13 \\
Diagonal, 4 colors & $\Rcop(3;4)=p_8$ & 19 \\
Diagonal, 5 colors & $\Rcop(3;5)=p_{10}$ & 29 \\
Diagonal, 6 colors & $\Rcop(3;6)=p_{12}$ & 37 \\
Off diagonal & $\Rcop(3,4)=p_5$ & 11 \\
Off diagonal & $\Rcop(3,5)=p_6$ & 13 \\
Off diagonal & $\Rcop(4,5)=p_7$ & 17 \\
Off diagonal & $\Rcop(5,7)=p_{10}$ & 29 \\
\bottomrule
\end{tabular}

\end{table}

\begin{figure}[htbp]
\centering
\includegraphics[width=0.66\linewidth]{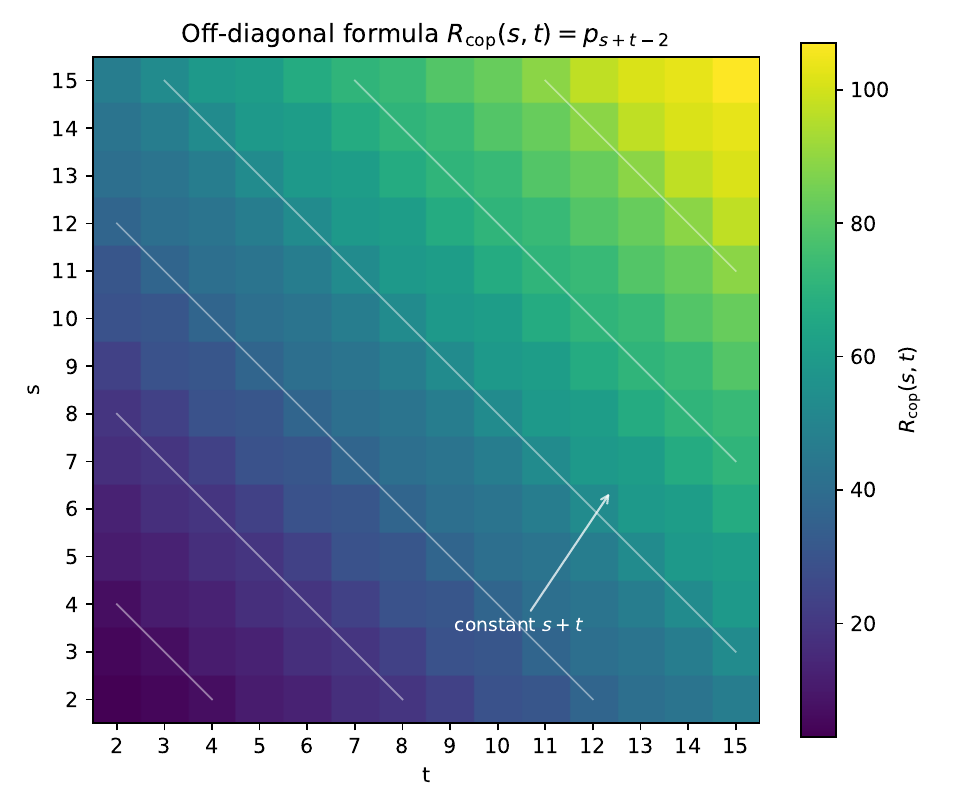}
\caption{Supplementary visualization of the off-diagonal formula: the values
form prime-index diagonals $\Rcop(s,t)=p_{s+t-2}$.}
\label{fig:off-diagonal-heatmap}
\end{figure}

\section{Computational Record, Literature Notes, and Supplementary Diagnostics}
\label{app:reproducibility}

The theorem is independent of computation, but the finite tables and figures
are reproducible.  The companion repository contains Python scripts and
generated data for the exact values, edge-reduction table, support-certificate
examples, SAT-scale diagnostics, balanced-coloring MILP and certificate-family
experiments, shifted-interval instances, and literature-search notes.  The
scripts use the standard library together with NumPy, SciPy (including the
HiGHS-backed MILP interface), pycosat for small conjunctive normal form (CNF)
checks, and Matplotlib.
The exact formula and
edge-reduction computations are deterministic; randomized balanced-witness
searches use a fixed seed and validate every reported witness against the
prime-bin certificate.  The README in the companion repository records the
full command list for regenerating the artifacts.  The largest certificate
scans reported here verify the density-window formula through $k=100{,}000$,
the off-diagonal balanced construction on the grid $2\le s,t\le1000$, the
multicolor certificate-family regime for $3\le c\le10$ and $k\le1000$, an
extended multicolor phase scan through $c=30$, and the shifted-interval
lower-bound diagnostic on $2495$ parameter choices.  The
shifted exact MILP scan is complete for $2\le m\le100$ and $k=3,4,5$; at
$k=6$, the same formulation already has resource-limited instances.
\Cref{tab:edge-bound-transfer} is included as a direct reference table for
the edge-coprime reduction, while
\Cref{tab:prime-index-check,tab:density-window-summary,tab:density-window-verify}
record the finite checks behind the balanced and density-window diagnostics.

\begin{table}[htbp]
\centering
\caption{Selected classical-to-edge-coprime bound translations.  The
classical two-color complete-graph bounds are taken from Radziszowski's
\emph{Small Ramsey Numbers}, Dynamic Survey DS1 revision 18, Tables Ia/Ib
\cite{radziszowski2026}; the
third column is the direct prime-index image
$p_{L-1}\le \Redge(k,\ell)\le p_{U-1}$ of each classical window
$L\le R(k,\ell)\le U$.}
\label{tab:edge-bound-transfer}
\small
\begin{tabular}{@{}lrr@{}}
\toprule
Classical parameter & $R(k,\ell)$ & translated $\Redge(k,\ell)$ \\
\midrule
R(3,3) & 6 & 11 \\
R(3,4) & 9 & 19 \\
R(3,5) & 14 & 41 \\
R(3,6) & 18 & 59 \\
R(3,7) & 23 & 79 \\
R(3,8) & 28 & 103 \\
R(3,9) & 36 & 149 \\
R(3,10) & 40--41 & 167--173 \\
R(4,4) & 18 & 59 \\
R(4,5) & 25 & 89 \\
R(4,6) & 36--40 & 149--167 \\
R(4,7) & 49--58 & 223--269 \\
R(4,8) & 59--79 & 271--397 \\
R(4,9) & 73--105 & 359--569 \\
R(4,10) & 92--135 & 467--757 \\
R(5,5) & 43--46 & 181--197 \\
R(5,6) & 59--85 & 271--433 \\
R(5,7) & 80--133 & 401--743 \\
R(5,8) & 101--193 & 541--1163 \\
R(5,9) & 133--282 & 743--1823 \\
R(5,10) & 149--381 & 857--2617 \\
R(6,6) & 102--160 & 547--937 \\
R(6,7) & 115--270 & 619--1723 \\
R(6,8) & 134--423 & 751--2917 \\
R(6,9) & 183--651 & 1091--4831 \\
R(6,10) & 204--944 & 1237--7451 \\
R(7,7) & 205--492 & 1249--3517 \\
R(7,8) & 219--832 & 1361--6373 \\
R(7,9) & 252--1368 & 1597--11311 \\
R(7,10) & 292--2119 & 1901--18493 \\
\bottomrule
\end{tabular}

\end{table}

\begin{table}[htbp]
\centering
\caption{Supplementary finite verification for the weak prime-index inequality
$2p_m<p_{2m}<3p_m$.  The proof only needs the finite range below Dusart's
explicit threshold, but the check was also extended well beyond it.}
\label{tab:prime-index-check}
\small
\begin{tabular}{@{}lrr@{}}
\toprule
finite check & value & attained at \\
\midrule
range checked & $2\le m\le 1000000$ & -- \\
$p_{2m}-2p_m$ minimum & 1 & $m=2$ \\
$3p_m-p_{2m}$ minimum & 2 & $m=2$ \\
\bottomrule
\end{tabular}

\end{table}

\begin{table}[htbp]
\centering
\caption{Large-scale verification summary for the density-window construction.
For $k>500$, the scan uses the closed-form counts from the proof and verifies
the prime-index inequalities needed for the flexible vertices.}
\label{tab:density-window-summary}
\small
\begin{tabular}{@{}rrrr@{}}
\toprule
$k$ range & enumerated through & rows checked & all verified \\
\midrule
3--100000 & 500 & 99998 & yes \\
\bottomrule
\end{tabular}

\end{table}

\begin{table}[htbp]
\centering
\caption{Verification of the density window construction
(\Cref{cor:density-window}).  $F_0$ matches the theoretical value
$n/2-(k-2)$ in every case, and the full window is realizable.}
\label{tab:density-window-verify}
\small
\begin{tabular}{@{}rrrrrrr@{}}
\toprule
$k$ & $n$ & $F_0$ (base) & $F_0$ (theorem) & window size & $F_0$ matches? & all realizable? \\
\midrule
3 & 6 & 2 & 2 & 3 & yes & yes \\
4 & 12 & 4 & 4 & 5 & yes & yes \\
5 & 18 & 6 & 6 & 7 & yes & yes \\
6 & 28 & 10 & 10 & 9 & yes & yes \\
7 & 36 & 13 & 13 & 11 & yes & yes \\
8 & 42 & 15 & 15 & 13 & yes & yes \\
9 & 52 & 19 & 19 & 15 & yes & yes \\
10 & 60 & 22 & 22 & 17 & yes & yes \\
15 & 106 & 40 & 40 & 27 & yes & yes \\
20 & 162 & 63 & 63 & 37 & yes & yes \\
30 & 270 & 107 & 107 & 57 & yes & yes \\
50 & 520 & 212 & 212 & 97 & yes & yes \\
100 & 1212 & 508 & 508 & 197 & yes & yes \\
200 & 2728 & 1166 & 1166 & 397 & yes & yes \\
500 & 7900 & 3452 & 3452 & 997 & yes & yes \\
1000 & 17382 & 7693 & 7693 & 1997 & yes & yes \\
10000 & 224716 & 102360 & 102360 & 19997 & yes & yes \\
100000 & 2750122 & 1275063 & 1275063 & 199997 & yes & yes \\
\bottomrule
\end{tabular}

\end{table}

\begin{table}[htbp]
\centering
\caption{Grid verification summary for the off-diagonal balanced construction
(\Cref{cor:balanced-off-diagonal}).}
\label{tab:balanced-off-diagonal-grid}
\small
\begin{tabular}{@{}rrrr@{}}
\toprule
$s,t$ range & pairs checked & max prime index & all verified \\
\midrule
2--1000 & 998001 & 1998 & yes \\
\bottomrule
\end{tabular}

\end{table}

\begin{table}[htbp]
\centering
\caption{Verification of the off-diagonal balanced construction
(\Cref{cor:balanced-off-diagonal}) for selected $(s,t)$.  The ``flexible''
column counts all available flexible composites in the interval; the proof
explicitly assigns only the $b-2$ composites $2p_2,\ldots,2p_{b-1}$ needed
to reach exact balance.}
\label{tab:balanced-off-diagonal-verify}
\small
\begin{tabular}{@{}rrrrrrr@{}}
\toprule
$s$ & $t$ & $n$ & $F_0$ & $F_1$ & flexible & balanced? \\
\midrule
3 & 4 & 10 & 4 & 4 & 2 & yes \\
3 & 10 & 30 & 5 & 14 & 11 & yes \\
10 & 30 & 162 & 15 & 73 & 74 & yes \\
50 & 50 & 520 & 212 & 63 & 245 & yes \\
100 & 150 & 1570 & 108 & 687 & 775 & yes \\
1000 & 1000 & 17382 & 7693 & 1095 & 8594 & yes \\
\bottomrule
\end{tabular}

\end{table}

\subsection{\texorpdfstring{SAT Encoding and the Superseded $R_{\rm cop}(10)=53$ Claim}{SAT Encoding and the Superseded Rcop(10)=53 Claim}}
\label{app:sat-forensics}

The original exploratory route treated the problem as a SAT search.  That
route was useful, but it also produced the main false lead.  We record the
correct encoding and the failure mode.

\subsubsection{Correct direct encoding}

For fixed $n,k,c$, introduce Boolean variables $x_{v,i}$ for
$v\in[n]$ and $i\in\{1,\ldots,c\}$, where $x_{v,i}$ means that vertex $v$
has color $i$.  The coloring constraints are
\[
  x_{v,1}\vee\cdots\vee x_{v,c}
\]
for every $v$, plus pairwise clauses
\[
  \neg x_{v,i}\vee \neg x_{v,j}\qquad (i<j)
\]
to force at most one color.  For every coprime $k$-clique
$K\subseteq[n]$ and every color $i$, add
\[
  \bigvee_{v\in K}\neg x_{v,i}.
\]
The formula is satisfiable exactly when there is a $c$-coloring of $G_n$
with no monochromatic coprime $K_k$.

\subsubsection{Why direct SAT looks hard}

At the exact threshold for $k=10$, the direct formula contains more than six
million coprime $K_{10}$ constraints and more than twelve million
anti-monochromatic clauses.  This is a real encoding explosion.  The theorem
shows that it is also avoidable: the unsatisfiability certificate only needs
the $19$-vertex prime clique at $n=61$.

\subsubsection{The invalid symmetry breaking}

The old computation for $R_{\rm cop}(10)$ tried to add prime-based symmetry
breaking by assigning the first $k-1$ primes distinct colors.  That operation
is sound only when the number of colors is at least $k-1$ and the intended
symmetry group permits that normalization.  It is not sound for a two-color
instance with $k=10$.

Worse, the old variable-indexing code accepted color indices outside the
range $\{0,1\}$.  A unit clause intended to mean ``prime $p$ has color $i$''
for $i>1$ was mapped to an unrelated SAT variable rather than rejected.  The
solver was therefore refuting a different formula.  The claim
$R_{\rm cop}(10;2)=53$ is superseded by the exact theorem and by the explicit
extremal coloring at $n=60$.

\subsubsection{Correct interpretation of the old frontier}

The value $53$ is meaningful, but for the adjacent problem:
\[
  \Rcop(9;2)=p_{16}=53.
\]
The correct two-color diagonal value for $k=10$ is
\[
  \Rcop(10;2)=p_{18}=61.
\]
At $n=60$, the prime-bin coloring avoids monochromatic $K_{10}$; at $n=61$,
the prime clique has $19$ vertices and pigeonhole forces ten in one color.
\Cref{fig:k10-coloring} shows the explicit extremal witness at $n=60$.

\begin{figure}[htbp]
\centering
\includegraphics[width=0.92\linewidth]{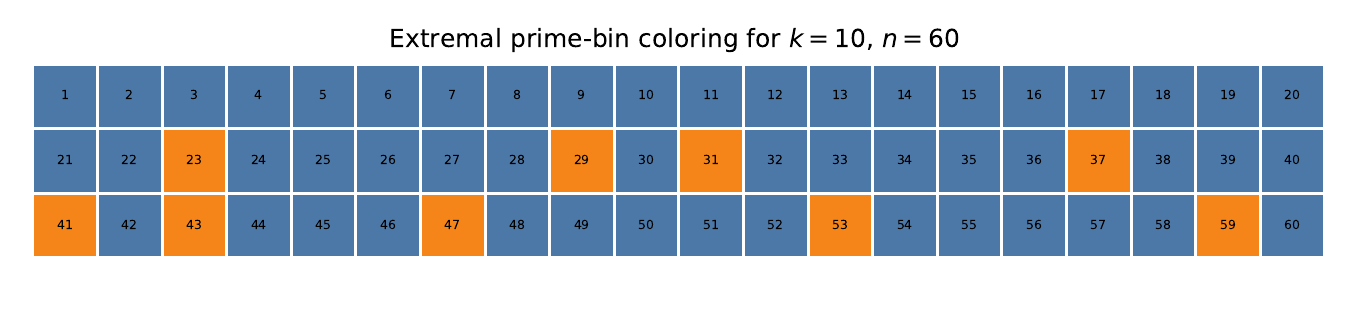}
\caption{One extremal prime-bin coloring at $k=10,n=60$.  Blue and orange
squares are the two color classes in a canonical prime-bin witness.  The
visible imbalance is diagnostic only; the exact balanced theorem is proved
independently in \Cref{thm:balanced-exact}.}
\label{fig:k10-coloring}
\end{figure}

\subsection{Balanced Colorings in More Detail}
\label{app:balanced}

Balanced coloring was the most useful side branch after the main theorem.
It tests whether the exact threshold is merely exploiting an obviously
unbalanced construction.  The answer is no exactly for the two-color
diagonal endpoint, and the experiments below show how the deterministic
proof was found.

\subsubsection{Exact MILP}

For the two-color balanced variant, variables $x_v\in\{0,1\}$ encode the
color of vertex $v$.  For every coprime $K_k$,
\[
  1 \le \sum_{v\in K} x_v \le k-1
\]
prevents monochromatic cliques in either color.  The near-balance constraint
is
\[
  \left|2\sum_{v=1}^n x_v-n\right|\le 1.
\]
Solving this model at $n=\Rcop(k;2)-1$ and using the unrestricted theorem
for the upper bound proves that the largest balanced avoiding endpoint is
the unrestricted endpoint whenever the MILP is feasible.  This gives
equality for $k=3,\ldots,9$; see
\Cref{tab:balanced-thresholds,fig:balanced}.

\begin{table}[htbp]
\centering
\caption{Near-balanced thresholds computed by exact MILP.}
\label{tab:balanced-thresholds}
\begin{tabular}{rrrrr}
\toprule
$k$ & balanced threshold & unrestricted & gap & last feasible \\
\midrule
3 & 7 & 7 & 0 & 6 \\
4 & 13 & 13 & 0 & 12 \\
5 & 19 & 19 & 0 & 18 \\
6 & 29 & 29 & 0 & 28 \\
7 & 37 & 37 & 0 & 36 \\
8 & 43 & 43 & 0 & 42 \\
9 & 53 & 53 & 0 & 52 \\
\bottomrule
\end{tabular}

\end{table}

\begin{figure}[htbp]
\centering
\includegraphics[width=0.74\linewidth]{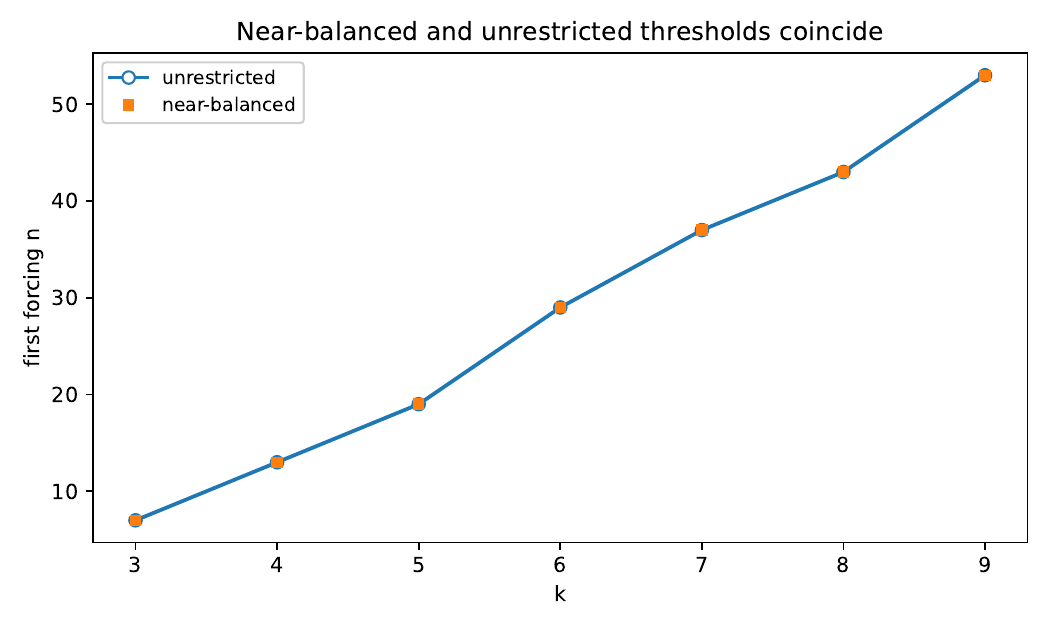}
\caption{For $k=3,\ldots,9$, near-balanced two-colorings exist up to the same
last feasible $n$ as unrestricted colorings.  The two sequences coincide; the
markers and line styles are chosen so both computed thresholds remain visible.}
\label{fig:balanced}
\end{figure}

\subsubsection{Constructive prime-bin witnesses}

The MILP treats every coprime clique explicitly.  A more structural search
stays inside the proof certificate.  At $n=p_{2k-2}-1$, there are $2k-3$
primes.  Choose $k-2$ of them for the bin containing vertex $1$ and put the
remaining $k-1$ primes in the other bin.  Integers whose prime divisors meet
only one bin are forced; integers whose prime divisors meet both bins are
flexible.

For $k=10$, one strict balanced witness uses
\[
  B_0=\{2,3,7,17,31,37,41,43\}
\]
and
\[
  B_1=\{5,11,13,19,23,29,47,53,59\}.
\]
At $n=60$, the set forced into color $0$ has size $30$, the set forced into
color $1$ has size $11$, and there are $19$ flexible composites.  Assigning
all flexible composites to color $1$ gives a $30{:}30$ coloring.  No
monochromatic $K_{10}$ exists: color $0$ has at most eight non-one witness
primes plus vertex $1$, and color $1$ has at most nine witness primes.

\Cref{thm:balanced-exact} proves the balanced diagonal endpoint exactly:
for every $k\ge2$, the largest $n$ admitting a balanced two-coloring of
$G_n$ with no monochromatic $K_k$ is $p_{2k-2}-1$.  The experiments below are
therefore no longer presented as evidence for an unresolved statement.  They
record how the deterministic construction was found and stress-test nearby
prime-bin choices.  The constructive witnesses in
\Cref{tab:balanced-prime-bin,fig:balanced-prime-bin} keep the search inside
the prime-bin certificate family.

\begin{table}[htbp]
\centering
\caption{Constructive near-balanced witnesses inside the prime-bin certificate family.}
\label{tab:balanced-prime-bin}
\begin{tabular}{rrrrrr}
\toprule
$k$ & $R-1$ & checked splits & color sizes & reachable color-0 interval & time (s) \\
\midrule
3 & 6 & 1 & 3:3 & [3, 4] & 0.000 \\
4 & 12 & 2 & 6:6 & [6, 8] & 0.000 \\
5 & 18 & 6 & 9:9 & [9, 13] & 0.000 \\
6 & 28 & 19 & 14:14 & [14, 22] & 0.000 \\
7 & 36 & 69 & 18:18 & [18, 28] & 0.000 \\
8 & 42 & 231 & 21:21 & [21, 33] & 0.001 \\
9 & 52 & 771 & 26:26 & [26, 43] & 0.004 \\
10 & 60 & 3,003 & 30:30 & [30, 49] & 0.016 \\
11 & 70 & 7,491 & 35:35 & [35, 59] & 0.045 \\
12 & 78 & 28,309 & 39:39 & [39, 65] & 0.185 \\
13 & 88 & 101,625 & 44:44 & [44, 74] & 0.794 \\
14 & 100 & 329,416 & 50:50 & [50, 85] & 2.743 \\
15 & 106 & 1,290,136 & 53:53 & [53, 90] & 11.421 \\
16 & 112 & 5,136,924 & 56:56 & [56, 95] & 48.627 \\
\bottomrule
\end{tabular}

\end{table}

\begin{figure}[htbp]
\centering
\includegraphics[width=0.74\linewidth]{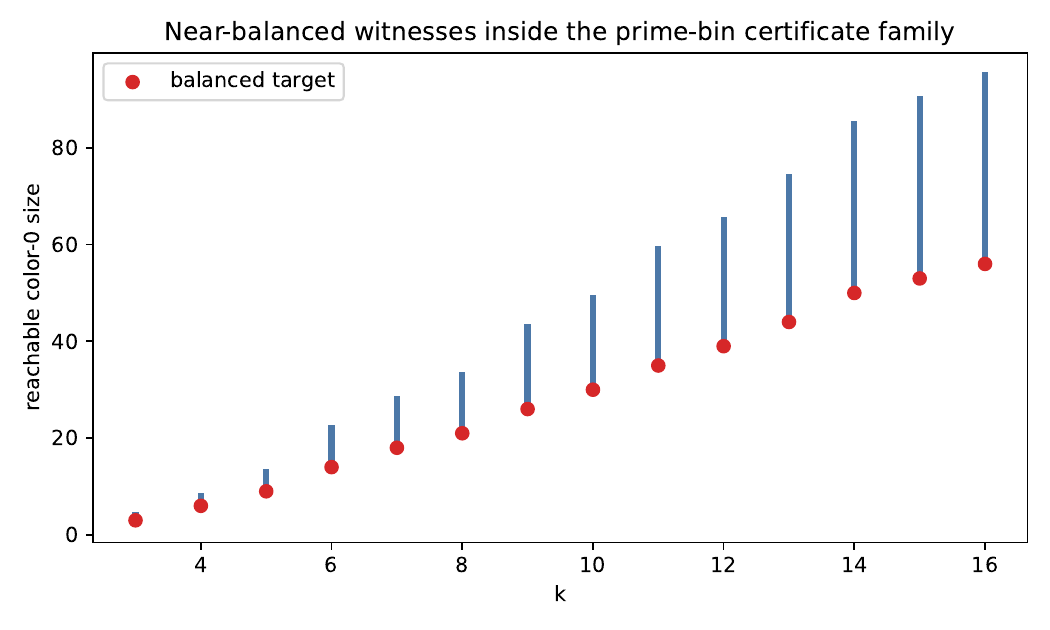}
\caption{For $k=3,\ldots,16$, the reachable color-0 interval inside one
prime-bin split contains the balanced target at $n=\Rcop(k;2)-1$.  Blue
segments show the reachable intervals and red markers show the balanced
targets.}
\label{fig:balanced-prime-bin}
\end{figure}

\paragraph{Randomized extension.}
The exhaustive search above enumerates prime-bin splits and is therefore
limited by $\binom{2k-3}{k-2}$.  For larger $k$, a randomized search instead
samples prime splits and validates successes by the same certificate.  With
a fixed random seed, it found near-balanced witnesses for every
$k=17,\ldots,80$ using at most three attempts per $k$ in the recorded run.

\begin{table}[htbp]
\centering
\caption{Sample rows from the randomized balanced prime-bin search.  The
search is heuristic, but each reported row is a validated certificate.}
\label{tab:balanced-random-prime-bin}
\small
\begin{tabular}{@{}rrrrr@{}}
\toprule
$k$ & $R-1$ & attempts & color sizes & reachable interval \\
\midrule
17 & 130 & 2 & 65:65 & [56, 106] \\
20 & 162 & 1 & 81:81 & [42, 117] \\
25 & 222 & 1 & 111:111 & [43, 142] \\
30 & 270 & 3 & 135:135 & [55, 174] \\
40 & 396 & 2 & 198:198 & [58, 238] \\
50 & 520 & 1 & 260:260 & [71, 290] \\
60 & 646 & 2 & 323:323 & [165, 504] \\
80 & 928 & 1 & 464:464 & [228, 729] \\
\bottomrule
\end{tabular}

\end{table}

Before the deterministic split was identified, random prime-bin choices
already showed that balanced witnesses were abundant inside the same
certificate family; \Cref{tab:balanced-random-prime-bin} records that
diagnostic evidence.

\paragraph{Deterministic bin choices.}
To look for a proof of the exact endpoint, we tested simple closed-form prime
splits up to $k=500$.  The canonical ``smallest primes'' choice is usually
bad, but three simple choices succeeded for every tested $k$: put
$B_0=\{3,5,\ldots,p_{k-1}\}$, or take alternating prime indices starting at
either parity.  The first of these is the split proved in
\Cref{thm:balanced-exact}; the alternating splits remain useful diagnostics.
\Cref{tab:balanced-deterministic,fig:balanced-deterministic-margins,fig:balanced-skip2-forced}
show the deterministic margins that led to the proof.

\begin{table}[htbp]
\centering
\caption{Deterministic prime-bin choices tested for balanced witnesses at
$n=p_{2k-2}-1$, for $k=3,\ldots,500$.}
\label{tab:balanced-deterministic}
\small
\begin{tabular}{@{}lrrrr@{}}
\toprule
strategy & successes & tested & first failure & worst $\max(F_0,F_1)/(n/2)$ \\
\midrule
smallest primes & 1 & 498 & 4 & 1.862 \\
largest primes & 0 & 498 & 3 & 1.862 \\
skip-2 then small & 498 & 498 & -- & 1.000 \\
alternating start 0 & 498 & 498 & -- & 1.000 \\
alternating start 1 & 498 & 498 & -- & 1.000 \\
middle block & 3 & 498 & 5 & 1.752 \\
\bottomrule
\end{tabular}

\end{table}

\begin{figure}[htbp]
\centering
\includegraphics[width=0.74\linewidth]{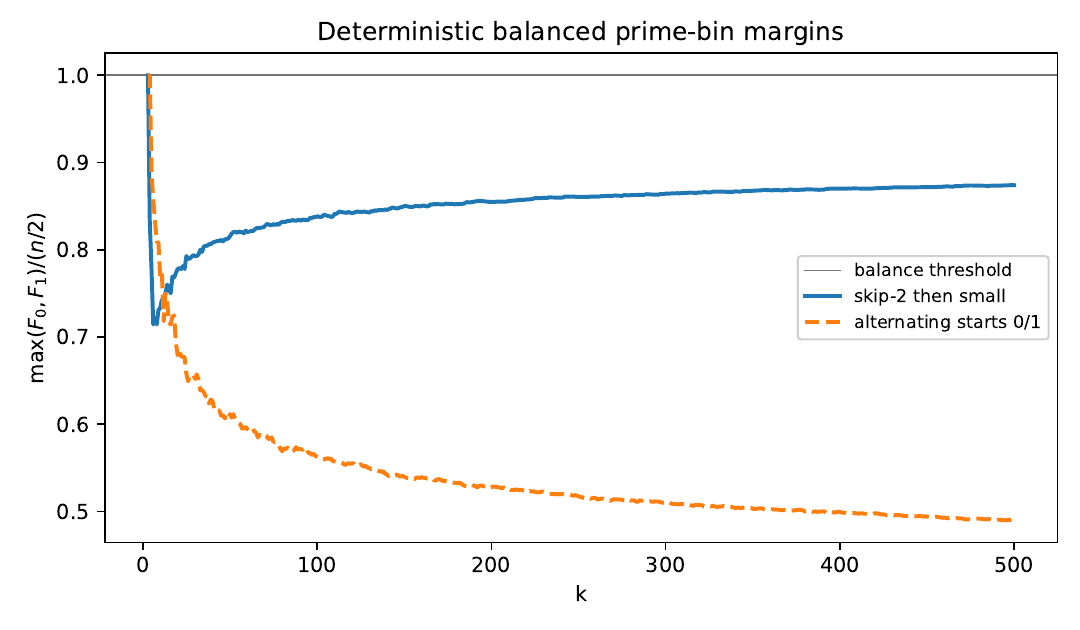}
\caption{For the deterministic splits that succeeded for
$k=3,\ldots,500$, the larger forced side stays at or below the balanced
threshold $n/2$; here $F_i$ is the set forced into color $i$ by the chosen
prime-bin split.  The two alternating starts have identical margins and are
shown as one dashed curve.  The skip-2 split is proved in
\Cref{thm:balanced-exact}.}
\label{fig:balanced-deterministic-margins}
\end{figure}

\begin{figure}[htbp]
\centering
\includegraphics[width=0.74\linewidth]{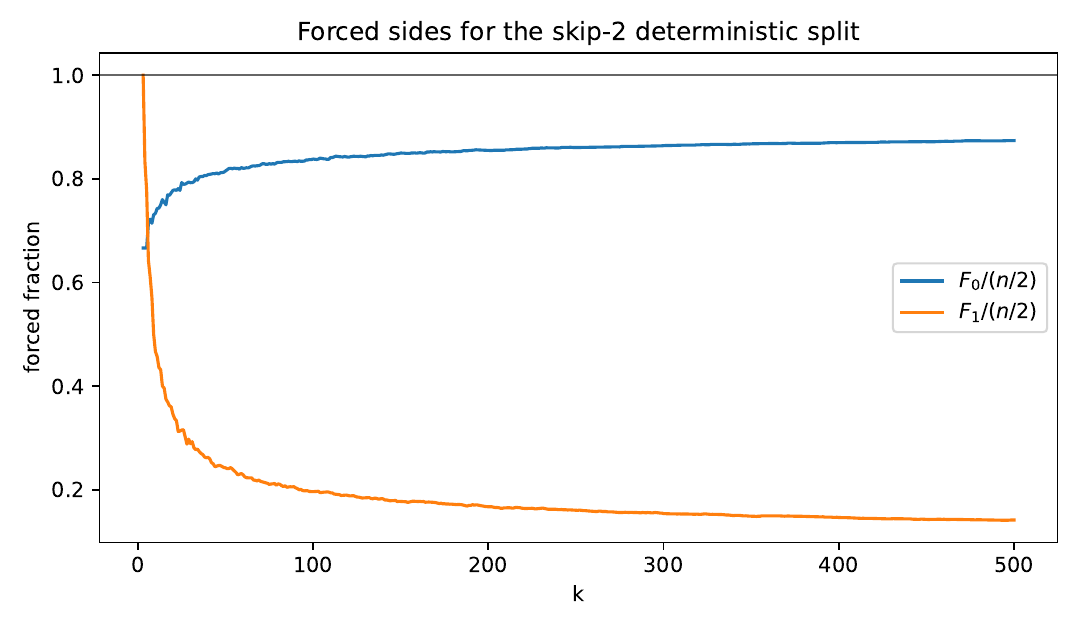}
\caption{For the proved skip-2 split, the color-0 forced side has the exact
formula $F_0=n/2-(k-2)$; the color-1 forced side is much smaller in the
tested range.  The remaining vertices are flexible and can be assigned
without losing the divisor certificate.}
\label{fig:balanced-skip2-forced}
\end{figure}

\paragraph{Multicolor certificate-family diagnostic.}
The exact endpoint failures in \Cref{tab:balanced-multicolor-endpoint} do
not mean that multicolor balance is structurally hopeless.  In the
prime-bin certificate family, round-robin prime partitions become feasible
after a small initial range for each tested color count $3\le c\le10$ in the
full $k\le1000$ scan; see
\Cref{tab:balanced-multicolor-summary,fig:balanced-multicolor-margins}.

\begin{table}[htbp]
\centering
\caption{Multicolor balanced searches inside the prime-bin certificate
family at $n=p_{c(k-1)}-1$, for $k=3,\ldots,1000$.  The displayed strategy is
the start-$0$ round-robin prime partition.}
\label{tab:balanced-multicolor-summary}
\small
\begin{tabular}{@{}rlrrrr@{}}
\toprule
$c$ & strategy & successes & tested & first failure & all-success from \\
\midrule
3 & round-robin start 0 & 995 & 998 & 3 & 6 \\
4 & round-robin start 0 & 993 & 998 & 3 & 8 \\
5 & round-robin start 0 & 986 & 998 & 3 & 15 \\
6 & round-robin start 0 & 985 & 998 & 3 & 16 \\
7 & round-robin start 0 & 977 & 998 & 3 & 24 \\
8 & round-robin start 0 & 973 & 998 & 3 & 28 \\
9 & round-robin start 0 & 964 & 998 & 3 & 37 \\
10 & round-robin start 0 & 948 & 998 & 3 & 53 \\
\bottomrule
\end{tabular}

\end{table}

\begin{figure}[htbp]
\centering
\includegraphics[width=0.74\linewidth]{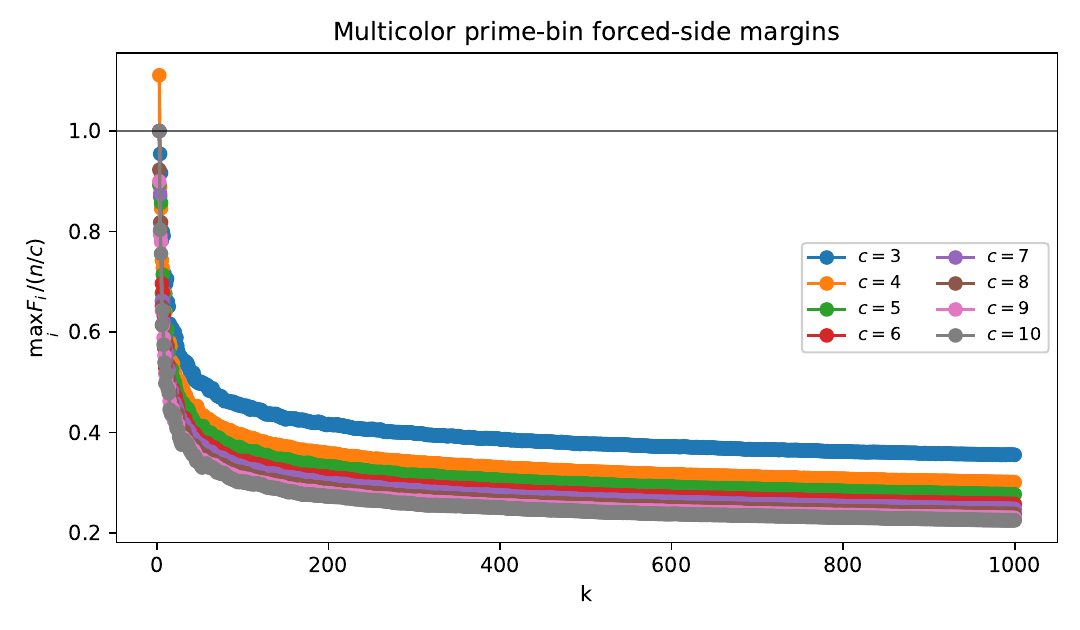}
\caption{For the start-$0$ round-robin certificate strategy in each color
count, the forced-side ratio drops below the balanced target after a small
initial range.  Feasibility is then certified by a max-flow assignment of
flexible vertices to color capacities.}
\label{fig:balanced-multicolor-margins}
\end{figure}

\subsection{Supplementary Boundary Diagnostics}
\label{app:boundary-diagnostics}

\begin{table}[htbp]
\centering
\caption{Small multicolor balanced endpoint defect map.  The ``exact''
columns record only cases where an exact MILP or SAT backend returned a
definitive answer; ``unknown'' means time or clique-enumeration limits were
reached and is not evidence of infeasibility, while ``--'' means no exact
case of that type is reported in the table.  The last column is the
independent certificate-family onset from the max-flow round-robin scan.}
\label{tab:balanced-multicolor-defect-map}
\small
\begin{tabular}{@{}rrrrr@{}}
\toprule
$c$ & exact no & exact yes & exact unknown & cert. from / tested \\
\midrule
3 & 3 & 4--5 & -- & 6 / 500 \\
4 & 3 & 4 & 5 & 8 / 500 \\
5 & 3 & -- & -- & 15 / 500 \\
6 & 3 & -- & -- & 16 / 500 \\
7 & 3 & -- & -- & 24 / 500 \\
8 & 3 & -- & -- & 28 / 500 \\
9 & 3 & -- & -- & 37 / 500 \\
10 & -- & -- & 3 & 53 / 500 \\
\bottomrule
\end{tabular}

\end{table}

\begin{table}[htbp]
\centering
\caption{Observed onset of the certificate-family regime for multicolor
balanced endpoints.  Each row summarizes max-flow feasibility inside the
round-robin prime-bin family at $n=p_{c(k-1)}-1$.  The rows $3\le c\le20$
were tested through $k=500$, and the rows $21\le c\le30$ through $k=400$.
An all-start comparison for $3\le c\le20$ and $k\le250$ selected the same
start-$0$ onset in every row.}
\label{tab:balanced-multicolor-phase}
\small
\begin{tabular}{@{}rrrrr@{}}
\toprule
colors $c$ & last failure & all-success from & tested through & $k_{\mathrm{on}}/(c\log c)$ \\
\midrule
3 & 5 & 6 & 500 & 1.82 \\
4 & 7 & 8 & 500 & 1.44 \\
5 & 14 & 15 & 500 & 1.86 \\
6 & 15 & 16 & 500 & 1.49 \\
7 & 23 & 24 & 500 & 1.76 \\
8 & 27 & 28 & 500 & 1.68 \\
9 & 36 & 37 & 500 & 1.87 \\
10 & 52 & 53 & 500 & 2.30 \\
11 & 55 & 56 & 500 & 2.12 \\
12 & 77 & 78 & 500 & 2.62 \\
13 & 91 & 92 & 500 & 2.76 \\
14 & 87 & 88 & 500 & 2.38 \\
15 & 95 & 96 & 500 & 2.36 \\
16 & 134 & 135 & 500 & 3.04 \\
17 & 153 & 154 & 500 & 3.20 \\
18 & 157 & 158 & 500 & 3.04 \\
19 & 182 & 183 & 500 & 3.27 \\
20 & 199 & 200 & 500 & 3.34 \\
21 & 201 & 202 & 400 & 3.16 \\
22 & 227 & 228 & 400 & 3.35 \\
23 & 243 & 244 & 400 & 3.38 \\
24 & 284 & 285 & 400 & 3.74 \\
25 & 334 & 335 & 400 & 4.16 \\
26 & 354 & 355 & 400 & 4.19 \\
27 & 354 & 355 & 400 & 3.99 \\
28 & 373 & 374 & 400 & 4.01 \\
29 & 365 & 366 & 400 & 3.75 \\
30 & 372 & 373 & 400 & 3.66 \\
\bottomrule
\end{tabular}

\end{table}

\begin{table}[htbp]
\centering
\caption{Expanded lower-bound certificate scan for shifted intervals.  A
certificate is an interval-adapted prime-bin coloring of
$\{m+1,\ldots,m+n\}$ using only witness primes present in the interval.}
\label{tab:shifted-lower-bound-summary}
\small
\begin{tabular}{@{}rrrr@{}}
\toprule
$k$ & shifts tested & shifts with certificate & max certified length \\
\midrule
3 & 499 & 0 & -- \\
4 & 499 & 0 & -- \\
5 & 499 & 0 & -- \\
6 & 499 & 0 & -- \\
7 & 499 & 0 & -- \\
\bottomrule
\end{tabular}

\end{table}

\Cref{tab:shifted-exact-summary} gives the complementary exact MILP frontier
for the small shifted instances where the direct formulation is still
tractable.

\begin{table}[htbp]
\centering
\caption{Exact shifted-interval MILP frontier.  Unknown means the instance was
not solved within the per-instance time limit and is not counted as a
threshold.}
\label{tab:shifted-exact-summary}
\small
\begin{tabular}{@{}rrrrr@{}}
\toprule
$k$ & shift range & exact & unknown & not found \\
\midrule
3 & 2--100 & 99 & 0 & 0 \\
4 & 2--100 & 99 & 0 & 0 \\
5 & 2--100 & 99 & 0 & 0 \\
6 & 2--100 & 42 & 57 & 0 \\
\bottomrule
\end{tabular}

\end{table}

\subsection{Exploratory Computation and Retained Diagnostics}
\label{app:exploratory-lessons}

The final proof is independent of exploratory computation.  The diagnostics
retained here explain how the direct search formulation differs from the
certificate proof and record which computational claims were superseded by the
exact theorem.

\subsubsection{Retained diagnostics}

\begin{itemize}
  \item The vertex-coloring and edge-coloring problems are genuinely
  different and must be separated at the definition level.
  \item The early exact values through $k=9$ were consistent with the final
  formula and remain useful sanity checks.
  \item The structural-diagnostics figure is valuable: it shows how a direct
  SAT encoding grows while the true proof certificate stays tiny
  (\Cref{fig:structural-diagnostics}).
  \item The comparison with recent Ramsey breakthroughs is useful when
  stated carefully: this paper does not improve classical Ramsey bounds, but
  it illustrates a sharp contrast between pseudorandom search and arithmetic
  certificates.
  \item The balanced-coloring objection led to a real side result: the exact
  two-color balanced endpoint.
\end{itemize}

\begin{figure}[htbp]
\centering
\includegraphics[width=0.76\linewidth]{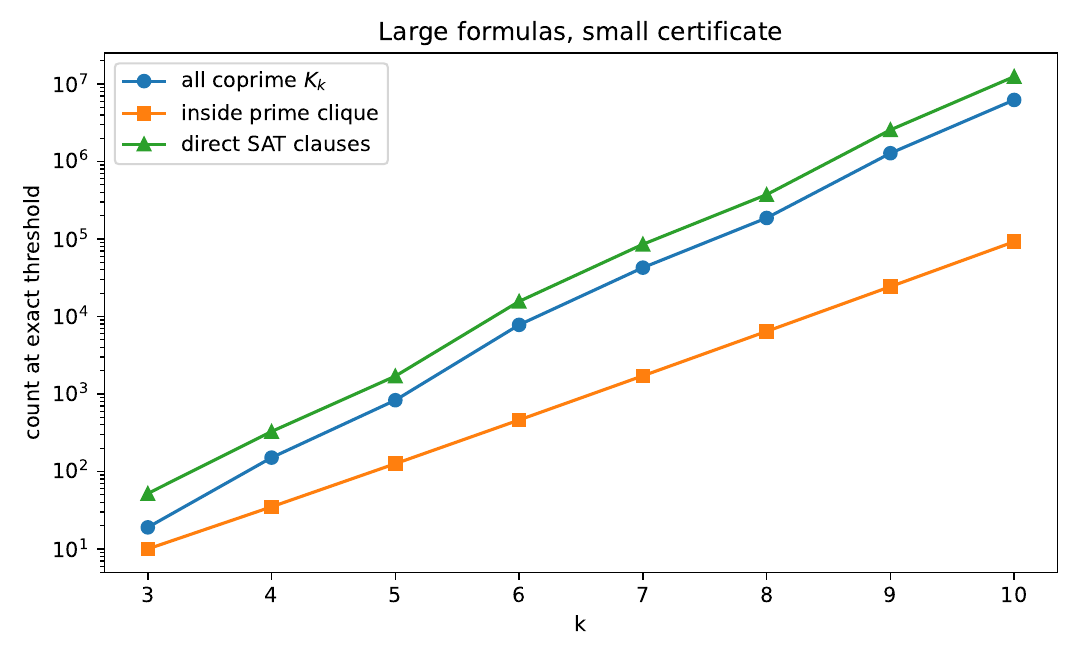}
\caption{Clique and clause counts grow rapidly, but the exact upper-bound
certificate is the single prime clique of size $2k-1$ at
$n=p_{2k-2}$.}
\label{fig:structural-diagnostics}
\end{figure}

\subsubsection{Superseded computational claims}

\begin{itemize}
  \item The claim $R_{\rm cop}(10)=53$ is false for the vertex-coloring
  two-color diagonal problem.
  \item Computing $R_{\rm cop}(11)$ is no longer a meaningful frontier; the
  theorem gives $R_{\rm cop}(11;2)=p_{20}=71$ immediately.
  \item General computational-hardness or fine-grained complexity claims are
  not part of this final theorem unless one changes the input model to
  arbitrary graphs or to edge-coloring targets outside the complete-witness
  transfer.
  \item Spectral and pseudorandomness diagnostics are explanatory, not proof
  ingredients.  The prime clique and divisor injection already determine the
  threshold.
  \item Primality patterns in finite tables should not be presented as
  evidence.  The values are prime because the theorem says they are indexed
  primes.
\end{itemize}

\subsection{Systematic Edge-Coloring Comparison}
\label{app:edge-comparison}

The edge-coloring coprime Ramsey problem is the closest public neighbor, so
it deserves a precise separation.  In the vertex version, a color class is a
set of integers.  To avoid a monochromatic $K_k$, it is enough to ensure
that each color class has pairwise-coprime packing number at most $k-1$.
Prime bins directly control that packing number.

In the edge version, the vertices of a candidate clique need not have a
single color.  Instead, every coprime pair inside the clique has its own
edge color.  A prime-bin assignment of vertices says nothing about whether
all edges among a set of pairwise coprime vertices are red or blue.  This is
why the reported edge-coloring values are already much larger:
\[
  \Redge(3;2)=11,\qquad
  \Redge(3;3)=53,\qquad
  \Redge(4;2)=59,
\]
while the vertex-coloring values are
\[
  \Rcop(3;2)=7,\qquad
  \Rcop(3;3)=13,\qquad
  \Rcop(4;2)=13.
\]

The distinction also explains why the two sets of reported values differ
without making the edge version mysterious.  The prime-bin vertex proof does
not color edges, but the prime-label pullback in \Cref{thm:edge-reduction}
does: it shows that all edge-coprime values are classical Ramsey values
viewed through the map $N\mapsto p_{N-1}$.  Computation can still contribute
to edge-coprime tables, but only to the extent that it contributes to the
underlying classical Ramsey table.

This also clarifies the status of the nearest public computations.  Towell's
repository and accompanying note \cite{towell-blog,towell-github} describe an
online computational project using SAT solvers and AI assistance, and report
the edge-coloring values $11$, $53$, and $59$ using SAT and extension checks.
Those observations use
the same coprime host graph and the same informal phrase ``coprime Ramsey'',
but they do not color vertices and do not contain the mixed partition formula
of Theorem~\ref{thm:main}.  Conversely, \Cref{thm:edge-reduction} gives a
short proof of exactly those clique edge-coloring values from the classical
Ramsey numbers $6$, $17$, and $18$.

\subsection{Why Random-Graph Intuition Fails}
\label{app:randomness}

One tempting but misleading analogy is to compare $G_n$ with an
Erd\H{o}s--R\'enyi graph of the same edge density.  The edge density of the
coprime graph tends to
\[
  \mathbb{P}(\gcd(a,b)=1)=\frac{1}{\zeta(2)}=\frac{6}{\pi^2}.
\]
A random graph $G(n,6/\pi^2)$ has clique number logarithmic in $n$ with high
probability.  The coprime graph has the explicit clique
\[
  \{1\}\cup\{p\le n:p\text{ prime}\},
\]
whose size is $\pi(n)+1\sim n/\log n$.  This is not a small deviation; it is
the structural feature that determines the Ramsey threshold.

This explains a central difference between the present problem and classical
Ramsey lower-bound constructions.  In random or pseudorandom constructions,
large cliques are scattered and controlled by concentration.  In $G_n$, the
dominant clique is named explicitly.  Once that clique is identified, the
upper bound is a pigeonhole argument and the lower bound is the matching
divisor-bin construction.

The exploratory computations included spectral and random-graph diagnostics.
Those diagnostics are still useful as intuition: the universal
vertex $1$, the prime clique, and repeated divisor patterns create strong
non-random signatures.  They should not, however, be used as proof of the
Ramsey value.  The proof is entirely combinatorial and depends only on
prime-divisor injection.

\subsection{Literature Search Notes}
\label{app:literature-search}

The novelty statement in the related-work section is deliberately modest.
It says that, within the checked public sources available through
2026-05-25, we did not find a fully overlapping vertex-coloring formula.  It
does not claim that a result can be proven absent from every private
manuscript or every unindexed web page.

The database comparison used these arXiv application programming interface
(API) queries:
\[
\begin{gathered}
\texttt{all:"coprime Ramsey"},\quad
\texttt{all:coprime AND all:Ramsey},\\
\texttt{all:"coprime graph" AND all:Ramsey},\quad
\texttt{all:"pairwise coprime" AND all:Ramsey},\\
\texttt{all:"vertex-coprime Ramsey"},\quad
\texttt{all:"R\_cop" AND all:Ramsey}.
\end{gathered}
\]
All returned zero arXiv entries.  Repository search found one near-name
project, Towell's computational exploration, which concerns edge colorings.
Semantic paper search returned some raw hits for broad phrases, but after
screening none contained both coprime and Ramsey terminology in a way
relevant to this problem.  \Cref{tab:prior-art} records this public-source
screening summary.

\begin{table}[htbp]
\centering
\caption{Public-source screening summary for exact or near-name overlap.}
\label{tab:prior-art}
\begin{tabular}{lrrr}
\toprule
Source & queries & raw hits & near-name hits \\
\midrule
arXiv API & 6 & 0 & 0 \\
GitHub repository API & 4 & 1 & 1 \\
\bottomrule
\end{tabular}

\end{table}

The closest prior items are therefore:
\begin{itemize}
  \item the $k=2$ chromatic-number observation for the coprime graph;
  \item Towell's edge-coloring coprime Ramsey project;
  \item extremal-set papers on one large subset with no many pairwise
  coprime integers.
\end{itemize}
We did not find any of these sources stating the mixed vertex-coloring formula
of Theorem~\ref{thm:main}.

\subsection{Why the Result Is Complete, and Where It Is Not}
\label{app:scope}

For the vertex-coloring problem on $G_n=[1,n]$, the value frontier is fully
closed:
\[
  \Rcop(k_1,\ldots,k_c)=p_{\sum_i(k_i-1)}
\]
for every number of colors and every demand vector.  Pushing
$\Rcop(k;2)$ to larger $k$ is therefore not a computational research goal;
it is a table-generation exercise.

The meaningful continuations are variants that break one of the two pillars
of the proof.

\paragraph{Edge colorings.}
The prime-bin construction colors vertices, not edges, but the edge variant
is resolved by \Cref{thm:edge-reduction}.  It is not a separate source of
new Ramsey numbers; it is a prime-index transform of the classical table.

\paragraph{Shifted intervals.}
Replacing $[n]$ by $\{m+1,\ldots,m+n\}$ removes vertex $1$ and the initial
prime clique.  The threshold should depend on local prime-factor structure,
not only on $\pi(n)$.

\paragraph{Balanced and density-constrained colorings.}
The balanced two-color diagonal endpoint is now closed by
\Cref{thm:balanced-exact}.  The next density-constrained questions are
multicolor balance, off-diagonal balance, and prescribed non-half densities.

\paragraph{Other arithmetic graphs.}
Graphs defined by common divisors, squarefree kernels, prescribed gcd sets,
or algebraic-integer coprimality may preserve enough divisor structure for
exact Ramsey thresholds while avoiding the complete collapse seen here.

\subsection{Logical Dependency Map}
\label{app:dependency-map}

The proof dependencies are:
\[
\text{prime-divisor injection}
\Longrightarrow
\omega(G_n)=\pi(n)+1
\Longrightarrow
\text{prime-clique upper bound};
\]
\[
\text{prime-bin capacities}
\Longrightarrow
\text{extremal coloring below }p_M
\Longrightarrow
\text{exact mixed formula}.
\]
All computations in the paper are downstream of this theorem.  They serve
three purposes: they reproduce finite values, explain why the original SAT
route appeared difficult, and test robustness variants such as balanced
coloring.  No computational result is used as a premise for
Theorem~\ref{thm:main}.

This dependency map also explains the order of presentation: the prime
clique and matching prime-bin coloring come first, while computation enters
only afterward as diagnostic evidence.

\subsection{Code and Data Availability}
\label{app:code}

The companion repository contains the scripts and generated data used for
the tables and figures in this report.  The source hierarchy separates the
active manuscript and reproducibility scripts from older exploratory files,
which are retained only for provenance and are not sources of final claims.

\end{document}